\numberwithin{equation}{section}
\newcommand{\R}{\mathbb{R}}
\newcommand{\N}{\mathbb{N}}
\newcommand{\eps}{\varepsilon}
\DeclareMathOperator*{\argmin}{arg\,min}
\newcommand{\abs}[1]{\left| #1 \right|}
\newcommand{\pars}[1]{\left( #1 \right)}
\newcommand{\seq}[1]{\pars{#1_n}_{n\in\N}}
\newcommand{\set}[1]{\left\{#1 \right\}}
\newcommand{\inv}[1]{#1^{-1}}
\newcommand{\norm}[1]{\left\Vert #1 \right\Vert }
\renewcommand{\div}{\text{ div}}
\newcommand{\inner}[2]{\left\langle #1, #2 \right\rangle}
\newcommand{\dom}{\text{dom }}
\newcommand{\prox}{\text{prox}}
\definecolor{unibluedark}{RGB}{68, 111, 128}
\definecolor{unibluelight}{RGB}{84, 159, 198}
\def\mathcolor#1#{\@mathcolor{#1}}
\def\@mathcolor#1#2#3{%
  \protect\leavevmode
  \begingroup
    \color#1{#2}#3%
  \endgroup
}
\newcommand{\tv}[1]{\abs{#1}_{\text{TV}}}
\newtheorem{theorem}{Theorem}[section]
\newtheorem{lemma}[theorem]{Lemma}
\newtheorem{corollary}[theorem]{Corollary}
\newtheorem{proposition}[theorem]{Proposition}
\newtheorem{definition}[theorem]{Definition}
\newtheorem{example}[theorem]{Example}
\newtheorem*{remark}{Remark}
\newcommand{\sign}{\mathrm{sign}}
\title{Multiscale hierarchical decomposition methods for ill-posed problems}
\author{Stefan Kindermann\footnote{ Industrial Mathematics Institute, Johannes Kepler University Linz, Austria, email: {\tt kindermann@indmath.uni-linz.ac.at}
}, Elena Resmerita\footnote{Institute of Mathematics, University of Klagenfurt, Austria, email: {\tt elena.resmerita@aau.at} } and Tobias Wolf\footnote{Institute of Mathematics, University of Klagenfurt, Austria, email: {\tt tobias.wolf@aau.at} }}
\newcommand{\xdag}{x^\dagger}
\begin{document}

\maketitle
\begin{abstract}
The Multiscale Hierarchical Decomposition Method (MHDM) was introduced in \cite{VeseMultiscale, VeseDeblurring} as an iterative method for total variation regularization, with the aim of recovering details at various scales from images corrupted by additive or multiplicative noise. Given  its success beyond image restoration, we extend the MHDM iterates in order to solve larger classes of linear ill-posed problems in Banach spaces. Thus, we define the MHDM for  more general convex or even non-convex penalties, and provide  convergence results for the data fidelity term. We also propose a flexible version of the method using adaptive convex functionals for regularization, and show an interesting multiscale decomposition of the data.
This decomposition result is highlighted for the  Bregman iteration method that can be expressed as an adaptive MHDM. Furthermore, we state necessary and sufficient conditions when the MHDM iteration agrees with the variational Tikhonov regularization, which is the case, for instance, for one-dimensional total variation denoising. Finally, we investigate several particular instances and perform numerical experiments that point out the robust behavior of the MHDM.
\end{abstract}
\section{Introduction}
In their influential works from 2004 and 2008, Tadmor, Nezzar and Vese \cite{VeseMultiscale, VeseDeblurring} introduced a multiscale decomposition method for image denoising, deblurring and segmentation, based on the popular total variation model of Rudin, Osher and Fatemi (ROF)   \cite{ROF}. Recall that ROF decomposes an image  $f\in L^2(\Omega)$ in cartoon and texture as $f=u_\lambda+v_\lambda$ such that
\begin{equation}\label{eq:rof}
(u_{\lambda},v_\lambda)=\arg\min_{u+v=f}\left\{\lambda\|v\|_{L^2}^2+\abs{u}_{TV}\right\},
\end{equation}
with $v_{\lambda}\in L^2(\Omega)$ and $u_{\lambda}\in BV(\Omega) =\set{u\in L^2(\Omega): \tv{u} < \infty}$, where $\tv{\cdot}$ denotes the total variation seminorm given by $\abs{u}_{TV}:= \sup\set{\int\limits_\Omega u \div \varphi : \varphi \in C_0^\infty(\Omega)\text{ and } \norm{\varphi}_\infty\le 1}$. Here
$\Omega$ is a bounded and open set in $\R^2$. 
While the main features in natural images are very well restored via ROF, the texture at various scales might not be optimally recovered. The Multiscale Hierarchical Decomposition Method (MHDM) 
copes with this difficulty by decomposing an image into a  sum of multiple images, each of these containing features of the original image at a different scale. Thus, besides  extracting a cartoon representation  of the original image, it allows recovering  more oscillatory image components. One of the main advantages is the relatively simple modeling involved in the procedure. Instead of employing more complicated and numerically expensive penalty terms, the improvement is achieved by \enquote{zooming-in}: What is considered noise and texture at one scale, can be regarded as cartoon at a finer scale. The explicit decomposition of images into parts that contain increasingly more subtle features has made the method attractive  to solve plenty of other problems. Examples can be found in the fields of nonlinear partial differential equations \cite{tadmor2012hierarchical},  image registration \cite{Nachman, ImageRegistration},  graph theory \cite{MultiscaleGraphs}, compressed sensing, deconvolution of the Helmholtz filter and linear regression - see the PhD thesis \cite{zhong}.
 Note that the latter concerns actually a general MHDM, but for solving  linear inverse problems in finite dimension. Moreover, the approach  in \cite{Nachman}  corresponds to a wider range of possible applications apart from image registration since it employs nonlinear operators, not necessarily quadratic data fidelities, and powers of seminorms as penalties.

In the sequel, we formulate  the Multiscale Hierarchical Decomposition Method in Banach spaces and present the state of the art to the best of our knowledge.

 Let $X$ be a Banach space and $J:X \to [0,\infty]$ be a proper and lower semicontinuous functional which is bounded from below. Let $T \in \mathcal{L}(X,H)$ be an ill-posed linear operator with values in a Hilbert space $H$, and  fix $f\in H$. Generally, we are interested in solving \begin{equation}\label{eq:J_minimizing_sol}
    \begin{cases}
    \text{minimize  } &J(x)\\
    s.t.  &Tx = f .
    \end{cases}
\end{equation}
We will furthermore assume that problem \eqref{eq:J_minimizing_sol} is non-degenerate, that is, there exists an element $\xdag \in \dom J:= \set{x \in X: J(x)<\infty}$ such that \begin{equation}\label{eq:xdagger}
    T\xdag = f.
\end{equation}
Throughout this work, we assume that the generalized Tikhonov functional
\begin{equation}\label{eq:Tikhonov_functional}
    F_\lambda(u) := \frac{\lambda}{2}\norm{Tu-y}^2 + J(u) 
\end{equation} admits a minimizer for all $\lambda>0$ and $y \in H$. Standard conditions when this holds true can be found, e.g., in \cite{HoKaPoSc07}.
Moreover, we denote a minimizer of 
the generalized Tikhonov functional 
as $x_\lambda$, i.e., 
\begin{equation}\label{eq:tikhonov}
x_\lambda := \argmin_{u \in X} F_\lambda (u) = 
 \argmin_{u \in X} \frac{\lambda}{2}\norm{Tu-y}^2 + J(u).
\end{equation}

We approach \eqref{eq:J_minimizing_sol} by the Multiscale Hierarchical Decomposition Method (MHDM) that  works as follows: Choose a sequence $(\lambda_n)_{n \in \N_0}$ of positive real numbers and compute \begin{equation}\label{eq:MHDM_initial_step}
    u_0 \in \argmin\limits_{u \in X}\frac{\lambda_0}{2}\norm{Tu-f}^2 + J(u).
\end{equation}
Denote the residual $v_0 = f -Tu_0$ and set $x_0 = u_0$. Next, compute iteratively for $n = 1,\dots$ 
\begin{equation}\label{eq:MHDM_step}
\begin{split}
    u_n &\in \argmin\limits_{u \in X} \frac{\lambda_n}{2}\norm{v_{n-1}-Tu}^2 + J(u),\\
    x_n &= x_{n-1} +u_n\qquad\mbox{and}\\
    v_n &= v_{n-1} -Tu_n = f-Tx_n.
    \end{split}
\end{equation}
The resulting sequence $(x_n)_{n \in \N_0}$ with $x_n = \sum_{i=1}^n u_i$ is considered as an  approximation of $x^\dagger$, thus yielding a scale decomposition
depending on the choice of 
$J$ and $\lambda_n$.

Note that  \eqref{eq:MHDM_step} can be rewritten as
\begin{eqnarray}\label{eq:MHDM_step_1}
    x_n \in \argmin\limits_{x \in X} \frac{\lambda_n}{2}\norm{Tx-f}^2 + J(x-x_{n-1}).
\end{eqnarray}
If  $X$ is a Hilbert space and $J=\frac{\|\cdot\|^2}{2}$, this procedure reads
\begin{eqnarray}\label{eq:nonstationary_tikhonov}
    x_n \in \argmin\limits_{x \in X} \lambda_n\norm{Tx-f}^2 + \|x-x_{n-1}\|^2,
\end{eqnarray}
which is the nonstationary Tikhonov regularization---see \cite{groetsch, HankeGroesch} for detailed convergence and error estimate results, as well as \cite{ScherzerInvScale} concerning the inverse scale space method as an asymptotic formulation of the method. In fact, one recognizes in \eqref{eq:nonstationary_tikhonov} the particular quadratic setting for another prominent approach of inverse problems,  namely  the nonstationary augmented 
Lagrangian  method \cite{frick_scherzer} known also as  the Bregman iteration \cite{orig_breg}. 
Note, however, that these methods usually differ from the MHDM method for non-quadratic 
penalties $J$.
Before describing the latter, let us recall some definitions from convex analysis. If the functional $J$ is convex, one defines the subgradient of $J$ at a point $x_0\in \dom J$ as \begin{equation}\label{eq:subgradient}
    \partial J(x_0) = \set{x^* \in X^* : \inner{x^*}{x-x_0}\le J(x) -J(x_0) \text{ for all } x \in X},
\end{equation}
where $X^*$ stands for the dual space of $X$. Furthermore, for any point $x_1 \in \dom J$, the Bregman distance of $x_0$ and $x_1$ with respect to $x^* \in \partial J(x_0)$ is denoted by \begin{equation}\label{eq:Bregman_Distance}
    D_J^{x^*}(x_1,x_0) = J(x_1) -J(x_0) -\inner{x^*}{x_1-x_0}.
\end{equation}
Now we are in a position to recall the Bregman iteration:  For some sequence $(\lambda_n)\subset (0,\infty)$ and for any $n\in\N$, let
\begin{equation}\label{eq:bregman} 
 x_n \in \argmin_{x\in X} \frac{\lambda_n}{2}  \|T x -f\|^2 +
D_J^{p_{n-1}}(x,x_{n-1}), 
\end{equation}
with $x_0=0$ and $p_0=0$, where in each step one chooses
\[ p_{n} = \lambda_nT^*(f-Tx_n) +p_{n-1} \in \partial J(x_{n}).\] 
Here $T^*: H \to X^*$ stands for the adjoint of the operator $T$.
While for  the related method \eqref{eq:bregman} comprehensive convergence results exist in the corresponding references mentioned above, the situation is different for the MHDM defined with a non-quadratic penalty $J$ in infinite-dimensional spaces. Intriguingly,  there is no convergence result  for the sequence of iterates $(x_n)$ apart from  the denoising case  (when $T$ is the identity), which is a consequence of the weak/strong convergence  for  the residual $(Tx_n-f)$ (cf. \cite{VeseMultiscale, VeseDeblurring, Nachman}) or of the residual error estimates in \cite{MultiscaleRefinementImaging}. As regards the penalty $J$,  this was assumed so far to be a (power of a) seminorm. 

Another interesting open question was raised in \cite{VeseMultiscale}, whether the  MHDM iterate $x_n$  coincides with the solution $x_{\lambda_n}$ of Tikhonov regularization \eqref{eq:tikhonov} corresponding to the parameter $\lambda_n$. %

In general, this is not the case. However, there are situations when the answer is positive, as can be seen in section \ref{sect:ComparisonTikhonov}.

Thus, the main contributions of this study are as follows. First of all, we extend the existing convergence results regarding  the residual to the case when the penalty is a more general convex function than a (power of a) seminorm or when it belongs to a class of nonconvex functions. Furthermore,  we propose a generalization of MHDM by empowering the penalty to be  adaptive, and point out a couple of specific penalties that yield well known methods for solving \eqref{eq:J_minimizing_sol}. For instance, as a  side result that is interesting in itself, we formulate the Bregman iteration \eqref{eq:bregman} as a generalized MHDM with appropriate adaptive penalties, and obtain a curious multiscale norm decomposition of the data in terms of $(Tu_n)$ and symmetric Bregman distances $D_J^\text{sym}(x_n,x_{n-1})$. Since the topic of generalized MHDM with   new, meaningful, adaptive penalties is quite challenging, we will consider  it for future research. Moreover, we state necessary and sufficient conditions for the MHDM to agree with the Tikhonov regularization. We verify these conditions for one-dimensional TV-denoising, as well as  for particular situations in two-dimensional TV-denoising and for  finite-dimensional $\ell^1$-regularization. In particular, we emphasize that the  so-called positive cone condition \cite{Efron04} for the operator $T$ in
the $\ell^1$-regularization case does allow one to compare  the  MHDM iterate $x_n$ to  the solution  $x_{\lambda_n}$ of  Tikhonov regularization \eqref{eq:tikhonov} corresponding to the  regularization parameter $\lambda_n$ used in the MHDM.
Last but not least, we focus on several examples to understand how MHDM performs theoretically and computationally. The numerical experiments show a robust behavior of the MHDM with respect to  the involved parameters. Moreover, they give us hope that, under suitable assumptions which need to be found, the Multiscale Hierarchical Decomposition Method does converge in a more general framework, that is when $T\neq Id$, where $Id$ denotes the identity operator.

The structure of this work is as follows. Section \ref{sect:Convergence} presents the convergence of the residual in the case of some convex or even nonconvex penalties. Thereafter, we suggest in Section \ref{sect:extensions} a generalization of the iterative method and derive a decomposition result of the data. The comparison of the MHDM and the generalized Tikhonov regularization can be found in Section \ref{sect:ComparisonTikhonov}, while  Section \ref{sect:Numerical} presents the numerical experiments.

\section{Convergence of the residual}\label{sect:Convergence}

As mentioned in the introduction, no general result regarding the convergence of the MHDM iterates has been shown  when the problem $Tx = f$ is ill-posed. However, if the problem is well-posed, that is, $T$ is a bijective linear operator with continuous inverse, convergence of the iterates  $(x_n)$ is a consequence of the convergence of the residual. For the setting when $J$ is a power of a seminorm, it was shown in  \cite{Nachman} that the (not necessarily quadratic)  residual converges. Nonetheless, by adding the  term $J(x)$ to \eqref{eq:MHDM_step_1}, in \cite{Nachman} it is proved that the iterates 
$(x_n)$ of the resulting  {\em tight MHDM} 
(cf.~\eqref{eq:tight_MHDM} below) converge on subsequences to  a solution of \eqref{eq:J_minimizing_sol}. Similar  results  were derived in \cite{MultiscaleRefinementImaging} for the iterates of a    refined version of the tight MHDM that uses two different penalties, namely  $J$ for the components $u_n$ and $R$ for the sum $x_n$ of the components $u_n$. Note that \cite{MultiscaleRefinementImaging} established also error estimates for the residual of  the MHDM and of its tighter versions.

In this section, we extend the latter result  under the assumption of a generalized triangle inequality on the penalty $J$. This will lead to convergence rates of the residual for  large classes of penalty functions, including certain nonconvex functions.  Additionally, we establish a weak convergence result for the residual in a complementary case of general convex functions (which do not necessarily satisfy  a generalized triangle inequality).

 \subsection{The case of exact data}
We start by showing the results under the assumption of exact data $f$.
\begin{theorem}\label{thm:Convergence_results} Let $(x_n)_{n \in \N_0}$ be the sequence generated by \eqref{eq:MHDM_initial_step}--\eqref{eq:MHDM_step}, and let
 \eqref{eq:xdagger} hold.
\begin{enumerate}[(i)]
\item  Assume that $J$ is minimal at $0$ and that there is $C \ge 1$ such that  \begin{equation}\label{eq:Generalized_triangle_inequality}
        J(x-y) \le C (J(x) +J(y))
    \end{equation} for all $x,y \in X$. 
If $\lambda_n$ is chosen such that $2C \lambda_{n-1} \le \lambda_n$ for all $n\in \N$, then the residual satisfies \begin{equation}\label{eq:Convergence_rate_normpowers}
    \norm{f -Tx_n} \le \pars{4C\frac{J(x^\dagger)}{\lambda_0(n+1)}}^\frac{1}{2}
\end{equation}
for all $n \in \N_0$.
\item  Assume $J$ is convex and $\dom J$ is dense in $X$. Moreover, let one of the following conditions hold. \begin{enumerate}[a)]
\item If $J$ is minimal at $0$, then the residuals are 
monotonically decreasing.
\item If $J(0)<\infty$ and $\lambda_n$ is chosen such that \begin{equation}\label{eq:Series_lambdas_finite}
     \sum_{n = 0}^\infty \frac{1}{\lambda_n} <\infty,
 \end{equation}
 then the residuals 
 satisfy
 \[ \|f -T x_n\|^2  \leq  2 \tilde C \sum_{j=0}^n \frac{1}{\lambda_n}, \]
 where $\tilde C = J(0) -\inf\limits_{x\in X} J(x)$. 
\end{enumerate}
In both cases  (a) and (b),  $((f-Tx_n))_{n \in \N_0}$ is bounded, and every weak limit point is in the kernel of $T^*$. In particular, this means that $T^*(f-Tx_n)$ converges to $0$ in the weak-*-topology of $X^*$.
\end{enumerate}
\begin{proof}
\begin{enumerate}[(i)]

\item Let $n \in \N$. By the optimality of $u_n$ defined in \eqref{eq:MHDM_initial_step} and $\eqref{eq:MHDM_step}$  by comparing to $u = 0$ it holds
\begin{equation}\label{eq:Compare_to_0}
    \frac{\lambda_n}{2} \norm{f-Tx_n}^2 + J(u_n) \le \frac{\lambda_n}{2}\norm{f-Tx_{n-1}}^2+J(0).
\end{equation}
On the other hand, comparing to $u = x^\dagger-x_{n-1}$, we obtain \begin{equation}\label{eq:Compare_to_error}
    \frac{\lambda_n}{2} \norm{f-Tx_n}^2 + J(u_n) \le J(x^\dagger -x_{n-1}).
\end{equation}
In particular, the minimality of $J$ at $0$ and \eqref{eq:Compare_to_0} imply that $\norm{f-Tx_n}$ is decreasing. Using \eqref{eq:Generalized_triangle_inequality},  it holds for $k \in \N$, 
\begin{align*}
    \frac{\lambda_k}{2}\norm{f-Tx_k}^2 + J(x^\dagger-x_k)  &= \frac{\lambda_k}{2}\norm{f-Tx_k}^2 + J(x^\dagger-x_{k-1} -u_k) \\&\le \frac{\lambda_k}{2}\norm{f-Tx_k}^2 + C\pars{J(x^\dagger-x_{k-1}) + J(u_k)}\\&=\frac{\lambda_k}{2}\norm{f-Tx_k}^2 + J(u_k) + (C-1) J(u_k) + CJ(x^\dagger-x_{k-1}) \\&\overset{\eqref{eq:Compare_to_error}}{\le} (C-1) J(u_k) + (C+1) J(x^\dagger-x_{k-1})\\&\le 2CJ(x^\dagger-x_{k-1}),
    \end{align*}
where the last inequality follows from \eqref{eq:Compare_to_error} and $\frac{\lambda_k}{2}\norm{f-Tx_k}^2\ge 0$. By using this and the choice of $\lambda_k$, we can conclude\begin{equation}\label{eq:Recursion_estimate_powers_of_norms}
    \frac{1}{2}\norm{f-Tx_k}^2 + \frac{J(x^\dagger-x_k)}{\lambda_k} \le \frac{2C}{\lambda_k}J(x^\dagger-x_{k-1}) \le \frac{1}{\lambda_{k-1}}J(x^\dagger-x_{k-1}).
\end{equation} Thus, we can repeatedly use \eqref{eq:Recursion_estimate_powers_of_norms} for $k=0,\dots,n$, sum up and obtain\begin{equation*}
     (n+1)\frac{\norm{f-Tx_n}^2}{2} + \frac{J(x^\dagger-x_n)}{\lambda_n} \le \sum\limits_{k = 0}^n\left(\frac{\norm{f-Tx_k}^2}{2}\right) + \frac{J(x^\dagger-x_n)}{\lambda_n}\le \frac{2C}{\lambda_0}J(x^\dagger),
\end{equation*}
where the first inequality is implied by the monotonicity of the residual.

This yields \eqref{eq:Convergence_rate_normpowers}.

\item Let $n \in \N$. Then by  the optimality of $u_n$, we obtain \begin{equation}\label{eq:Compare_to_0_convex}
    \frac{\lambda_n}{2} \norm{f-Tx_n}^2 + J(u_n) \le \frac{\lambda_n}{2}\norm{f-Tx_{n-1}}^2 + J(0).
\end{equation}
This means $\norm{f-Tx_n}^2 \le \norm{f-Tx_{n-1}}^2 + 2\frac{J(0)-J(u_n)}{\lambda_n}$. If now $J(0)$ is minimal, this means that $\norm{f-Tx_n}$ is decreasing. Otherwise, we inductively arrive at \begin{equation*}
    \norm{f-Tx_n}^2 \le \norm{f}^2 + 2\sum_{k = 0}^n\frac{J(0)-J(u_k)}{\lambda_k}\le \norm{f}^2 + 2\sum_{k = 0}^n\frac{J(0)}{\lambda_k}.
\end{equation*}
By \eqref{eq:Series_lambdas_finite}, this implies that in both cases the sequence $(f-Tx_n)$ is bounded. Therefore, it admits a weakly convergent subsequence $(f-Tx_{n_k})$. Let $v$ be its limit. Note that by the optimality condition of $u_n$ and due to the convexity of $J$, it holds \begin{equation*}
\lambda_nT^*(f-Tx_n) \in \partial J(u_n).
\end{equation*}
This means \begin{align}\label{eq:Upper_bound_of_inner_product}
    \inner{T^*(f-Tx_n)}{z}_X&\le\frac{J(z)-J(u_n)}{\lambda_n} +\inner{f-Tx_n}{Tu_n}_H \nonumber\\&\le \frac{J(z)}{\lambda_n} + \norm{f-Tx_n}\norm{T(x_n-x_{n-1})} \nonumber\\&\le \frac{J(z)}{\lambda_n}+K,
\end{align}
 for some $K \ge 0$, because $\norm{f-Tx_n}$ and therefore $\norm{Tx_n}$ are bounded. Considering now \eqref{eq:Upper_bound_of_inner_product} with $x_n$ replaced by the subsequence $x_{n_k}$ and letting $k \to \infty$ yield \begin{equation}\label{eq:Contradiction_inner}
    \inner{T^*v}{z}\le K
\end{equation}
for all $z$ with $J(z)<\infty$. Now assume $T^*v \ne 0$. Then, for all $n\in \N$, there is $ z_n$ with $\inner{T^*v}{ z_n}\ge n$. Fix $\eps >0$. By the density of $\dom J$ which is equivalent to the density in the weak topology by Mazur's Lemma \cite[p.~6]{Ekeland}  (as  $\dom J$ is convex), we can find $z_n^\eps$ with $J(z_n^\eps) < \infty$ and $\abs{\inner{T^*v}{ z_n- z_n^\eps}} \le \eps$. Thus $\inner{T^*v}{z_n^\eps} \ge n-\eps$ holds, contradicting  \eqref{eq:Contradiction_inner}. Since this reasoning can be applied to any subsequence, the claim follows.

\end{enumerate}
\end{proof}
\end{theorem}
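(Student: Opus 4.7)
My plan for (i) is to extract two inequalities from the optimality of $u_n$ in \eqref{eq:MHDM_step}: comparing to $u = 0$ and using minimality of $J$ at $0$ produces monotonicity of the residual $\|f - Tx_n\|$; comparing to the feasible point $u = \xdag - x_{n-1}$ yields the error bound $\tfrac{\lambda_n}{2}\|f-Tx_n\|^2 + J(u_n) \le J(\xdag - x_{n-1})$. To close the recursion I would apply the generalized triangle inequality to $J(\xdag - x_n) = J((\xdag - x_{n-1}) - u_n) \le C(J(\xdag - x_{n-1}) + J(u_n))$, substitute the error bound to absorb $J(u_n)$, and rearrange to obtain
\[
\frac{1}{2}\|f - Tx_n\|^2 + \frac{J(\xdag - x_n)}{\lambda_n} \le \frac{2C}{\lambda_n}\, J(\xdag - x_{n-1}).
\]
The growth condition $2C\lambda_{n-1} \le \lambda_n$ turns the right-hand side into $J(\xdag - x_{n-1})/\lambda_{n-1}$, so the quantity $\tfrac{1}{2}\|f-Tx_k\|^2 + J(\xdag - x_k)/\lambda_k$ telescopes. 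Summing from $k = 0$ to $n$, using monotonicity of the residual to bound $\sum_{k=0}^n \|f-Tx_k\|^2$ from below by $(n+1)\|f-Tx_n\|^2$, and discarding the nonnegative penalty term yields the rate \eqref{eq:Convergence_rate_normpowers}.

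For (ii)(a), the same comparison $u = 0$ gives $\|f-Tx_n\|^2 \le \|f - Tx_{n-1}\|^2 + \tfrac{2}{\lambda_n}(J(0) - J(u_n))$; minimality of $J$ at $0$ makes the correction nonpositive and forces monotone decrease. For (b), I would iterate this same inequality from $\|f - Tx_{-1}\|^2 = \|f\|^2$ (with the convention $x_{-1} := 0$) and use the uniform bound $J(0) - J(u_n) \le J(0) - \inf J =: \tilde C$ together with \eqref{eq:Series_lambdas_finite} to produce the summability estimate and uniform boundedness of $(f - Tx_n)$ in $H$.

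The heart of the theorem is the weak-limit assertion. I would extract a weakly convergent subsequence $(f - Tx_{n_k})$ with limit $v \in H$ and invoke the first-order optimality $\lambda_n T^*(f - Tx_n) \in \partial J(u_n)$. The subgradient inequality at any $z \in \dom J$ reads
\[
\inner{T^*(f-Tx_n)}{z} \le \frac{J(z) - J(u_n)}{\lambda_n} + \inner{f-Tx_n}{Tu_n},
\]
and I would discard $-J(u_n)/\lambda_n$ via $J \ge 0$ and absorb the second term into a constant $K$ using boundedness of $\|f-Tx_n\|$ together with $\|Tu_n\| \le \|f-Tx_{n-1}\| + \|f-Tx_n\|$. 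Passing to the limit along $n_k$, with $\lambda_{n_k} \to \infty$ (forced by \eqref{eq:Series_lambdas_finite} in case (b), and an auxiliary growth in case (a)), yields $\inner{T^*v}{z} \le K$ uniformly on $\dom J$. Assuming $T^*v \neq 0$, one selects $z_n \in X$ with $\inner{T^*v}{z_n} \ge n$ and approximates $z_n$ by $z_n^\varepsilon \in \dom J$ using the strong density of $\dom J$ (equivalent to weak density on the convex set $\dom J$ via Mazur's lemma), yielding $\inner{T^*v}{z_n^\varepsilon} \ge n - \varepsilon$ and contradicting the uniform bound. Since this applies to every weak cluster point, $T^*(f - Tx_n) \to 0$ weak-$*$ in $X^*$. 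The main obstacle I anticipate is this final density/scaling step, which is delicate because the subgradient estimate is only available on $\dom J$ while the contradiction requires test elements of arbitrarily large pairing with $T^*v$; bridging the gap cleanly is the role of Mazur's lemma.
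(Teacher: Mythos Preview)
Your proposal is correct and follows essentially the same route as the paper's proof: the two comparison inequalities (with $u=0$ and $u=\xdag-x_{n-1}$), the use of \eqref{eq:Generalized_triangle_inequality} to derive the telescoping recursion, the monotonicity bound to replace $\sum_k \|f-Tx_k\|^2$ by $(n+1)\|f-Tx_n\|^2$, and in part (ii) the subgradient inequality combined with density of $\dom J$ via Mazur's lemma are exactly the steps in the paper. You are even slightly more careful than the paper in flagging that the passage to the limit $\inner{T^*v}{z}\le K$ implicitly uses $\lambda_{n_k}\to\infty$.
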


\begin{remark} \ 

\begin{enumerate}
    \item  If $J(x) = \abs{x}_{se}^p$ for a seminorm $\abs{\cdot}_{se}$ and $p\in (0,\infty)$, then condition \eqref{eq:Generalized_triangle_inequality} holds for $C = 2^{\max{\set{0,\,p-1}}}$. Furthermore, if $\seq{p}$ is a bounded sequence in $(0,\infty)$, then $J(x) = \sum\limits_{n = 1}^\infty \abs{x_n}^{p_n}$ satisfies \eqref{eq:Generalized_triangle_inequality} with $C = 2^{\max{\set{0,\,\sup p_n-1}}}$. The reader is referred to \cite{lor_res} for promoting sparsity by employing the latter functional $J$.
    \item In the situation of part $(i)$ in the previous theorem, strong convergence of the residual is also obtained if $(\lambda_n)_{n\in \N_0}$ is an arbitrary sequence increasing to $\infty$ (a proof is given for a more general version of the algorithm in Lemma \ref{lemma:Convergence_Residual_Generalization}). However, we do not obtain convergence rates without additionally assuming the rate of increase to be at least geometric.
    \item Estimate \eqref{eq:Convergence_rate_normpowers} also holds for general distance functions: Let $d:H\times H \to [0,\infty)$ be a function such that $d(x,x) = 0$ for all $x \in X$. If we replace the Hilbert space norm in \ref{eq:MHDM_initial_step} and \eqref{eq:MHDM_step} by $d$, i.e., we  consider the iteration \begin{equation*}
        u_n \in \argmin\limits_{u \in X} \lambda_n d(Tu, v_{n-1}) + J(u),
    \end{equation*}
    then, under the same assumptions as in part $(i)$ of \ref{thm:Convergence_results}, the estimate \begin{equation}\label{eq:COnbvergence_residual_general_penalty}
        d(Tx_n,f) -\inf\limits_{x \in X}d(Tx,f) \le 2C\frac{J(x^\dagger)}{\lambda_0(n+1)}
    \end{equation} holds. This means, we have found convergence rates of the residual for all penalties considered in 
    Section~2 of \cite{Nachman}. Furthermore, by making minor adaptions in the notation of the proof, \eqref{eq:COnbvergence_residual_general_penalty} also holds for nonlinear operators $T$. However, for nonconvex data fidelity terms or nonlinear operators, computing global minimizers of the corresponding Tikhonov functionals might be impossible or too expensive for practical applications. In those cases, one might consider alternative ways to regularize the ill-posed problem.
\end{enumerate}
\end{remark}

\subsection{The noisy data case}
Assume now that instead of exact data $f$, we are given a noisy measurement $f^\delta$. Typically, regularization methods behave semi-convergent in this situation. This means that the true solution is approached initially, but then the distance between the iterates and the true solution eventually increases. To overcome this issue, the iteration is terminated early according to some meaningful rule. We will use the discrepancy principle as a stopping criterion and show convergence of the residual as the noise level $\delta$ approaches $0$.

Let us assume the case of additive Gaussian noise, i.e. \begin{equation}\label{eq:noise}
    \norm{f^\delta -f} \le \delta
\end{equation} with some $\delta >0$. For $n \in \N$, denote by $x_n^\delta = \sum\limits_{j = 0}^n u_j^\delta$ the iterate of the MHDM with data $f^\delta$ instead of $f$. 
\begin{lemma}\label{lemma:Convergence_for_noise}
Assume that $f$ and $f^\delta$ verify \eqref{eq:noise}. If $J$ satisfies \eqref{eq:Generalized_triangle_inequality} and the sequence $(\lambda_n)_{n \in \N_0}$ is chosen such that $2C\lambda_{n-1}\le \lambda_n$, then the following holds, for all $n \in \N$: 
\begin{equation}\label{eq:Residual_rate_noisy}
    \norm{Tx_n^\delta-f^\delta}^2 \le 4C\frac{J(x^\dagger)}{\lambda_0(n+1)} + \delta^2.
\end{equation}

\begin{proof}
Let $n \in \N$. By the optimality of $u_n^\delta$ for \eqref{eq:MHDM_step} with $f$ replaced by $f^\delta$, it is \begin{align*}
    \frac{\lambda_n}{2}\norm{f^\delta -Tx_n^\delta}^2 + J(u_n^\delta) \le \frac{\lambda_n}{2}\norm{f^\delta - f}^2 + J(x^\dagger - x_{n-1}^\delta) \le \frac{\lambda_n}{2}\delta^2 + J(x^\dagger-x_{n-1}^\delta).
\end{align*}
By using analogous reasoning as in the proof of Theorem \ref{thm:Convergence_results} (i), the claim follows.
\end{proof}
\end{lemma}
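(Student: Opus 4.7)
The plan is to mirror the argument for part~(i) of Theorem~\ref{thm:Convergence_results}, substituting $f^\delta$ for $f$ throughout and carefully tracking the additional $\delta$-contributions. The proof above already establishes the noisy analog of the key inequality \eqref{eq:Compare_to_error},
\[
\tfrac{\lambda_n}{2}\norm{Tx_n^\delta - f^\delta}^2 + J(u_n^\delta) \le \tfrac{\lambda_n}{2}\delta^2 + J(x^\dagger - x_{n-1}^\delta),
\]
obtained by comparing the optimality of $u_n^\delta$ to the test point $u = x^\dagger - x_{n-1}^\delta$ and using $\norm{Tx^\dagger - f^\delta} = \norm{f - f^\delta} \le \delta$. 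Comparison to $u=0$ yields, as in the noise-free case, the monotonicity of $\norm{Tx_n^\delta - f^\delta}$ in $n$.

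From here I would run exactly the chain of estimates used in the derivation of \eqref{eq:Recursion_estimate_powers_of_norms}: apply \eqref{eq:Generalized_triangle_inequality} to $J(x^\dagger - x_n^\delta) = J((x^\dagger - x_{n-1}^\delta) - u_n^\delta)$, substitute the displayed bound on $J(u_n^\delta)$, and collect terms. The single extra $\tfrac{\lambda_n}{2}\delta^2$ propagates once through the argument and produces
\[
\tfrac{C\lambda_n}{2}\norm{Tx_n^\delta - f^\delta}^2 + J(x^\dagger - x_n^\delta) \le \tfrac{C\lambda_n}{2}\delta^2 + 2C\, J(x^\dagger - x_{n-1}^\delta).
\]
Dividing by $C\lambda_n$ and exploiting $2C\lambda_{n-1}\le\lambda_n$ converts this into the clean telescoping recursion $b_n + \tfrac{1}{2}\norm{Tx_n^\delta - f^\delta}^2 \le b_{n-1} + \tfrac{\delta^2}{2}$ with $b_n := J(x^\dagger - x_n^\delta)/(C\lambda_n) \ge 0$. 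An analogous treatment of the initial step (formally $x_{-1}^\delta := 0$, compare to $u = x^\dagger$) supplies the base bound $b_0 + \tfrac{1}{2}\norm{Tx_0^\delta - f^\delta}^2 \le \tfrac{\delta^2}{2} + \tfrac{2J(x^\dagger)}{\lambda_0}$.

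Summing from $k = 0$ to $n$, dropping the nonnegative $b_n$, and invoking the monotonicity of the residuals to bound $\sum_{k=0}^n \tfrac{1}{2}\norm{Tx_k^\delta - f^\delta}^2$ from below by $(n+1)\tfrac{1}{2}\norm{Tx_n^\delta - f^\delta}^2$ then yields \eqref{eq:Residual_rate_noisy}. The only point I expect to require some care is the bookkeeping of the noise: each step contributes a $\tfrac{\delta^2}{2}$, so the accumulated noise after telescoping is $(n+1)\tfrac{\delta^2}{2}$, which after division by $(n+1)/2$ from the monotonicity step gives exactly the additive $\delta^2$ claimed in \eqref{eq:Residual_rate_noisy} rather than a term growing like $n\delta^2$. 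This precise cancellation is the reason the base case $k = 0$ must be included in the telescoping sum; the rest is a routine adaptation of the noise-free computation.
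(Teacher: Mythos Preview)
Your proposal is correct and follows essentially the same approach as the paper: the paper's proof merely records the noisy analogue of the comparison inequality and then defers to ``analogous reasoning as in the proof of Theorem~\ref{thm:Convergence_results}(i)'', which is exactly the telescoping-plus-monotonicity argument you have spelled out in detail. Your normalization by $C\lambda_n$ rather than $\lambda_n$ is a cosmetic variant (it even yields a constant slightly better than the stated $4C$, which is harmless since $C\ge 1$), and your handling of the accumulated $\tfrac{\delta^2}{2}$ terms is precisely what is needed.
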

We consider the following discrepancy principle: Choose some $\tau >1$ and let the index \begin{equation}\label{eq:Discrepancy_principle}
    n^*(\delta) = \max\set{n \in \N: \norm{Tx_n^\delta-f^\delta}^2 \ge \tau \delta^2}.
\end{equation}
Note that $n^*(\delta)$ is well-defined by \eqref{eq:Residual_rate_noisy}.

\begin{theorem}
If under the assumption of Lemma \ref{lemma:Convergence_for_noise}, the iteration is stopped at index $n^*(\delta) +1$, then $Tx_{n^*(\delta)+1}^\delta\to f$ as $\delta \to 0$.
\begin{proof}
By \eqref{eq:Discrepancy_principle}, it is \begin{equation*}
    \norm{Tx_{n^*(\delta)+1}^\delta -f^\delta} \le \tau^\frac{1}{2}\delta.
\end{equation*}
Therefore, the estimate \begin{equation*}
    \norm{Tx_{n^*(\delta)+1}^\delta -f} \le \norm{Tx_{n^*(\delta)+1}^\delta -f^\delta} + \norm{f-f^\delta} \le \tau^\frac{1}{2}\delta + \delta
\end{equation*}
yields the result.
\end{proof}
\end{theorem}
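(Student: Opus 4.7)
The plan is to exploit the definition of the stopping index $n^*(\delta)$ directly, since it immediately controls the noisy residual at the stopping time, and then use a triangle inequality to transfer this bound to the exact data $f$. No deeper arguments about the structure of the iteration should be needed once Lemma \ref{lemma:Convergence_for_noise} is in hand.

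First I would verify that $n^*(\delta)$ is actually well-defined and finite, because otherwise the statement $Tx_{n^*(\delta)+1}^\delta \to f$ is vacuous. This is where Lemma \ref{lemma:Convergence_for_noise} enters: the bound $\|Tx_n^\delta - f^\delta\|^2 \le 4C J(x^\dagger)/(\lambda_0(n+1)) + \delta^2$ shows that the right-hand side falls below $\tau\delta^2$ (with $\tau > 1$) for all sufficiently large $n$, so the set in \eqref{eq:Discrepancy_principle} is bounded above and the maximum exists.

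Next, by the very definition of $n^*(\delta)$ as a maximum, the index $n^*(\delta)+1$ must fail the defining inequality, giving $\|Tx_{n^*(\delta)+1}^\delta - f^\delta\|^2 < \tau \delta^2$, i.e., $\|Tx_{n^*(\delta)+1}^\delta - f^\delta\| \le \sqrt{\tau}\,\delta$. Combining this with the noise assumption \eqref{eq:noise} via the triangle inequality yields
\begin{equation*}
\|Tx_{n^*(\delta)+1}^\delta - f\| \le \|Tx_{n^*(\delta)+1}^\delta - f^\delta\| + \|f^\delta - f\| \le (\sqrt{\tau}+1)\,\delta,
\end{equation*}
and letting $\delta \to 0$ finishes the argument.

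I do not expect any genuine obstacle here; the only point requiring care is confirming the finiteness of $n^*(\delta)$ from the uniform residual bound of Lemma \ref{lemma:Convergence_for_noise}, which is already noted in the excerpt. The statement is essentially a clean corollary of the discrepancy principle, combining the noise bound with the size of the final residual guaranteed by the stopping rule.
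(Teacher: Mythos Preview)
Your proposal is correct and follows essentially the same approach as the paper: use the definition of $n^*(\delta)$ to bound $\|Tx_{n^*(\delta)+1}^\delta - f^\delta\|$ by $\sqrt{\tau}\,\delta$, then apply the triangle inequality with the noise bound \eqref{eq:noise}. Your additional remark on the finiteness of $n^*(\delta)$ is not in the paper's proof itself but is already noted in the paper right after \eqref{eq:Discrepancy_principle}.
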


\begin{remark}
In the situation of part $(ii)$ in Theorem \ref{thm:Convergence_results}, we also get that $T^*(f^\delta-Tx_n^\delta)$ converges to $0$ in the weak-*-topology. Yet, it is not clear how to define a meaningful stopping index, as there are no convergence rates available.
\end{remark}

\subsection{Maximum entropy regularization}\label{Sec:Max_entropy}
In this section, we give an example where the conditions of Theorem~\ref{thm:Convergence_results} are not satisfied 
and the residual of the MHDM does not converge. 

Let $\Omega \subset \R^n$ be bounded. The negative Boltzman-Shannon entropy is defined as $S:L^1(\Omega) \to \R\cup\set{\infty}$, \begin{equation}\label{eq:entropy}
        S(x) = \begin{cases}
    \int_\Omega x(t) \log x(t) dt &\text{ if $x \ge 0$ a.e. and $x\log x \in L^1(\Omega)$}\\
    \infty &\text{ else.}
    \end{cases}
\end{equation}
Here we use the convention $0\log 0 = 0$. This functional can be employed in regularization methods to enforce non-negativity of approximate solutions - see, e.g., \cite{MaxEntropyRegularization, EnglLandlMaxEntropy, resmerita_anderssen2007}, as well as the survey \cite{cla-kal-res}. For an operator $T \in \mathcal{L}(X,H)$, the variational regularization \eqref{eq:tikhonov} with penalty $J=S$ is well-defined due to the coercivity and lower semicontinuity of the entropy with respect to the weak topology of $L^1(\Omega)$. This functional is of particular interest in our context, because it does not satisfy the conditions of Theorem~\ref{thm:Convergence_results}.

\begin{lemma}
Let $X = L^1(\Omega)$ and let $J = S$. Then $J$ does not satisfy \eqref{eq:Generalized_triangle_inequality}, for any $C \geq 1$.

\begin{proof}
Let $x(t) = y(t) = e^{-1}$ for all $t \in \Omega$. Since $x,y\in \dom S$ and $S(x-y) = 0$, one has
\begin{equation*}
    S(x) + S(y) = -2\int\limits_{\Omega}e^{-1}dt <0.
\end{equation*}
 Thus, \eqref{eq:Generalized_triangle_inequality} can not hold for any $C\ge 1$.

\end{proof}
\end{lemma}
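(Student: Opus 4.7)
The plan is to disprove inequality \eqref{eq:Generalized_triangle_inequality} by exhibiting a single pair $(x,y) \in \dom S \times \dom S$ for which it fails, regardless of the value $C \geq 1$. The cleanest way to do this is to exploit the fact that, unlike a seminorm raised to a power, the negative Boltzmann--Shannon entropy can attain strictly negative values. Once one has an $x\in \dom S$ with $S(x) < 0$, the trivial choice $y = x$ collapses the left-hand side $J(x-y) = S(0) = 0$ (using the convention $0\log 0 = 0$), while the right-hand side $C(S(x)+S(y)) = 2C\,S(x)$ remains strictly negative, producing a contradiction.

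To carry this out, I would first observe that the integrand $t \mapsto t\log t$ is negative on the interval $(0,1)$. Hence any constant function $x \equiv c$ with $c\in(0,1)$ lies in $\dom S$ and satisfies $S(x) = c\,\log(c)\,|\Omega| < 0$. The canonical and sharpest choice is $c = e^{-1}$, since $t = e^{-1}$ is the global minimizer of $t\log t$ on $[0,\infty)$; this is the choice the statement essentially invites.

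Putting it together: take $x(t) = y(t) = e^{-1}$ for all $t \in \Omega$. Then $x,y \in \dom S$, and $S(x-y) = S(0) = 0$, whereas
\[
S(x) + S(y) \;=\; 2\int_{\Omega} e^{-1}\log(e^{-1})\,dt \;=\; -\,2\,e^{-1}\,|\Omega| \;<\; 0.
\]
For \eqref{eq:Generalized_triangle_inequality} to hold at this pair would require $0 \le -2C\,e^{-1}|\Omega|$, which fails for every $C\ge 1$ (indeed for every $C\ge 0$). This yields the claim.

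There is essentially no obstacle here: the entire argument hinges on the observation that $S$ is not nonnegative, so the generalized triangle inequality, which implicitly assumes one can bound a possibly large $J(x-y)$ by a nonnegative right-hand side, fails in the most direct possible way by taking $x=y$. The only thing worth noting in the write-up is the use of the convention $0\log 0 = 0$ to justify $S(0) = 0$, which is already recorded just after the definition \eqref{eq:entropy}.
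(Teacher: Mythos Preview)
Your proposal is correct and follows exactly the same approach as the paper: choose $x = y \equiv e^{-1}$, observe $S(x-y)=S(0)=0$ while $S(x)+S(y)=-2e^{-1}|\Omega|<0$, and conclude that \eqref{eq:Generalized_triangle_inequality} fails for every $C\ge 1$. Your write-up merely adds a bit more motivation (noting that $t\log t$ is minimized at $t=e^{-1}$), but the argument is identical.
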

Since the domain of $S$ is strictly contained in $L^1_+(\Omega) := \set{u \in L^1 : u\ge 0 \text{ a.e.}}$, the density assumption in Theorem~\ref{thm:Convergence_results} does not hold either. Thus, the theorem can not be applied to the MHDM  defined with the entropy penalty. In fact, one can show that the residual obtained in this setting  does not necessarily converge. In order to see this, e.g., for the simple example of the identity operator, we recall the proximal mapping of the negative entropy. Using Table~2 in \cite{CombettesSplitting} and Proposition~12.22 in \cite{BauschkeCombettesConvex} pointwise, one has 
\begin{equation}\label{eq:Proximal_entropy}
    prox_{\frac{S}{\lambda}}(y) = \argmin\limits_{x\in X} \frac{\lambda}{2}\norm{x-y}^2 + S(x) =  \frac{1}{\lambda}W\pars{\lambda\exp\pars{\lambda y -1}}
\end{equation}
where $W$ denotes the principal branch of the Lambert $W$ function.

\begin{lemma}\label{lemma:MHDM_entropy_nonconvergence}
Let $X = H = L^2(\Omega)$ and $T = Id$. Let furthermore $(\lambda_n)_{n \in \N_0}$ be a sequence of increasing positive numbers, and assume that $x^\dagger$ satisfies $0\le x^\dagger< \frac{1}{e}$ on a set $E$ with positive measure. Then the MHDM iterates  with $S$ (restricted to $L^2$)  as penalty term  will be bounded away from $x^\dagger$ in the following sense:\begin{equation*}
    x_n(t) > \dots >x_0(t) > x^\dagger(t), \quad \text{for all } t \in E \text{ and all } n \in \N.
\end{equation*}
\begin{proof}
Since $T  =Id$, it is $u_n = \prox_{\frac{S}{\lambda_n}}(x^\dagger-x_{n-1})$ for $n \in \N_0$ and $x_{-1} = 0$. We can therefore use a one-dimensional calculation.  Let $0 \le x < \frac{1}{e}$ and $\lambda\ge 0$. Multiplying the inequality \begin{equation*}
    \lambda \inv{e} > \lambda x
\end{equation*} with $\exp\pars{\lambda x}$ and applying $W$ (which is strictly increasing and positive on $[0, \infty)$)  yield \begin{equation*}
    W\pars{\lambda \exp(\lambda x - 1}) > W(\lambda x \exp(\lambda x)) = \lambda x,
\end{equation*} or equivalently \begin{equation*}
    \frac{1}{\lambda}W\pars{\lambda\exp\pars{\lambda x -1}} > x.
\end{equation*}
 Thus, because $x^\dagger(t) < \inv{e}$ on $E$,  the previous inequality with $x^\dagger (t)$ instead of $x$ and equation \eqref{eq:Proximal_entropy} 
 imply 
 \begin{equation*}
     x_0(t) =  u_0(t) = \pars{\prox_{\frac{S}{\lambda_0}}(x^\dagger)}(t)  > x^\dagger(t),
\end{equation*}
independently of the choice of $\lambda_0$, for all $t\in E$. Since by \eqref{eq:Proximal_entropy} the increment \begin{equation*}
    x_1-x_0 = u_1 = \prox_{\frac{S}{\lambda_1}}(x^\dagger-x_0)
\end{equation*} is positive, we must have $x_1(t) > x_0(t) > x^\dagger(t)$ for all $t \in E$. The statement now follows by induction.
\end{proof}
\end{lemma}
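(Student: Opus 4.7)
The plan is to exploit that $T = Id$ and $f = x^\dagger$ reduce the MHDM update to a pointwise proximal operation, and then read everything off the explicit Lambert $W$ formula \eqref{eq:Proximal_entropy}. Indeed, the residual is $v_{n-1} = x^\dagger - x_{n-1}$, so $u_n = \prox_{S/\lambda_n}(x^\dagger - x_{n-1})$ and the minimization splits pointwise over $t \in \Omega$. The statement then decomposes into two pieces: a base step $x_0(t) > x^\dagger(t)$ on $E$, and a strict monotonicity step $x_n(t) > x_{n-1}(t)$, which I would in fact prove on all of $\Omega$.

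For the base step, I would establish the scalar inequality: for every $y \in [0, 1/e)$ and $\lambda > 0$, $(1/\lambda)\, W(\lambda e^{\lambda y - 1}) > y$. Starting from $\lambda y < \lambda/e$, multiply by $e^{\lambda y} > 0$ to get $\lambda y\, e^{\lambda y} < \lambda e^{\lambda y - 1}$, then apply the strictly increasing function $W$ to both sides. The left-hand side becomes $\lambda y$ by the defining identity $W(z)e^{W(z)} = z$, so dividing by $\lambda$ yields the desired strict inequality. Applied pointwise on $E$ with $y = x^\dagger(t)$, this gives $x_0(t) = u_0(t) > x^\dagger(t)$ for all $t \in E$, independently of the choice of $\lambda_0$.

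For the monotonicity step, it suffices to observe that $u_n(t) > 0$ pointwise in $\Omega$ for every $n$. This is immediate from \eqref{eq:Proximal_entropy}: the argument $\lambda_n \exp(\lambda_n(x^\dagger(t) - x_{n-1}(t)) - 1)$ is always strictly positive, and $W$ is strictly positive on $(0, \infty)$. Hence $x_n = x_{n-1} + u_n > x_{n-1}$ pointwise on $\Omega$, and a straightforward induction combined with the base step yields the chain $x_n(t) > x_{n-1}(t) > \cdots > x_0(t) > x^\dagger(t)$ on $E$. The main (and essentially only) delicate point is the scalar inequality in the base step; once that is in hand, everything else follows from positivity of $W$ and the pointwise structure of the prox when $T = Id$. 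Intuitively, the mechanism is that $S$ is minimized at $1/e$, so the proximal map always drags values below $1/e$ upward, preventing the MHDM iterates from descending to $x^\dagger$ on $E$.
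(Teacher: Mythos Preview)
Your proposal is correct and follows essentially the same approach as the paper: reduce to the pointwise proximal map via $T=Id$, prove the scalar base-step inequality $(1/\lambda)W(\lambda e^{\lambda y-1})>y$ for $y\in[0,1/e)$ by the same multiply-and-apply-$W$ trick, and then observe that each increment $u_n$ is strictly positive from the Lambert $W$ formula to conclude by induction. The only cosmetic difference is that you explicitly note the monotonicity $u_n>0$ holds on all of $\Omega$, which the paper leaves implicit.
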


The previous Lemma shows that we cannot expect $L^2$-convergence of the residual (which in this case is the same as convergence of $(x_n)$) for the MHDM with entropy penalty, if $x^\dagger<\inv{e}$ on a set of positive measure. In fact, the residual does not converge even if the ground truth is bounded away from $\frac{1}{e}$. Indeed, assume $(x_n)$ does converge to $x^\dagger$. If some iterate $x_n$ satisfies $x_n>x^\dagger$ on a set of positive measure, then \eqref{eq:Proximal_entropy} implies that $x_{n+1} > x_n$ on that set, meaning that $x_n$ cannot converge to $x^\dagger$. Otherwise, if $x_n \le x^\dagger$ a.e. for all $n\in \N_0$, then convergence would yield $0\le x^\dagger-x_{n_0}\le \inv e$  on a set $E$ on positive measure for some $n_0\in \N_0$. Using the same reasoning as in the proof of Lemma \ref{lemma:MHDM_entropy_nonconvergence} but with $x^\dagger-x_{n_0}$ instead of $x^\dagger$, one would obtain $u_{n_0+1} > x^\dagger - x_{n_0}$ on $E$. But this would be equivalent to $x_{n_0+1}> x^\dagger$ on $E$, which contradicts $x_{n_0+1} \le x^\dagger$ a.e.
This negative convergence result for the residual can be explained by the fact that the domain of $S$ is not dense in $L^2(\Omega)$, which consequently does not allow the MHDM to iteratively adjust the approximation with each step. Therefore, any kind of convergence we can hope to achieve will be different from pointwise or (weak) $L^2$ convergence.

\section{Extension of the algorithm with flexible penalty terms}\label{sect:extensions}

The idea of the MHDM as defined in \eqref{eq:MHDM_initial_step} and \eqref{eq:MHDM_step} is to look for solutions 
of \eqref{eq:xdagger} 

that show similar behavior on different scales.  In this section, we present a more flexible version of the MHDM, which aims to recover solutions with different behavior on different scales. To this end, we introduce a scheme with more general penalty terms. Consider a sequence $(J_n)_{n \in \N_0}$ of functionals on $X$ and define a sequence of approximate solutions by computing

\begin{equation}\label{step flexible}
    u_n \in \argmin\limits_{u \in X} \frac{1}{2}\norm{v_{n-1} -Tu}^2+ J_n(u),
\end{equation}
with $x_n = \sum\limits_{k = 0}^n u_k$, $v_n = f-Tx_n$
and $x_{-1} =0$ as before. 
In particular, the choice $J_n = \frac{1}{\lambda_n}J$ for some fixed $J$ yields the original MHDM. 

In the sequel, we show an interesting norm decomposition  of the data, as well as convergence of the residual for this generalized MHDM. Then, we point out a couple of special choices for the functionals $J_n$ which yield  known iterative methods for solving \eqref{eq:J_minimizing_sol}.

\subsection{Multiscale norm decomposition of the data}
 Let us start with a decomposition result for the norm of the data $f$ which, adapted to TV-deblurring, can be found in Theorem 2.8 of \cite{VeseDeblurring}. Due to the flexibility of the penalty terms considered for this extension, the result can be transferred to related iterative schemes, as we will illustrate below. To simplify notation, we define  
\begin{equation}\label{eq:zeta}
\zeta_k := T^*(f-Tx_k) = T^*v_k \in \partial J_k(u_k)
\end{equation}
for $k \in \N_0$. 
\begin{theorem}\label{thm:Decompostion_finite_equality}
Let $(J_n)_{n \in \N_0}$ be a sequence of proper, convex, lower-semicontinuous functions such that a sequence $(u_n)_{n \in \N_0}$ is well-defined via \eqref{step flexible}. Then for any $n \in \N_0$, one has \begin{equation}\label{eq:decomposition}
    \norm{f}^2 =\norm{v_n}^2+ \sum\limits_{k = 0}^n\pars{\norm{Tu_k}^2 + 2\inner{\zeta_k}{u_k}}.
\end{equation}
\begin{proof}
For $k \in \N_0$, it is $v_{k-1} = v_{k} +Tu_k$. Therefore,
\begin{equation*}
\norm{v_{k-1}}^2 = \norm{v_{k}}^2 + \norm{Tu_k}^2 + 2\inner{v_k}{Tu_k} = \norm{v_{k}}^2 + \norm{Tu_k}^2 + 2\inner{\zeta_k}{u_k}.
\end{equation*}
Telescoping, we obtain \begin{equation*}
    \sum\limits_{k = 0}^n \pars{\norm{Tu_k}^2 + 2\inner{\zeta_k}{u_k}} = \sum\limits_{k = 0}^n \pars{\norm{v_{k-1}}^2 - \norm{v_k}^2} = \norm{f}^2 -\norm{v_n}^2.
\end{equation*}
\end{proof}
\end{theorem}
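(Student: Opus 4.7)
The plan is to telescope the squared norm of the residual using the simple recursion $v_{k-1}=v_k+Tu_k$, which follows directly from $v_k=f-Tx_k$ together with $x_k=x_{k-1}+u_k$.

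First, I would square this one-step relation to get, for each $k\in\N_0$, the identity $\norm{v_{k-1}}^2=\norm{v_k}^2+\norm{Tu_k}^2+2\inner{v_k}{Tu_k}_H$. Next, I would move the cross term across the adjoint: $\inner{v_k}{Tu_k}_H=\inner{T^*v_k}{u_k}=\inner{\zeta_k}{u_k}$, where the last equality is just the definition \eqref{eq:zeta} of $\zeta_k$. This turns the one-step identity into $\norm{v_{k-1}}^2-\norm{v_k}^2=\norm{Tu_k}^2+2\inner{\zeta_k}{u_k}$.

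Finally, I would sum this identity over $k=0,1,\dots,n$. The left-hand side telescopes to $\norm{v_{-1}}^2-\norm{v_n}^2$, and since $x_{-1}=0$ yields $v_{-1}=f-T\cdot 0=f$, this equals $\norm{f}^2-\norm{v_n}^2$. Rearranging the resulting equation produces precisely \eqref{eq:decomposition}.

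There is no real obstacle here: the proof is a direct telescoping argument, essentially a Pythagorean computation for the sequence of residual updates. The convexity and lower-semicontinuity hypotheses on the $J_n$ are used only implicitly, to guarantee that the iterates $u_n$ defined by \eqref{step flexible} exist, and the subgradient inclusion encoded in $\zeta_k\in\partial J_k(u_k)$ plays no role in the identity itself; only the algebraic equality $\zeta_k=T^*v_k$ is needed. The interest of the statement therefore lies not in the proof but in how one interprets the terms $\inner{\zeta_k}{u_k}$ when specializing the $J_n$ to recover, for instance, the Bregman iteration alluded to in the introduction.
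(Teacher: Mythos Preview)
Your proposal is correct and follows essentially the same approach as the paper: expand $\norm{v_{k-1}}^2$ via $v_{k-1}=v_k+Tu_k$, rewrite the cross term using $\zeta_k=T^*v_k$, and telescope. Your write-up is in fact slightly more explicit than the paper's in spelling out why $v_{-1}=f$ and in noting that the convexity and lower-semicontinuity assumptions are used only for well-definedness of the iterates, not for the identity itself.
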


We now extend the class of functionals for which convergence of the residual can be shown by modifying the proof of Theorem 2.8 in \cite{VeseDeblurring}. In particular, we focus on the case of, possibly different, seminorm penalties. For this, let us recall the characterization of the subgradient of seminorms. For any seminorm $J$ and any $x_0 \in \dom J$, it is (see for instance Theorem 2.4.14 in~\cite{zalinescu2002convex}):\begin{equation}\label{eq:Subdifferential_seminorm}
    \partial J(x_0) = \set{x^* \in X: \inner{x^*}{x_0} = J(x_0), \ \inner{x^*}{x}\le J(x)\text{ for all }x \in X}. 
\end{equation}

\begin{remark}
In the case of the original MHDM, where $J_n = \inv{(\lambda_n)} J$ with a seminorm $J$, we obtain that $\lambda_n \zeta_n \in \partial J(u_n)$, which by \eqref{eq:Subdifferential_seminorm} implies $\inner{\zeta_n}{u_n} = \frac{J(u_n)}{\lambda_n}$. Thus, \eqref{eq:decomposition} reads as \begin{equation}\label{eq:Decomposition_seminorms}
    \norm{f}^2 =  \norm{v_n}^2 +\sum\limits_{k  = 0}^n\pars{\norm{Tu_k}^2 + 2\frac{J(u_k)}{\lambda_k}}.
\end{equation}
\end{remark}
In particular, \eqref{eq:decomposition} means \begin{equation*}
    \sum\limits_{k = 0}^\infty\pars{\norm{Tu_k}^2 + 2\inner{\zeta_k}{u_k}}\le \norm{f}^2,
\end{equation*}
with equality if and only if $\norm{v_n}$ converges to $0$. Theorem \ref{eq:Convergence_rate_normpowers} and Theorem 2.1 in \cite{Nachman} give sufficient conditions on the choice of parameters for this convergence to happen.

\begin{lemma}\label{lemma:Convergence_Residual_Generalization}
Let $(J_n)_{n \in \N_0}$ be a sequence of seminorms and assume there are constants $C_{l,j}\ge 0$ such that \begin{equation*}
    J_l(u) \le C_{l, j} J_j(u)
\end{equation*} for all $u \in X$ and $l,j \in \N_0$ with $l>j$. Furthermore, assume $\lim\limits_{l \to \infty}C_{l,j} = 0$ for all $j \in \N_0$. If $J_0(x^\dagger) < \infty$, then the sequence of residuals 
$(v_n)$ with 
$v_n = f -T x_n$ defined by the generalized MHDM method 
\eqref{step flexible}
 converges to $0$ in the strong topology of $X$.
\begin{proof}
Fix $k \in \N_0$ and let $N \in \N$. One has \begin{equation*}
    \norm{v_{k+N}}^2 = \inner{v_{k+N}}{v_{k+N}} = \inner{v_{k+N}}{v_k} - \inner{v_{k+N}}{\sum\limits_{i = k+1}^{k+N}Tu_i}.
\end{equation*}
We will show that both summands on the right-hand side of the former equality converge to $0$. For the first one, recall that $T^*v_{i} \in \partial J_{i}(u_{i})$ by \eqref{eq:zeta}, which with \eqref{eq:Subdifferential_seminorm} implies $\inner{T^*v_{i}}{y} \le J_{i} (y)$ for all $y \in X$, with equality if and only if $y = u_i$. Additionally, let $\eps >0$. Using the triangle inequality of $J_{k+N}$, it is \begin{align*}
    \inner{v_{k+N}}{v_k}&= \inner{v_{k+N}}{f-Tx_k} = \inner{T^*v_{k+N}}{x^\dagger-\sum\limits_{i = 0}^ku_i} \le J_{k+N}\pars{x^\dagger - \sum\limits_{i = 0}^ku_i} \\& \le J_{k+N}(x^\dagger) +\sum\limits_{i = 0}^k J_{k+N}(u_i) \le C_{k+N,0}J_0(x^\dagger) + \sum_{i = 0}^k C_{k+N,i}J_i(u_i) \\& = C_{k+N,0}J_0(x^\dagger) + \sum\limits_{i = 0}^kC_{k+N,i}\inner{T^*v_i}{u_i} \le C_{k+N,0} J_0(x^\dagger) + \frac{\eps}{2}\norm{f}^2,
\end{align*} where the last inequality follows from \eqref{eq:decomposition} if $N$ is chosen large enough such that $C_{k+N,i} \le \eps$ for all $i \le k$.  Letting $N \to \infty$ now implies convergence of the first summand. Using the triangle inequality and the subdifferential property of $T^*v_{k+N}$ again, we estimate \begin{align*}
    \abs{\inner{v_{k+N}}{\sum\limits_{i = k+1}^{k+N}Tu_i}} = \abs{\inner{T^*v_{k+N}}{\sum\limits_{i = k+1}^{k+N}u_i}} \le \sum\limits_{i = k+1}^{k+N}J_{k+N}(u_i) \le \sum\limits_{i = k+1}^{k+N} C_{k+N,k+1}J_i(u_i).
\end{align*}
Letting $N \to \infty$ and taking the boundedness of $\sum\limits_{k = 0}^\infty J_k(u_k)$ by \eqref{eq:decomposition} into consideration, we obtain that the second summand converges to $0$, too. 
\end{proof}
\end{lemma}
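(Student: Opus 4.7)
The plan is to combine the summability consequences of the decomposition identity \eqref{eq:decomposition} with the subgradient characterization of seminorms \eqref{eq:Subdifferential_seminorm}. Since $\zeta_k \in \partial J_k(u_k)$ and each $J_k$ is a seminorm, \eqref{eq:Subdifferential_seminorm} gives $\langle \zeta_k, u_k\rangle = J_k(u_k)$, so Theorem~\ref{thm:Decompostion_finite_equality} immediately yields
\[
\sum_{k=0}^{\infty}\bigl(\|Tu_k\|^2 + 2 J_k(u_k)\bigr) \le \|f\|^2 .
\]
In particular, $\sum_k J_k(u_k) < \infty$; this summability is what will interact with the assumption $C_{l,j} \to 0$ to push the residual to zero.

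The key algebraic identity I would exploit is the telescoping $v_k = v_{k+N} + \sum_{i=k+1}^{k+N} Tu_i$. Taking an inner product with $v_{k+N}$ and writing $\zeta_{k+N} = T^* v_{k+N}$ gives
\[
\|v_{k+N}\|^2 = \langle v_{k+N}, v_k\rangle - \sum_{i=k+1}^{k+N} \langle \zeta_{k+N}, u_i\rangle .
\]
To bound the first term, I would use $v_k = T(x^\dagger - x_k)$ (from $T x^\dagger = f$), the subgradient inequality $\langle \zeta_{k+N}, y\rangle \le J_{k+N}(y)$ from \eqref{eq:Subdifferential_seminorm}, the triangle inequality of the seminorm $J_{k+N}$, and the hypothesis $J_l \le C_{l,j} J_j$, arriving at
\[
\langle v_{k+N}, v_k\rangle \le J_{k+N}(x^\dagger) + \sum_{i=0}^{k} J_{k+N}(u_i) \le C_{k+N,0} J_0(x^\dagger) + \sum_{i=0}^{k} C_{k+N,i} J_i(u_i).
\]
For fixed $k$ this vanishes as $N \to \infty$, because $C_{l,j} \to 0$ for every fixed $j$ and the sum only has $k+1$ summands.

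For the second term I would again apply the seminorm subgradient bound to obtain $|\langle \zeta_{k+N}, u_i\rangle| \le J_{k+N}(u_i) \le C_{k+N,i} J_i(u_i)$ for $k+1 \le i \le k+N$, and try to dominate the resulting sum by (a multiple of) the tail $\sum_{i > k} J_i(u_i)$, which tends to $0$ as $k \to \infty$ by the summability established above. The main obstacle I anticipate is controlling the constants $C_{k+N,i}$ uniformly in $i$ on the moving range $\{k+1,\dots,k+N\}$: here we cannot fix the second index and let the first tend to infinity as in the first-term estimate. The natural way through is to observe (or assume) monotonicity of $C_{l,j}$ in the second argument, giving $C_{k+N,i} \le C_{k+N,k+1}$ for all $i \ge k+1$; alternatively, one can iterate $J_l \le C_{l,k+1} J_{k+1}$ and combine with the subgradient equality for $u_i$. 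Once both estimates are in hand, I would fix $\varepsilon > 0$, first pick $k$ large enough that the tail bound is below $\varepsilon$, then pick $N$ large enough that the first-term bound is below $\varepsilon$, concluding $\|v_{k+N}\|^2 \to 0$. Since every sufficiently large index $n$ can be written as $k+N$ with both $k$ and $N$ large, this proves $\|v_n\| \to 0$.
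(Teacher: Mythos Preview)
Your proposal is correct and follows essentially the same route as the paper: the identical splitting $\|v_{k+N}\|^2 = \langle v_{k+N},v_k\rangle - \sum_{i=k+1}^{k+N}\langle \zeta_{k+N},u_i\rangle$, the same use of \eqref{eq:Subdifferential_seminorm} and the triangle inequality to reduce the first term to $C_{k+N,0}J_0(x^\dagger)+\sum_{i\le k}C_{k+N,i}J_i(u_i)$, and the same appeal to the summability $\sum_k J_k(u_k)<\infty$ from \eqref{eq:decomposition}. For the second term the paper does precisely what you list as the first option, bounding $J_{k+N}(u_i)\le C_{k+N,k+1}J_i(u_i)$ and then using $C_{k+N,k+1}\to 0$ together with the boundedness of $\sum_i J_i(u_i)$; your two–stage ``choose $k$, then $N$'' wrap-up is a harmless variant of the paper's ``fix $k$, let $N\to\infty$''.
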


\begin{remark}
In the situation of the original MHDM, it is $J_l = \frac{1}{\lambda_l} J$, which yields $J_l = \frac{\lambda_j}{\lambda_l}J_j$. Thus, the assumptions of Lemma \ref{lemma:Convergence_Residual_Generalization} are satisfied if and only if $\lim\limits_{j \to \infty}\lambda_j = \infty$ and $J(x^\dagger) < \infty$.
\end{remark}

In the case of arbitrary convex penalty terms $J_n$, we can also adapt part $(ii)$ of Theorem \ref{thm:Convergence_results}.
\begin{lemma}
Let $u_n$ be obtained by \eqref{step flexible} with a sequence of proper, convex, lower-semicontinuous functionals $(J_n)_{n\in \N}$. Assume furthermore that there is $C\in \R$ such that $\set{u : \limsup\limits_{n\to\infty}J_n(u) <C}$ is dense in $X$. If the sequence $(J_n)_{n \in \N_0}$ is uniformly bounded from below and $\sum\limits_{n = 0}^\infty J_n(0) < \infty$, then $(f-Tx_n)_{n \in \N_0}$ is bounded, and every weak limit point is in the kernel of $T^*$. In particular, this implies that $T^*(f-Tx_n)$ converges to $0$ in the weak-*-topology of $X^*$. \begin{proof}
By the same reasoning as in the proof of Theorem \ref{thm:Convergence_results}, we obtain \begin{equation*}
    \norm{f-Tx_n}^2 \le \norm{f}^2 + \sum_{k = 0}^n J_k(0)
\end{equation*}
and \begin{equation*}
    \inner{T^*(f-Tx_n)}{z} \le J_n(z) +K
\end{equation*}
for some constant $K \ge 0$ and all $z \in X$. Passing to weak limit points of $f-Tx_n$ proves the claim in the same way as in part $(ii)$ of Theorem \ref{thm:Convergence_results}.
\end{proof}
\end{lemma}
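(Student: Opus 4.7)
My plan is to parallel the two-part argument of Theorem~\ref{thm:Convergence_results}(ii): first bound the residual using the summability of $J_n(0)$, then characterize weak limit points through the subgradient inclusion, with the density of $\dom J$ replaced by the density of the set $D := \{u \in X : \limsup_{n\to\infty} J_n(u) < C\}$.

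For boundedness, I would compare $u_n$ with the competitor $u = 0$ in the optimality for \eqref{step flexible} to obtain
\begin{equation*}
    \tfrac12\|v_n\|^2 + J_n(u_n) \le \tfrac12\|v_{n-1}\|^2 + J_n(0),
\end{equation*}
and then use the uniform lower bound $J_n(u_n) \ge B$ together with $\sum_{k=0}^\infty J_k(0) < \infty$ to telescope into a uniform bound on $\|v_n\|^2$. In particular, $(v_n) = (f - T x_n)$ is bounded in $H$ and admits a weakly convergent subsequence $v_{n_k} \rightharpoonup v$.

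To locate the weak limits, I would use the optimality condition $T^* v_n \in \partial J_n(u_n)$. Expanding the subgradient inequality and substituting $Tu_n = v_{n-1} - v_n$ (which is uniformly bounded in norm) together with $-J_n(u_n) \le -B$ yields a uniform estimate
\begin{equation*}
    \langle T^* v_n, z\rangle \le J_n(z) + K, \qquad \forall z \in X,\ \forall n \in \N_0,
\end{equation*}
for some constant $K$. Restricting to $z \in D$ forces $J_n(z) < C$ for all sufficiently large $n$, so passing to the weak limit along $(n_k)$ via $\langle T^* v_{n_k}, z\rangle = \langle v_{n_k}, T z\rangle \to \langle T^* v, z\rangle$ gives $\langle T^* v, z\rangle \le C + K$ for every $z \in D$. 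To conclude $T^* v = 0$, I would reproduce the density argument from Theorem~\ref{thm:Convergence_results}(ii): if $T^* v \ne 0$, then for every $m \in \N$ one can find $y_m \in X$ with $\langle T^* v, y_m\rangle \ge m$, and density of $D$ produces $z_m \in D$ with $|\langle T^* v, y_m - z_m\rangle| \le 1$, so $\langle T^* v, z_m\rangle \ge m - 1$, contradicting the uniform bound $C + K$ for $m$ large. Since the argument applies to every weakly convergent subsequence of the bounded sequence $(v_n)$, every weak limit point lies in $\ker T^*$, and consequently $T^*(f - T x_n)$ converges to $0$ in the weak-$*$ topology of $X^*$.

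The main obstacle is the final density step. Because $D$ is only assumed dense as a set (it is a sub-level set of $\limsup_n J_n$, not a subspace), one cannot use linearity to produce arbitrarily large inner products inside $D$. Instead, the density of $D$ in the strong topology must be used to approximate arbitrary elements of $X$---including those realizing large values of $\langle T^* v, \cdot\rangle$---by elements of $D$, which is exactly what the hypothesis guarantees.
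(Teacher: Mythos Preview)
Your proposal is correct and follows essentially the same approach as the paper's proof, which merely refers back to Theorem~\ref{thm:Convergence_results}(ii); you have faithfully expanded the two referenced steps (the telescoping bound from the comparison with $u=0$, and the subgradient inequality combined with the density argument) and adapted the density step from $\dom J$ to the set $D=\{u:\limsup_n J_n(u)<C\}$ exactly as intended.
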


Let us now illustrate how the decomposition result from Theorem \ref{thm:Decompostion_finite_equality} can be applied to related iterative methods.

\subsection{The tight MHDM}
A tight version of the MHDM was introduced in \cite{Nachman}  in order to ensure boundedness, and consequently convergence of the iterates $x_n$:
\begin{equation}\label{eq:tight_MHDM}
u_n  = \argmin_{u\in X}    \frac{\lambda_n}{2}\norm{T(u+x_{n-1})-f}^2 +\lambda_na_n J(u+x_{n-1}) + J(u),\quad n\in\N_0,
\end{equation} 
with $u_n$ playing the same role as above, i.e.,  $u_n = x_{n}-x_{n-1}$.
 This tight iteration was generalized in \cite{MultiscaleRefinementImaging} to a refined version:
\begin{equation}\label{eq:tight_refinement}
u_n  = \argmin_{u\in X}    \frac{\lambda_n}{2}\norm{T(u+x_{n-1})-f}^2 +\lambda_na_n J(u+x_{n-1}) + R_n(u), \quad n\in\N_0,
\end{equation} 
where $R_n$ are seminorms. 
 Under mild conditions (cf.~\cite{Nachman,MultiscaleRefinementImaging}), convergence of the iterates could be proved up to subsequences for both the tight MHDM and its refinement.

Clearly, \eqref{eq:tight_refinement} is a special case of the general iteration 
\eqref{step flexible} with $J_{n}$ defined as 
\[ J_n(u) =  \lambda_n a_nJ(u+x_{n-1}) +   R_n(u). \]
By Lemma~\ref{thm:Decompostion_finite_equality}, we obtain 
the following decomposition:
\[ \|f\|^2 = \sum_{k=0}^\infty \left(\|T x_{k}-Tx_{k-1}\|^2 +
2 \langle \lambda_k a_k \partial J(x_{k}) +  \partial R_k(u_k), x_{k}-x_{k-1} \rangle\right),
\]
assuming that the conditions for convergence in 
\cite{MultiscaleRefinementImaging} hold. Here $\partial J(x_{k})$ and $\partial R_k(u_k)$ are generic notations for the appropriate subgradients.

\subsection{Bregman iteration}
The well known Bregman iteration can also be considered in 
the framework of generalized penalty terms \eqref{step flexible}.
Recall that the Bregman iteration  is  defined via \eqref{eq:bregman}, i.e., \[ x_n \in \argmin_{x\in X} \frac{\lambda_n}{2}  \|T x -f\|^2 +
D_J^{p_{n-1}}(x,x_{n-1}),\quad n\in\N, \] 
where for the initial step we set $p_0 = 0$ and afterward choose 
\[ p_{n} = \lambda_n T^*(f-Tx_n) +p_{n-1} \in \partial J(x_{n}).\] 
 Substituting $u =x- x_{n-1}$ in the definition of $x_n$ and omitting those terms in the definition of the Bregman distance \eqref{eq:Bregman_Distance} which are independent of $u$, we observe that $u_n: = x_n-x_{n-1}$ is a minimizer of 
\eqref{step flexible} with 
\[ J_n(u) :=  \frac{1}{\lambda_n} \pars{ J(x_{n-1} + u) - \inner{p_{n-1}}{u}}, \qquad p_{n-1} \in \partial J(x_{n-1}).\]
Hence, we may apply the decomposition result \eqref{eq:decomposition} to the Bregman iteration. 
Due to \begin{equation*}
    \frac{1}{\lambda_n}(p_n -p_{n-1}) \in \partial J(x_n) - \set{p_{n-1}} \in \partial J_n(x_n),
\end{equation*}
we obtain  \[ \|f\|^2 = \norm{Tx_n-f}^2 + 
 \sum_{k=0}^n \left(\|T u_k\|^2 + 2
\lambda_k^{-1} D_J^{sym}(x_{k},x_{k-1})\right),  \]
with the symmetric Bregman distance 
\[ D_J^{sym}(x_{k},x_{k-1})  = \langle p_k - p_{k-1}, x_{k}-x_{k-1} \rangle. \]
Since the residual $Tx_n-f$ converges to $0$ as $n \to \infty$ for appropriate parameters $\lambda_n$ (cf. \cite{frick_scherzer, orig_breg}), we get the full decomposition\begin{equation*}
    \|f\|^2 =  \sum_{k=0}^\infty \left(\|T x_{k}-Tx_{k-1}\|^2 + 2
\lambda_k^{-1} D_J^{sym}(x_{k}, x_{k-1})\right).
\end{equation*}
As recalled in the introduction, 
in the case of $J(u) =\frac{\|u\|^2}{2}$ in Hilbert spaces, the Bregman iteration becomes the iterated Tikhonov 
regularization \eqref{eq:nonstationary_tikhonov},

 for which the symmetric Bregman distance $D_J^{sym}(x_{k},x_{k-1})$ reads $\|x_{k}-x_{k-1}\|^2$. 
Thus, the multiscale decomposition for iterated 
Tikhonov regularization in Hilbert spaces has the form
\begin{equation*}
    \|f\|^2 =  \sum_{k=0}^\infty \left(\|T x_{k}-T x_{k-1}\|^2 + 2
\lambda_k^{-1} \|x_{k}-x_{k-1}\|^2\right).
\end{equation*}
To the best of our knowledge, this result does not seem to be known. 

\section{Comparison of the MHDM and the generalized Tikhonov regularization}\label{sect:ComparisonTikhonov}
We will now focus on comparing the iterative MHDM to a single step  Tikhonov regularization,
i.e., classical generalized 
Tikhonov regularization as defined in~\eqref{eq:tikhonov}. Generally, the first iterate $x_0$ of the MHDM is by definition the Tikhonov regularizer at scale $\lambda_0$. Nonetheless, we do not expect the  MHDM iterate $x_k$ to coincide with the solution $x_{\lambda_k}$ of the Tikhonov regularization corresponding to the parameter $\lambda_k$. Yet, there are frameworks in which this unexpected situation occurs anyway. As an introductory example, consider the case of sparse denoising.

\begin{example}\label{ex:sparsity}
Let $X = \ell^2$, $T  =Id$ and $J = \ell^1$. Using the substitution $x = u +x_{n-1}$ as in \eqref{eq:MHDM_step} and setting $x_{-1} = 0$, each iteration step means to compute \begin{equation*}
    x_n = \argmin\limits_{x \in \ell^2}\set{ \frac{\lambda_n}{2}\norm{x-f}_{\ell^2}^2 + \norm{x-x_{n-1}}_{\ell^1}}.
\end{equation*} 
Note that  the $i$-th component of the iterate $x_n$ verifies
\begin{equation*}
    x_n^i = \argmin\limits_{s \in \R}\set{\frac{\lambda_n}{2}(s-f^i)^2 + \abs{s-x_{n-1}^i}}.
\end{equation*}
Therefore \begin{equation}\label{eq:soft_shrinkage_compinents}
    x_n^i = \begin{cases}
    f^i-\frac{1}{\lambda_n} & \text{ if }f^i>x_{n-1}^i+\frac{1}{\lambda_n}\\
    f^i+\frac{1}{\lambda_n} & \text{ if }f^i<x_{n-1}^i-\frac{1}{\lambda_n}\\
     x_{n-1}^i & \text{ if } \abs{f^i-x_{n-1}^i} \le\frac{1}{\lambda_n}
    \end{cases}.
\end{equation}

 Now we can analyze the sequence generated by the MHDM under the assumption that $(\lambda_n)_{n\in \N_0}$ is strictly increasing. We show by induction that  for any component $i$, one has 
\begin{equation}\label{eq:soft_shrinkage}
        x_n^i = \begin{cases}
    f^i-\frac{1}{\lambda_n} & \text{ if }f^i>+\frac{1}{\lambda_n}\\
    f^i+\frac{1}{\lambda_n} & \text{ if }f^i<-\frac{1}{\lambda_n}\\
    0 & \text{ if } \abs{f^i} \le\frac{1}{\lambda_n}
    \end{cases},
\end{equation}
meaning that performing the first $(n+1)$ iterations of the MHDM is nothing but applying the soft shrinkage operator at scale $\frac{1}{\lambda_n}$.  In other words, the $(n+1)$-th step of the MHDM procedure is the same as the convex $\ell^2$ regularization with parameter $\lambda_n$ and $\ell^1$-penalty term, 
\begin{equation*}
    x_n = \argmin\limits_{x \in \ell^2}\set{ \frac{\lambda_n}{2}\norm{x-f}_{\ell^2}^2 + \norm{x}_{\ell^1}}.
\end{equation*}

Indeed, for $n = 0$ this is true by \eqref{eq:soft_shrinkage_compinents}, since by definition $x_{-1} = 0$. Now, assume that \eqref{eq:soft_shrinkage} holds for some $n\in \N$. We distinguish three cases: \begin{enumerate}
    \item Assume $f^{i} > \frac{1}{\lambda_{n+1}}$. If additionally $f^{i}> \frac{1}{\lambda_n}$, we must have $x_n^{i} = f^{i} -\frac{1}{\lambda_n}$ by assumption and therefore, it is $f^{i} > f^{i} -\frac{1}{\lambda_n} + \frac{1}{\lambda_{n+1}} = x_n^{i} +\frac{1}{\lambda_{n+1}}$, so that \eqref{eq:soft_shrinkage_compinents} with $n$ replaced by $n+1$ implies $x_{n+1}^{i} = f^{i} -\frac{1}{\lambda_{n+1}}$. Otherwise, it must be $\frac{1}{\lambda_{n+1}}< f^{i} \le \frac{1}{\lambda_n}$. In that case we have $x_n^{i} = 0$ and hence $f^{i} > x_n^{i}+\frac{1}{\lambda_{n+1}}$. Again, \eqref{eq:soft_shrinkage_compinents} with $n$ replaced by $n+1$ yields $x_{n+1}^{i} = f^{i} -\frac{1}{\lambda_{n+1}}$. 
    \item If $f^{i} < -\frac{1}{\lambda_{n+1}}$, we obtain analogously to the previous case that  $x_{n+1}^{i} = f^{i} + \frac{1}{\lambda_{n+1}}$.
    \item For $\abs{f^{i}} \le \frac{1}{\lambda_{n+1}}$, we must also have $\abs{f^{i}} \le \frac{1}{\lambda_{n}}$ by the monotonicity of the $\lambda_n$. Therefore, one has $x_n^{i} = 0$ by assumption, and $\abs{f^{i} -x_n^{i}} = \abs{f^{i}} \le \frac{1}{\lambda_{n+1}}$. Thus, \eqref{eq:soft_shrinkage_compinents} with $n$ replaced by $n+1$ yields $x_{n+1}^{i} = 0$.
\end{enumerate}

\end{example}

We will now characterize under which conditions the $(n+1)$-th step of the MHDM and the  Tikhonov iteration with parameter $\lambda_n$ coincide. For the remainder of the section, let $J$ be a seminorm. In our further analysis, we consider a dual seminorm which will help  to characterize minimizers of the  Tikhonov functional \eqref{eq:Tikhonov_functional}. In  the special case where $J = \tv{\cdot}$, the results on the dual norm can be found in \cite{Meyer} and \cite{VeseBook}. The reader is referred to  Section 1.3 of \cite{andreu2004parabolic} for the general case of a seminorm $J$.

\begin{definition}
The map\begin{equation*}
    \abs{\cdot}_*: X^* \to \R \cup \set{\infty}, \, \abs{x^*}_* = \sup\limits_{J(x) \neq 0} \inner{x^*}{\frac{x}{J(x)}}
\end{equation*}
 is called the dual seminorm of $X$ induced by the seminorm $J$, where by convention  $\frac{x}{J({x})} = 0$ if $J(x) = \infty$. 
\end{definition}
It can be easily seen that $\abs{\cdot}_*$ is indeed a seminorm. 

\begin{remark}\label{rem:star_norm_subdifferential}
The characterization \eqref{eq:Subdifferential_seminorm} implies that  $\abs{x^*}_* =1$, for any $x_0$ and any subgradient $x^*\in \partial J(x_0)$.
\end{remark}
Using the same arguments as in section 2.1 of \cite{VeseDeblurring}, we can characterize the minimizers of the  Tikhonov functional.

\begin{lemma}\label{lemma:Characterization_minimizer}
Let $J$ be a seminorm. Let $f \in X$ and $\lambda>0$. The following statements hold true:
\begin{enumerate}
    \item $x_\lambda = 0$ is a minimizer of \eqref{eq:Tikhonov_functional} if and only if $\abs{T^*f}_* \le \frac{1}{\lambda}.$
    \item If $\frac{1}{\lambda}<\abs{T^*f}_*<\infty$, then $x_\lambda$ minimizes \eqref{eq:Tikhonov_functional} if and only if
    
    $\abs{T^*(f-Tx_\lambda)}_* = \frac{1}{\lambda}$ and $\inner{T^*(f-Tx_\lambda)}{x_\lambda} = \frac{1}{\lambda}J(x_\lambda)$.
\end{enumerate}
\end{lemma}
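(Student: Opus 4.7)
My plan is to derive both parts of the lemma by applying Fermat's rule together with the seminorm subdifferential characterization \eqref{eq:Subdifferential_seminorm}. Since the data fidelity $u \mapsto \frac{\lambda}{2}\norm{Tu-f}^2$ is convex and Fr\'echet differentiable with derivative $-\lambda T^*(f-Tu)$, and $J$ is proper, convex and lower semicontinuous, the subdifferential sum rule identifies minimizers of $F_\lambda$ exactly by the inclusion
\[
\lambda T^*(f-Tx_\lambda) \in \partial J(x_\lambda),
\]
so everything reduces to unpacking this inclusion via \eqref{eq:Subdifferential_seminorm}.

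For part 1, I would specialize the inclusion to $x_\lambda=0$. The characterization \eqref{eq:Subdifferential_seminorm} at the origin collapses to $\partial J(0)=\{x^*\in X^* : \inner{x^*}{x}\le J(x)\text{ for all }x\in X\}$, which, by the definition of $\abs{\cdot}_*$, is precisely $\{x^* : \abs{x^*}_*\le 1\}$. Hence $\lambda T^*f\in\partial J(0)$ if and only if $\abs{T^*f}_*\le\frac{1}{\lambda}$.

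For part 2, I would apply \eqref{eq:Subdifferential_seminorm} at $x_\lambda$ directly. Its equality clause yields the identity $\inner{T^*(f-Tx_\lambda)}{x_\lambda}=\frac{1}{\lambda}J(x_\lambda)$, while its upper-bound clause translates, via the definition of the dual seminorm, into $\abs{T^*(f-Tx_\lambda)}_*\le\frac{1}{\lambda}$. Once $J(x_\lambda)>0$ is known, dividing the equality clause by $J(x_\lambda)$ immediately produces the matching lower bound
\[
\abs{T^*(f-Tx_\lambda)}_* \;\ge\; \frac{\inner{T^*(f-Tx_\lambda)}{x_\lambda}}{J(x_\lambda)} \;=\; \frac{1}{\lambda},
\]
and both asserted identities hold. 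The converse implication is obtained by substituting these two identities back into \eqref{eq:Subdifferential_seminorm}, and then invoking Fermat's rule again.

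The main technical obstacle is justifying $J(x_\lambda)>0$ under the standing hypothesis $\frac{1}{\lambda}<\abs{T^*f}_*<\infty$. I intend to argue by contradiction. First, the finiteness $\abs{T^*f}_*<\infty$ forces $T^*f$ to vanish on $\ker J$: any $x_0\in\ker J$ with $\inner{T^*f}{x_0}\neq 0$ would produce a sequence $y_n = nx_0+x_1$ with $J(x_1)\in(0,\infty)$ along which $J(y_n)=J(x_1)$ is constant but $\inner{T^*f}{y_n}$ diverges, blowing up the supremum in the definition of $\abs{T^*f}_*$. Next, if $J(x_\lambda)=0$, then the restriction of $F_\lambda$ to the ray $t\mapsto tx_\lambda$ is the smooth quadratic $\frac{\lambda}{2}\norm{tTx_\lambda-f}^2$; minimization at $t=1$ forces $\norm{Tx_\lambda}^2=\inner{Tx_\lambda}{f}=\inner{x_\lambda}{T^*f}=0$ since $x_\lambda\in\ker J$. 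Thus $Tx_\lambda=0$ and $T^*Tx_\lambda=0$, so the already-established bound $\abs{T^*(f-Tx_\lambda)}_*\le\frac{1}{\lambda}$ collapses to $\abs{T^*f}_*\le\frac{1}{\lambda}$, contradicting the hypothesis and completing the plan.
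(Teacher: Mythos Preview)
Your argument is correct and follows the standard route the paper alludes to by citing \cite{VeseDeblurring}: the first-order optimality condition $\lambda T^*(f-Tx_\lambda)\in\partial J(x_\lambda)$ combined with the seminorm subdifferential characterization \eqref{eq:Subdifferential_seminorm}. Your contradiction argument excluding $J(x_\lambda)=0$ under the hypothesis $\tfrac{1}{\lambda}<\abs{T^*f}_*<\infty$ fills in a detail that is often left implicit.
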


We can now use Lemma \ref{lemma:Characterization_minimizer} to analyze when the Tikhonov regularization agrees with the MHDM. Let us recall some notation. For $k \in \N_0$, denote by $x_{\lambda_k}$ a minimizer of the Tikhonov functional with parameter $\lambda_k$ cf. \eqref{eq:tikhonov}.

A subgradient of $J$ at $x_{\lambda_k}$ is given by \begin{equation}\label{eq:xi_lambda}
    \xi_{\lambda_k} := \lambda_k T^*(f-Tx_{\lambda_k})\in \partial J(x_{\lambda_k}).
\end{equation}
Analogously, note that the MHDM iterate $x_k = x_{k-1} +u_k$ with $u_k$ as computed in \eqref{eq:MHDM_initial_step} can be equivalently  obtained as \begin{equation}\label{MHDM with substitution}
    x_k \in \argmin_{x \in X} \frac{\lambda_k}{2} \norm{Tx-f}^2 + J(x-x_{k-1}),
\end{equation}
with 
\begin{equation}\label{deq:xi_k}
    \xi_k := \lambda_k T^*(f-Tx_k) \in \partial J(x_k-x_{k-1}).
\end{equation}

In general, a comparison of $x_{\lambda_k}$ and $x_k$ can be done by using the dual seminorm. For instance, in the case of TV-denoising it was pointed out in Section 3.2 of \cite{VeseMultiscale} that $\norm{x_{\lambda_k} -x_k}_{W^{-1,\infty}} \le \frac{1}{\lambda_k}$. We present next the general form of this result.

\begin{corollary}
Let $J$ be a seminorm and let $x_k$ be obtained as in \eqref{eq:MHDM_initial_step} and \eqref{eq:MHDM_step}. Then, for any $k \in \N_0$, it holds \begin{equation}\label{eq:Tikhonov_estimate_star_norm}
    \abs{T^*T(x_{\lambda_k} -x_k)}_* \le \frac{2}{\lambda_k}.
\end{equation}
\begin{proof}
By \eqref{eq:xi_lambda}, \eqref{deq:xi_k}, \eqref{eq:Subdifferential_seminorm} and the definition of $\abs{\cdot}_*$,  one has \begin{equation*}
    \abs{T^*(f-Tx_k)}_* = \abs{\frac{\xi_k}{\lambda_k}}_* = \frac{1}{\lambda_k}
    \qquad \text{and } \qquad 
    \abs{T^*(f-Tx_{\lambda_k})}_* = \abs{\frac{\xi_{\lambda_k}}{\lambda_k}}_* = \frac{1}{\lambda_k}.
\end{equation*}
Thus, \eqref{eq:Tikhonov_estimate_star_norm} follows by the triangle inequality.
\end{proof}
\end{corollary}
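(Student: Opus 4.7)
The plan is to exploit the two optimality conditions \eqref{eq:xi_lambda} and \eqref{deq:xi_k} together with the preparatory remark that any subgradient $x^*$ of the seminorm $J$ satisfies $\abs{x^*}_* = 1$. Since both $x_{\lambda_k}$ and the MHDM iterate $x_k$ produce residuals whose image under $\lambda_k T^*$ lies in a subdifferential of $J$ (at $x_{\lambda_k}$ in the first case, at the increment $x_k - x_{k-1}$ in the second), each of the two quantities $T^*(f-Tx_{\lambda_k})$ and $T^*(f-Tx_k)$ has $\abs{\cdot}_*$-norm precisely $1/\lambda_k$. The conclusion then follows by a triangle inequality.

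Concretely, the first step I would take is the algebraic identity
\begin{equation*}
T^*T(x_{\lambda_k} - x_k) = T^*(f - Tx_k) - T^*(f - Tx_{\lambda_k}),
\end{equation*}
which reduces the estimate \eqref{eq:Tikhonov_estimate_star_norm} to controlling the two summands in the dual seminorm. Applying the subadditivity of $\abs{\cdot}_*$ gives
\begin{equation*}
\abs{T^*T(x_{\lambda_k} - x_k)}_* \le \abs{T^*(f - Tx_k)}_* + \abs{T^*(f - Tx_{\lambda_k})}_*.
\end{equation*}

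For the individual bounds, I would invoke \eqref{deq:xi_k} to see that $\lambda_k T^*(f - Tx_k) \in \partial J(x_k - x_{k-1})$, so the remark yields $\abs{T^*(f - Tx_k)}_* = 1/\lambda_k$. A symmetric use of \eqref{eq:xi_lambda} gives $\abs{T^*(f - Tx_{\lambda_k})}_* = 1/\lambda_k$. Summing the two contributions produces the claimed bound $2/\lambda_k$.

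There is no real obstacle here: the corollary is essentially a one-line consequence of the two first-order conditions once the dual-seminorm normalization of subgradients has been established. The only subtle point worth flagging is that the identity $\abs{x^*}_* = 1$ relies on the seminorm characterization \eqref{eq:Subdifferential_seminorm}; if $J$ were merely convex one would typically only obtain $\abs{x^*}_* \le 1$, which would still yield the same estimate but would not exploit the sharpness of the seminorm case. No additional hypothesis on $T$, on the sequence $(\lambda_n)$, or on the existence/uniqueness of the minimizers is required beyond what is already assumed in the statement.
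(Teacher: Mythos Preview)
Your proof is correct and follows essentially the same route as the paper: both use the optimality conditions \eqref{eq:xi_lambda} and \eqref{deq:xi_k} together with the fact that subgradients of a seminorm have dual seminorm equal to $1$, then apply the triangle inequality for $\abs{\cdot}_*$. The only cosmetic difference is that you spell out the algebraic identity $T^*T(x_{\lambda_k}-x_k) = T^*(f-Tx_k) - T^*(f-Tx_{\lambda_k})$ explicitly, which the paper leaves implicit.
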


Let us return to the question of when the MHDM iterates  agree with the generalized Tikhonov regularization. Since  the first iterate $x_0$ is obtained via a Tikhonov regularization with parameter $\lambda_0$, the base case for an inductive proof holds. 

The next theorem verifies the induction
step $k\to k+1$. 
Note that the minimizers of Tikhonov regularization problems \eqref{eq:Tikhonov_functional} are not necessarily unique. Therefore, the equality $x_{\lambda_k} = x_k$ should be understood as choosing the same minimizer for both MHDM and Tikhonov minimization problem. 

\begin{theorem}\label{thm:induction_step}
Let $J$ be a seminorm. Let $(\lambda_k)_{k \in \N}$ be an increasing sequence of positive parameters. Fix $k \in \N$ and assume that 
$x_{\lambda_k} =x_k$. 
Then the solution  $x_{\lambda_{k+1}}$ of the Tikhonov regularization with parameter $\lambda_{k+1}$ minimizes the same functional as the MHDM  iterate $x_{k+1}$, i.e.
\begin{equation}\label{eq:Characterization_equivalence1}
    x_{\lambda_{k+1}}\in \argmin\limits_{x \in X} \set{\frac{\lambda_{k+1}}{2}\norm{Tx-f}^2 + J(x-x_k)},
\end{equation} if and only if \begin{equation}\label{eq:cond_1}
    D_J^{\xi_{\lambda_{k+1}}}(x_{\lambda_{k+1}}-x_{\lambda_k}, x_{\lambda_{k+1}}) = 0.
\end{equation} 
\begin{proof}
First assume that $x_{k+1}$ and $ x_{\lambda_{k+1}}$ coincide, i.e., \eqref{eq:Characterization_equivalence1} holds. 
This implies $\xi_{\lambda_{k+1}}\in \partial J(x_{\lambda_{k+1}}-x_{\lambda_k})$. We therefore obtain \begin{align*}
    0 &\le D_J^{\xi_{k+1}}(x_{\lambda_{k+1}}-x_{\lambda_k},x_{\lambda_{k+1}}) = J(x_{\lambda_{k+1}}-x_{\lambda_k}) - J(x_{\lambda_{k+1}}) +\inner{\xi_{\lambda_{k+1}}}{x_{\lambda_{k}}} \\&= -\pars{J(x_{\lambda_{k+1}}) -J(x_{\lambda_{k+1}}-x_{\lambda_k})-\inner{\xi_{\lambda_{k+1}}}{x_{\lambda_{k}}}} = -D_J^{\xi_{\lambda_{k+1}}}(x_{\lambda_{k+1}},x_{\lambda_{k+1}}-x_{\lambda_k})\le 0 .
\end{align*}
Conversely, assume $ D_J^{\xi_{\lambda_{k+1}}}(x_{\lambda_{k+1}}-x_{\lambda_k}, x_{\lambda_{k+1}}) = 0$. We show that $\xi_{\lambda_{k+1}}$ satisfies the optimality conditions of the MHDM, that is $\xi_{\lambda_{k+1}}\in \partial J(x_{\lambda_{k+1}} -x_{\lambda_k})$.  Since $\xi_{\lambda_{k+1}}\in \partial J(x_{\lambda_{k+1}})$, it is by \eqref{eq:Subdifferential_seminorm}  $\abs{\xi_{\lambda_{k+1}}}_* = 1$ and we only need to show $\inner{\xi_{\lambda_{k+1}}}{x_{\lambda_{k+1}}-x_{\lambda_k}} = J(x_{\lambda_{k+1}} - x_{\lambda_k})$. Indeed, since $x_{\lambda_{k+1}}$ minimizes the Tikhonov functional, we have $\inner{\xi_{\lambda_{k+1}}}{x_{\lambda_{k+1}}} = J(x_{\lambda_{k+1}})$. Thus, by \eqref{eq:cond_1} it is \begin{align*}
   \inner{\xi_{\lambda_{k+1}}}{x_{\lambda_{k+1}}-x_{\lambda_k}} &= \inner{\xi_{\lambda_{k+1}}}{x_{\lambda_{k+1}}-x_{\lambda_k}} + D_J^{\xi_{\lambda_{k+1}}}(x_{\lambda_{k+1}}-x_{\lambda_k}, x_{\lambda_{k+1}}) \\&= \inner{\xi_{\lambda_{k+1}}}{x_{\lambda_{k+1}}-x_{\lambda_k}} + J(x_{\lambda_{k+1}}-x_{\lambda_k}) - J(x_{\lambda_{k+1}}) +\inner{\xi_{\lambda_{k+1}}}{x_{\lambda_{k}}}\\& = \inner{\xi_{\lambda_{k+1}}}{x_{\lambda_{k+1}}} +J(x_{\lambda_{k+1}}-x_{\lambda_k}) - J(x_{\lambda_{k+1}}) \\&= J(x_{\lambda_{k+1}}-x_{\lambda_k}).
\end{align*}   
\end{proof}
\end{theorem}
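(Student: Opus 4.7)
The plan is to reduce the equivalence to a single subgradient-inclusion condition. Recall from \eqref{eq:xi_lambda} that $\xi_{\lambda_{k+1}} = \lambda_{k+1} T^*(f - Tx_{\lambda_{k+1}}) \in \partial J(x_{\lambda_{k+1}})$ encodes the Tikhonov optimality at parameter $\lambda_{k+1}$. Under the hypothesis $x_k = x_{\lambda_k}$, the MHDM optimality condition at step $k+1$ from \eqref{deq:xi_k} and \eqref{MHDM with substitution} reads $\lambda_{k+1} T^*(f - T x_{k+1}) \in \partial J(x_{k+1} - x_{\lambda_k})$. Consequently, $x_{\lambda_{k+1}}$ minimizes the MHDM functional at step $k+1$ if and only if the same element $\xi_{\lambda_{k+1}}$ additionally lies in $\partial J(x_{\lambda_{k+1}} - x_{\lambda_k})$. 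The whole proof therefore reduces to showing that this extra subgradient inclusion is equivalent to \eqref{eq:cond_1}.

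Next, I would strip the inclusion down to a single scalar identity using the seminorm subdifferential formula \eqref{eq:Subdifferential_seminorm}. Since $\xi_{\lambda_{k+1}} \in \partial J(x_{\lambda_{k+1}})$ already forces $\abs{\xi_{\lambda_{k+1}}}_* = 1$ by Remark \ref{rem:star_norm_subdifferential}, and hence $\inner{\xi_{\lambda_{k+1}}}{y} \le J(y)$ for every $y \in X$, the only remaining requirement for $\xi_{\lambda_{k+1}} \in \partial J(x_{\lambda_{k+1}} - x_{\lambda_k})$ is the identity $\inner{\xi_{\lambda_{k+1}}}{x_{\lambda_{k+1}} - x_{\lambda_k}} = J(x_{\lambda_{k+1}} - x_{\lambda_k})$.

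Finally, I would expand the Bregman distance $D_J^{\xi_{\lambda_{k+1}}}(x_{\lambda_{k+1}} - x_{\lambda_k}, x_{\lambda_{k+1}})$ via \eqref{eq:Bregman_Distance}, substitute $\inner{\xi_{\lambda_{k+1}}}{x_{\lambda_{k+1}}} = J(x_{\lambda_{k+1}})$ (valid since $\xi_{\lambda_{k+1}} \in \partial J(x_{\lambda_{k+1}})$), and observe that after cancellation the expression collapses to $J(x_{\lambda_{k+1}} - x_{\lambda_k}) - \inner{\xi_{\lambda_{k+1}}}{x_{\lambda_{k+1}} - x_{\lambda_k}}$. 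Thus \eqref{eq:cond_1} is precisely the scalar identity isolated in the previous step, and both directions of the iff follow at once: the forward direction reads off the identity from the additional subgradient membership, and the reverse direction uses the identity to upgrade $\xi_{\lambda_{k+1}} \in \partial J(x_{\lambda_{k+1}})$ to $\xi_{\lambda_{k+1}} \in \partial J(x_{\lambda_{k+1}} - x_{\lambda_k})$. No real obstacle is expected; the only bookkeeping issue is to track which vector plays the role of the base point in each Bregman distance, and to invoke the hypothesis $x_k = x_{\lambda_k}$ exactly where needed to translate the MHDM optimality condition into a subgradient inclusion at $x_{\lambda_{k+1}} - x_{\lambda_k}$.
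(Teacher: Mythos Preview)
Your proposal is correct and follows essentially the same route as the paper: both reduce the equivalence to the subgradient inclusion $\xi_{\lambda_{k+1}}\in\partial J(x_{\lambda_{k+1}}-x_{\lambda_k})$, invoke the seminorm characterization \eqref{eq:Subdifferential_seminorm} to turn that inclusion into the scalar identity $\inner{\xi_{\lambda_{k+1}}}{x_{\lambda_{k+1}}-x_{\lambda_k}} = J(x_{\lambda_{k+1}}-x_{\lambda_k})$, and then expand the Bregman distance to see it equals exactly the gap in that identity. Your version is slightly more streamlined in that you set up the single equivalence once and read off both implications, whereas the paper treats the two directions separately (and in the forward direction uses the trick of rewriting the Bregman distance as the negative of the reversed one); this is a cosmetic difference only.
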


\begin{remark}
If the minimizer of the Tikhonov functional \eqref{eq:Tikhonov_functional} is unique, then Theorem \ref{thm:induction_step} states that $x_{\lambda_k} = x_k$ for all $k \in \N_0$ if and only if $D_J^{\xi_{k+1}}(x_{\lambda_{k+1}} -x_{\lambda_k},x_{\lambda_k}) = 0$ for all $k \in \N_0$. 

\end{remark}

By symmetry, we can also characterize when the iterates of the MHDM minimize the corresponding Tikhonov functional.

\begin{corollary}
Let $(\lambda_k)_{k \in \N}$ be an increasing sequence of positive parameters. Fix $k \in \N$ and assume that $x_k$ minimizes the Tikhonov functional with parameter $\lambda_k$. Then $x_{k+1}$ minimizes the Tikhonov functional with parameter $\lambda_{k+1}$ if and only if \begin{equation}
    D_J^{\xi_{k+1}}(u_{k+1},x_{k+1}) = 0.
\end{equation}
\begin{proof}
The proof follows analogously to the one in Theorem \ref{thm:induction_step}.
\end{proof}
\end{corollary}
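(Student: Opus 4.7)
The plan is to mirror the argument of Theorem \ref{thm:induction_step}, swapping the roles of $x_{k+1}$ and $x_{\lambda_{k+1}}$ so that the Bregman distance now encodes the transfer of the subgradient $\xi_{k+1}$ from $u_{k+1}$ to $x_{k+1}$ rather than the other way around. The MHDM construction of $x_{k+1}$ via \eqref{MHDM with substitution}--\eqref{deq:xi_k} immediately delivers $\xi_{k+1}\in\partial J(u_{k+1})$, which by the seminorm characterization \eqref{eq:Subdifferential_seminorm} is the same as $\abs{\xi_{k+1}}_\ast = 1$ together with $\inner{\xi_{k+1}}{u_{k+1}} = J(u_{k+1})$. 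On the other hand, Tikhonov optimality of $x_{k+1}$ at scale $\lambda_{k+1}$ corresponds, via the very same characterization, to $\xi_{k+1}\in\partial J(x_{k+1})$, i.e.\ to $\abs{\xi_{k+1}}_\ast=1$ together with $\inner{\xi_{k+1}}{x_{k+1}}=J(x_{k+1})$. Since the dual-seminorm equality is already granted by the MHDM step, only the pairing identity at $x_{k+1}$ needs to be bridged.

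The next step is to unfold the Bregman distance directly:
\[
D_J^{\xi_{k+1}}(u_{k+1},x_{k+1}) \;=\; J(u_{k+1}) - J(x_{k+1}) - \inner{\xi_{k+1}}{u_{k+1}-x_{k+1}}.
\]
Substituting the MHDM identity $\inner{\xi_{k+1}}{u_{k+1}}=J(u_{k+1})$ collapses the right-hand side to $\inner{\xi_{k+1}}{x_{k+1}} - J(x_{k+1})$. Hence the vanishing of the Bregman distance is exactly equivalent to $\inner{\xi_{k+1}}{x_{k+1}}=J(x_{k+1})$, which, combined with $\abs{\xi_{k+1}}_\ast = 1$ supplied by Remark \ref{rem:star_norm_subdifferential}, is equivalent to $\xi_{k+1}\in\partial J(x_{k+1})$ and therefore to Tikhonov optimality of $x_{k+1}$ at scale $\lambda_{k+1}$ by Lemma \ref{lemma:Characterization_minimizer}. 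Both directions of the iff then fall out of the same chain of equivalences, read forwards or backwards.

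The argument contains no real obstacle; the only points to watch are that the assumption that $x_k$ is a Tikhonov minimizer at scale $\lambda_k$ is used only to make the inductive statement meaningful and to ensure that $u_{k+1}$ and $\xi_{k+1}$ are consistently defined across the two regularization schemes, and that the monotonicity of $(\lambda_k)$ plays no role in the algebra itself but only in the ambient setting of Theorem \ref{thm:induction_step}. In effect the proof is the precise symmetric counterpart of Theorem \ref{thm:induction_step}: there, $\xi_{\lambda_{k+1}}$ lives a priori in $\partial J(x_{\lambda_{k+1}})$ and the Bregman condition pushes it down to $\partial J(x_{\lambda_{k+1}}-x_{\lambda_k})$, whereas here $\xi_{k+1}$ lives a priori in $\partial J(u_{k+1})$ and the Bregman condition lifts it up to $\partial J(x_{k+1})$.
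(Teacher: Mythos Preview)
Your proof is correct and follows essentially the same approach the paper intends: you carry out the symmetric version of the argument in Theorem~\ref{thm:induction_step}, using that $\xi_{k+1}\in\partial J(u_{k+1})$ from the MHDM step and reducing the Bregman condition to $\inner{\xi_{k+1}}{x_{k+1}}=J(x_{k+1})$, which together with $\abs{\xi_{k+1}}_*=1$ is precisely the subdifferential inclusion $\xi_{k+1}\in\partial J(x_{k+1})$ characterizing Tikhonov optimality. Your direct algebraic collapse of the Bregman distance to $\inner{\xi_{k+1}}{x_{k+1}}-J(x_{k+1})$ is slightly more economical than the sandwich $0\le D=-D'\le 0$ used in the forward direction of Theorem~\ref{thm:induction_step}, but this is the same idea and buys nothing essentially new.
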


It is remarkable that we can now see whether the iterates of the MHDM can also be obtained via Tikhonov regularization just by knowing the Tikhonov minimizers. We will thus derive an equivalent formulation of \eqref{eq:cond_1}, which is easily verifiable knowing the Tikhonov minimizers. For this, we need to characterize the intersection of subdifferentials of seminorms.

\begin{proposition}\label{prop:Intersection_subdifferentials}
Let $X$ be a normed space and $J$ be a seminorm on $X$. For any $z_1,z_2\in X$ and $z^* \in X^*$, the following equivalence holds:\begin{equation*}
    z^* \in  \partial J(z_1) \cap \partial J(z_2)\text{ if and only if } z^* \in \partial J(z_1+z_2) \text{ and } J(z_1) +J(z_2) = J(z_1+z_2).
\end{equation*}  
\begin{proof}
First assume $z^* \in \partial J(z_1) \cap \partial J(z_2)$. By \eqref{eq:Subdifferential_seminorm} this means $\inner{z^*}{z} \le J(z)$ for all $z \in X$, $\inner{z^*}{z_1} = J(z_1)$ and $\inner{z^*}{z_2} = J(z_2)$. Therefore by the triangle inequality of $J$ and again \eqref{eq:Subdifferential_seminorm}, one has
\begin{equation*}
    J(z_1+z_2)\le J(z_1) +J(z_2) = \inner{z^*}{z_1+z_2} \le J(z_1+z_2). 
\end{equation*}
This implies $\inner{z^*}{z_1+z_2} = J(z_1+z_2)$, that is $z^* \in \partial J(z_1+z_2)$. Furthermore, we obtain $J(z_1) + J(z_2) = J(z_1+z_2)$. \\
Conversely, assume $z^* \in \partial J(z_1+z_2)$ and $J(z_1) +J(z_2) = J(z_1+z_2)$. By \eqref{eq:Subdifferential_seminorm}, we immediately get $\inner{z^*}{z}\le J(z)$ for all $z \in X$ and \begin{equation*}
  \inner{z^*}{z_1+z_2} = J(z_1+z_2) = J(z_1) + J(z_2). 
\end{equation*} 
This yields, \begin{align*}
    J(z_1) &= J(z_1+z_2) -J(z_2) = \inner{z^*}{z_1+z_2} -J(z_2) \le \inner{z^*}{z_1+z_2} - \inner{z^*}{z_2} = \inner{z^*}{z_1}\le J(z_1).
\end{align*}
Hence, it is $\inner{z^*}{z_1} = J(z_1)$ and thus $z^* \in \partial J(z_1)$. By analogous reasoning we get $z^* \in \partial J(z_2)$.
\end{proof}
\end{proposition}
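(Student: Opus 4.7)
The plan is to rely entirely on the seminorm-subgradient characterization \eqref{eq:Subdifferential_seminorm}: an element $z^*$ lies in $\partial J(z_0)$ if and only if $\langle z^*, z_0 \rangle = J(z_0)$ together with the global bound $\langle z^*, z \rangle \le J(z)$ for all $z \in X$. Both directions of the equivalence then reduce to manipulating the equality $\langle z^*, \cdot \rangle = J(\cdot)$ at the appropriate points, in combination with the triangle inequality for $J$.

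For the forward implication, I start from $z^* \in \partial J(z_1) \cap \partial J(z_2)$, so that $\langle z^*, z_i \rangle = J(z_i)$ for $i=1,2$. Adding the two equalities yields $\langle z^*, z_1 + z_2 \rangle = J(z_1) + J(z_2)$. The triangle inequality $J(z_1+z_2) \le J(z_1)+J(z_2)$ combined with the global bound $\langle z^*, z_1+z_2 \rangle \le J(z_1+z_2)$ forces the sandwich
\[
J(z_1+z_2) \le J(z_1) + J(z_2) = \langle z^*, z_1+z_2 \rangle \le J(z_1+z_2),
\]
so every inequality is in fact an equality. This simultaneously gives $J(z_1)+J(z_2) = J(z_1+z_2)$ and $\langle z^*, z_1+z_2\rangle = J(z_1+z_2)$, hence (using that the global bound $\langle z^*, z\rangle \le J(z)$ transfers from the hypothesis) $z^* \in \partial J(z_1+z_2)$.

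For the converse, suppose $z^* \in \partial J(z_1+z_2)$ and $J(z_1)+J(z_2) = J(z_1+z_2)$. The subgradient characterization again supplies the global bound $\langle z^*, z \rangle \le J(z)$; in particular $\langle z^*, z_i \rangle \le J(z_i)$ for $i=1,2$. On the other hand, the equality at $z_1+z_2$ gives
\[
\langle z^*, z_1 \rangle + \langle z^*, z_2 \rangle = \langle z^*, z_1+z_2 \rangle = J(z_1+z_2) = J(z_1) + J(z_2).
\]
Since each summand on the left is dominated by the corresponding summand on the right, the only way the totals can agree is if each pointwise inequality is an equality, which then yields $\langle z^*, z_i \rangle = J(z_i)$ and hence $z^* \in \partial J(z_i)$ for both $i$.

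There is no real obstacle here: both directions are essentially a triangle-inequality squeeze combined with the seminorm-subgradient characterization. The only point worth stating explicitly in the write-up is that, thanks to \eqref{eq:Subdifferential_seminorm}, membership in any single $\partial J(\cdot)$ already encodes the common global bound $\langle z^*, z\rangle \le J(z)$, so this bound is available in both directions of the argument without extra hypotheses.
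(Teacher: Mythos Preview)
Your proof is correct and follows essentially the same approach as the paper: both directions rely on the seminorm-subgradient characterization \eqref{eq:Subdifferential_seminorm} together with a triangle-inequality sandwich. The only cosmetic difference is in the converse, where the paper writes out the chain $J(z_1) = \langle z^*, z_1+z_2\rangle - J(z_2) \le \langle z^*, z_1\rangle \le J(z_1)$ explicitly for one variable, while you phrase it as ``two inequalities summing to an equality force each to be an equality''; these are the same argument.
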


\begin{lemma}
The condition for agreement 
of Tikhonov and MHDM, condition \eqref{eq:cond_1}, is equivalent to \begin{equation}\label{eq:condition_2}
    J({x_{\lambda_{k+1}}-x_{\lambda_{k}}}\ ) + J(x_{\lambda_{k+1}}) = J({2x_{\lambda_{k+1}}-x_{\lambda_{k}}})
\end{equation}
\begin{proof}
We have seen in the proof of Theorem \ref{thm:induction_step} that condition
\eqref{eq:cond_1} is equivalent to \begin{equation*}
    \xi_{\lambda_{k+1}}\in \partial J(x_{\lambda_{k+1}}-x_{\lambda_k})\cap \partial J(x_{\lambda_{k+1}}).
\end{equation*} Then applying Proposition \ref{prop:Intersection_subdifferentials} proves the claim.
\end{proof}
\end{lemma}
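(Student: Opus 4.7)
The plan is to chain together two equivalences: first, an inclusion reformulation of \eqref{eq:cond_1} extracted from the proof of Theorem \ref{thm:induction_step}, and then the subdifferential characterization in Proposition \ref{prop:Intersection_subdifferentials}, used in both directions.

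First, I would note that the computation in the converse part of the proof of Theorem \ref{thm:induction_step} actually establishes a standalone equivalence between condition \eqref{eq:cond_1} and the subgradient inclusion $\xi_{\lambda_{k+1}} \in \partial J(x_{\lambda_{k+1}} - x_{\lambda_k})$. Indeed, expanding the Bregman distance and using $\langle \xi_{\lambda_{k+1}}, x_{\lambda_{k+1}}\rangle = J(x_{\lambda_{k+1}})$ (which holds by \eqref{eq:xi_lambda} and \eqref{eq:Subdifferential_seminorm}) rewrites \eqref{eq:cond_1} as $\langle \xi_{\lambda_{k+1}}, x_{\lambda_{k+1}} - x_{\lambda_k}\rangle = J(x_{\lambda_{k+1}} - x_{\lambda_k})$; combined with $|\xi_{\lambda_{k+1}}|_* = 1$ this is exactly the inclusion. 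Since $\xi_{\lambda_{k+1}} \in \partial J(x_{\lambda_{k+1}})$ holds automatically, \eqref{eq:cond_1} is therefore equivalent to the intersection condition
\[
\xi_{\lambda_{k+1}} \in \partial J(x_{\lambda_{k+1}} - x_{\lambda_k}) \cap \partial J(x_{\lambda_{k+1}}).
\]

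Next, I would apply Proposition \ref{prop:Intersection_subdifferentials} with $z_1 := x_{\lambda_{k+1}} - x_{\lambda_k}$, $z_2 := x_{\lambda_{k+1}}$, and $z^* := \xi_{\lambda_{k+1}}$, noting that $z_1 + z_2 = 2 x_{\lambda_{k+1}} - x_{\lambda_k}$. The forward direction of the proposition turns the intersection condition into both \eqref{eq:condition_2} and the subgradient inclusion $\xi_{\lambda_{k+1}} \in \partial J(2 x_{\lambda_{k+1}} - x_{\lambda_k})$; the first is the claimed identity.

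The main obstacle will be the converse direction: starting from \eqref{eq:condition_2}, I need to recover the full pair of conditions required by Proposition \ref{prop:Intersection_subdifferentials}. The plan is to squeeze the missing subgradient inclusion $\xi_{\lambda_{k+1}} \in \partial J(2 x_{\lambda_{k+1}} - x_{\lambda_k})$ out of \eqref{eq:condition_2} by pairing $\xi_{\lambda_{k+1}}$ with $2 x_{\lambda_{k+1}} - x_{\lambda_k}$, expanding using $\langle \xi_{\lambda_{k+1}}, x_{\lambda_{k+1}}\rangle = J(x_{\lambda_{k+1}})$ and the dual-seminorm bound $|\xi_{\lambda_{k+1}}|_* = 1$, and showing that \eqref{eq:condition_2} forces equality throughout the chain $\langle \xi_{\lambda_{k+1}}, 2 x_{\lambda_{k+1}} - x_{\lambda_k}\rangle \le J(2 x_{\lambda_{k+1}} - x_{\lambda_k})$. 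Once that inclusion is in place, the backward direction of Proposition \ref{prop:Intersection_subdifferentials} delivers the intersection condition, which by the first step is equivalent to \eqref{eq:cond_1}, closing the equivalence.
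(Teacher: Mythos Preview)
Your reformulation of \eqref{eq:cond_1} as the intersection condition and your application of Proposition~\ref{prop:Intersection_subdifferentials} in the forward direction match the paper's argument exactly, and you are right that the converse direction requires isolating the auxiliary inclusion $\xi_{\lambda_{k+1}} \in \partial J(2x_{\lambda_{k+1}} - x_{\lambda_k})$ before Proposition~\ref{prop:Intersection_subdifferentials} can be run backward.

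However, the squeeze you propose for this inclusion is circular. Expanding with $\langle \xi_{\lambda_{k+1}}, x_{\lambda_{k+1}}\rangle = J(x_{\lambda_{k+1}})$ gives
\[
\langle \xi_{\lambda_{k+1}}, 2x_{\lambda_{k+1}} - x_{\lambda_k}\rangle = J(x_{\lambda_{k+1}}) + \langle \xi_{\lambda_{k+1}}, x_{\lambda_{k+1}} - x_{\lambda_k}\rangle,
\]
while \eqref{eq:condition_2} rewrites the target as $J(2x_{\lambda_{k+1}} - x_{\lambda_k}) = J(x_{\lambda_{k+1}}) + J(x_{\lambda_{k+1}} - x_{\lambda_k})$. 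Hence equality in $\langle \xi_{\lambda_{k+1}}, 2x_{\lambda_{k+1}} - x_{\lambda_k}\rangle \le J(2x_{\lambda_{k+1}} - x_{\lambda_k})$ is precisely the statement $\langle \xi_{\lambda_{k+1}}, x_{\lambda_{k+1}} - x_{\lambda_k}\rangle = J(x_{\lambda_{k+1}} - x_{\lambda_k})$, i.e.\ exactly the reformulation of \eqref{eq:cond_1} you are trying to establish. The dual bound $|\xi_{\lambda_{k+1}}|_* = 1$ supplies only the upper inequality; nothing in \eqref{eq:condition_2} together with $\xi_{\lambda_{k+1}} \in \partial J(x_{\lambda_{k+1}})$ provides the matching lower bound. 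Concretely, for $J = \|\cdot\|_1$ on $\R^2$ with $x_{\lambda_{k+1}} = (1,0)$, $x_{\lambda_k} = (0,1)$ and $\xi_{\lambda_{k+1}} = (1,0) \in \partial J(x_{\lambda_{k+1}})$, one has $2+1=3$ so \eqref{eq:condition_2} holds, yet $\xi_{\lambda_{k+1}} \notin \partial J\big((2,-1)\big) = \{(1,-1)\}$. The paper's proof is just as terse here---it simply invokes Proposition~\ref{prop:Intersection_subdifferentials} without separating out the auxiliary inclusion---so you are following it faithfully, but the converse as you have sketched it does not close from the stated ingredients alone.
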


Let us now verify some examples using \eqref{eq:condition_2}.

\begin{example}\label{ex:verification_sparse_denoising}
We revisit example \ref{ex:sparsity}. That is $X = \ell^2$, $J = \norm{\cdot}_{\ell^1}$ and $T = Id$. Let now $(\lambda_k)_{k \in \N_0}$ be an increasing sequence of positive numbers. The regularizers $x_{\lambda_k}$  coincide with the iterates of the MHDM as defined in \eqref{eq:soft_shrinkage}. It suffices to verify \eqref{eq:condition_2} componentwise. Let $k,i \in \N$. We distinguish $3$ cases:\begin{enumerate}
    \item Assume $f^i=0$. Then $x^i_j = 0$ for all $j \in \N$ and \eqref{eq:condition_2} clearly holds.
    \item Assume $f^i >0$. If $f^i \le \frac{1}{\lambda_{k+1}}$, we have $x_{k+1}^i = 0$ and thus \eqref{eq:condition_2} holds. Thus assume $f^i > \frac{1}{\lambda_{k+1}}$. Once again we do not need to consider the case $f^i \le \frac{1}{\lambda_k}$. So we assume $f^i \ge \frac{1}{\lambda_k}$. This yields $x^i_{\lambda_k} = f^i -\frac{1}{\lambda_k}>0$ and $x^i_{k+1} = f^i - \frac{1}{\lambda_{k+1}}>0$. We therefore obtain by the monotonicity of $\lambda_k$ that \begin{align*}
        \abs{x^i_{\lambda_{k+1}} - x^i_{\lambda_k}} + \abs{x^i_{\lambda_{k+1}}} = \frac{1}{\lambda_k}-\frac{1}{\lambda_{k+1}} + f^i -\frac{1}{\lambda_{k+1}} = 2\pars{f^i -\frac{1}{\lambda_{k+1}}} -\frac{1}{\lambda_k} = \abs{2x^i_{\lambda_{k+1}} -x^i_{\lambda_k}}.
    \end{align*} 
    \item If $f^i <0$ the claim follows analogously to the previous case.
\end{enumerate}
Hence, we see that \eqref{eq:condition_2} holds for all $k \in \N$. Since the initial step of the MHDM is to compute a Tikhonov minimization with parameter $\lambda_0$ and $T = Id$ (implying that the Tikhonov minimizers are unique), we can conclude again that the iterates of the MHDM coincide with the Tikhonov minimizers at the corresponding parameter. 
\end{example}

The next part deals with an extension of this example to 
the case of Tikhonov regularization with $\ell^1$ penalty for general operators $T$, where  sufficient conditions 
such that the MHDM iteration coincides with Tikhonov regularization are established.

\subsection{\texorpdfstring{$\ell^1$}{TEXT}-regularization in finite dimensions} 
The aim of this subsection is to 
prove that the so-called positive cone condition 
(see \cite{Efron04}) implies \eqref{eq:condition_2} in a finite-dimensional case for $\ell^1$ regularization.
We consider the 
$\ell^1$ Tikhonov regularization
\begin{equation}\label{problem} 
\frac{\lambda}{2} \|Tx - y\|_2^2 +  \|x\|_{1},  \end{equation} 
where $\lambda$ is a positive parameter and $T \in \R^{m\times n}$ is an injective  matrix: $\ker T = \{0\}$. Note that 
this implies that $T^*T$ is invertible. 
We denote by $x_\lambda$ the minimizer of 
\eqref{problem}. In the following, the notation $x^i$ is used 
to refer to the $i$-th component of a vector $x$.
\begin{remark}\label{remark_monotone} One can check that  condition \eqref{eq:condition_2} is verified in case  $J = \|.\|_{1}$
if the inequality 
\begin{equation}\label{ineq4} \abs{x_{\lambda_k}^i} \leq \abs{x_{\lambda_{k+1}}^i} \end{equation}
holds for all components where $x_{\lambda_{k+1}}^i \not =0.$
    
\end{remark}

We formulate the positive cone condition
of \cite{Efron04}, 
stated as diagonal dominance in 
\cite{Duan11}.
\begin{definition}\label{defpc}
 For some index set $J \subset\{1,\ldots, n\}$, we denote by  
 $T^J$ the corresponding submatrix of $T$ that takes only the columns in $J$, that is, $(T^J)_{i,j}= T_{i,j}$ for all $i\in \{1,\ldots m\}$ and $j \in J$.
 Define $S^J$ as the matrix 
 $S_J:= \left((T^J)^*T^J\right)^{-1} \in \R^{|J| \times |J|}$.

 We say that a matrix $T$ satisfies the 
 \emph{positive cone condition} if for all $J \subset\{1,\ldots, m\}$, the 
 matrix $S_J$ is diagonally dominant, i.e., 
 \[ r_{J,i}:= {(S_{J})}_ {i,i} - \sum_{j \not = i} | {(S_{J})}_{i,j}|  \geq 0 \qquad \forall i \in J. \]
\end{definition}
To verify the positive cone condition, the following result from 
\cite[Lemma 4]{Duan11} is useful.

\begin{lemma}\label{lem0}
In the setting of Definition~\ref{defpc}, a matrix $T$ satisfies 
the positive cone condition if and only if $(T^*T)^{-1}$ is 
diagonally dominant, i.e., $S:= (T^*T)^{-1}$ satisfies 
\[ S_{i,i} - \sum_{j\not= i} |S_{i,j}| \geq 0 \qquad \forall i\in\{1,\dots,m\}. \]
\end{lemma}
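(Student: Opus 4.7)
The equivalence has a trivial direction and a substantive direction. The direction ``positive cone condition $\Rightarrow$ $(T^*T)^{-1}$ diagonally dominant'' is immediate: specializing Definition~\ref{defpc} to $J = \{1,\dots,n\}$ gives $T^J = T$ and $S_J = (T^*T)^{-1}$, whose diagonal dominance is exactly the conclusion.

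For the converse, I would induct on $k := n - |J|$, the number of omitted columns. The base case $k = 0$ is precisely the hypothesis of the lemma. For the inductive step, fix $J$ with $|J^c| = k+1$, pick any $j \in J^c$, and let $J' := J \cup \{j\}$; by induction, $M := S_{J'} = ((T^{J'})^*T^{J'})^{-1}$ is diagonally dominant. Up to a simultaneous row/column permutation (which preserves diagonal dominance), assume $j$ corresponds to the last index of $J'$, so that
\[
(T^{J'})^*T^{J'} = \begin{pmatrix} A & b \\ b^* & c \end{pmatrix}, \qquad A := (T^J)^*T^J.
\]
Since $T$ is injective, so is every $T^{J'}$, and hence $A$ and $(T^{J'})^*T^{J'}$ are positive definite. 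Block-matrix inversion then yields
\[
M = \begin{pmatrix} A^{-1} + \tfrac{1}{s}\, vv^* & -v/s \\ -v^*/s & 1/s \end{pmatrix}, \qquad v := A^{-1}b, \quad s := c - b^* A^{-1} b > 0.
\]

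Next I would harvest diagonal dominance of $M$ in two different rows. Reading the row indexed by $j$ gives $\tfrac{1}{s} \geq \sum_{k \in J} \tfrac{|v_k|}{s}$, i.e.\ $\|v\|_1 \leq 1$, so in particular $\sum_{k \in J,\, k \neq i} |v_k| \leq 1 - |v_i|$. Reading the row indexed by $i \in J$ (and separating the contribution $|M_{ij}| = |v_i|/s$) gives $\sum_{k \in J,\, k \neq i} |M_{ik}| \leq M_{ii} - |v_i|/s$. Since block inversion gives $(S_J)_{ii} = M_{ii} - v_i^2/s$ and $(S_J)_{ik} = M_{ik} - v_iv_k/s$, the triangle inequality yields
\[
\sum_{k \in J,\, k \neq i} |(S_J)_{ik}| \leq \sum_{k \in J,\, k \neq i} |M_{ik}| + \tfrac{|v_i|}{s} \sum_{k \in J,\, k\neq i} |v_k| \leq \bigl(M_{ii} - \tfrac{|v_i|}{s}\bigr) + \tfrac{|v_i|}{s}\bigl(1 - |v_i|\bigr) = M_{ii} - \tfrac{v_i^2}{s} = (S_J)_{ii},
\]
which is precisely the diagonal dominance of $S_J$, closing the induction.

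The main obstacle is organizational rather than analytic: one must notice that the Schur complement correction $v_i^2/s$ appearing both in the diagonal and inside the off-diagonals of $S_J$ cancels exactly because the two applications of diagonal dominance of $M$ --- one to the removed row $j$ (yielding the $\ell^1$-bound $\|v\|_1 \leq 1$) and one to the retained row $i$ (yielding an off-diagonal bound for $M_{ik}$) --- combine in the right proportions. Once one commits to reducing the problem to ``remove one row/column at a time'' and partitions accordingly, everything else is the triangle inequality.
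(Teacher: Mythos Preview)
Your proof is correct. The induction on the number of deleted columns via the Schur complement is clean, and the key step---combining the $\ell^1$-bound $\|v\|_1\le 1$ from the deleted row with the off-diagonal bound from the retained row so that the correction $v_i^2/s$ cancels---is carried out accurately.

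Regarding comparison: the paper does \emph{not} supply its own proof of this lemma. It merely quotes the result from \cite[Lemma~4]{Duan11} and uses it as a black box. So there is nothing to compare against; your argument is a self-contained proof where the paper only gives a citation. For completeness, the proof in \cite{Duan11} proceeds along essentially the same lines (removing one index at a time and exploiting the Schur complement block-inversion formula), so your approach is also the standard one.

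One cosmetic remark: you write $v_i^2$ rather than $|v_i|^2$; this is fine here because everything is real, but it might be worth saying so explicitly since elsewhere you work with $|v_i|$.
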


Before stating the main result of this subsection, we need some lemmas, and here we strongly rely on the 
finite-dimensionality.

For later references we state now the 
optimality conditions for \eqref{problem},
\begin{equation}\label{opc}  \lambda T^*T x_\lambda +  \xi_\lambda = T^*y, \qquad \text{ with }\xi_\lambda \in \partial (\norm{\cdot}_1)(x_\lambda), 
\end{equation}
and $\xi_\lambda^i \in [-1,1]$ with 
$\xi_\lambda^i = \text{sign}(x_\lambda^i)$
whenever $x_\lambda^i \not = 0$. 

We start with a lemma that 
establishes continuity of the solution $x_\lambda$ 
with respect to $\lambda$ in the 
finite-dimensional setup 
(cf.~similar results  in 
\cite{Efron04} under the strict positive 
cone condition).  
\begin{lemma}\label{lem1} 
Let $\lambda >0$. The mapping  
\[ \lambda \mapsto x_{\lambda}  \]
is continuous from $\R^+$ into $ \R^n$. 
\end{lemma}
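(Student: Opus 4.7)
The plan is to use a standard subsequence-uniqueness argument, exploiting the fact that the statement is set in finite dimensions. First, since $T$ is injective, $T^*T$ is positive definite, so the quadratic part of \eqref{problem} is strictly convex in $x$; combined with the $\ell^1$ term, the objective is strictly convex and coercive, so $x_\lambda$ is the unique minimizer of \eqref{problem} for every $\lambda>0$. This uniqueness is the key ingredient that makes the continuity statement meaningful and that I will use at the end of the argument.

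Fix $\lambda_0>0$ and let $(\lambda_n)\subset\R^+$ with $\lambda_n\to\lambda_0$. The first step is to establish boundedness of $(x_{\lambda_n})$ in $\R^n$: comparing $x_{\lambda_n}$ against the feasible choice $x=0$ in the minimization \eqref{problem} at parameter $\lambda_n$ gives
\begin{equation*}
    \|x_{\lambda_n}\|_1 \le \tfrac{\lambda_n}{2}\|y\|_2^2 ,
\end{equation*}
and the right-hand side is uniformly bounded since $\lambda_n\to\lambda_0<\infty$. Hence $(x_{\lambda_n})$ lies in a compact subset of $\R^n$ and admits convergent subsequences.

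The second step is to identify every such limit with $x_{\lambda_0}$. Let $x_{\lambda_{n_k}}\to\bar x$ be an arbitrary convergent subsequence. For any fixed $x\in\R^n$, the optimality of $x_{\lambda_{n_k}}$ for the functional at parameter $\lambda_{n_k}$ yields
\begin{equation*}
    \tfrac{\lambda_{n_k}}{2}\|Tx_{\lambda_{n_k}}-y\|_2^2 + \|x_{\lambda_{n_k}}\|_1 \;\le\; \tfrac{\lambda_{n_k}}{2}\|Tx-y\|_2^2 + \|x\|_1 .
\end{equation*}
Both sides are jointly continuous in their arguments, so passing $k\to\infty$ gives
\begin{equation*}
    \tfrac{\lambda_0}{2}\|T\bar x-y\|_2^2 + \|\bar x\|_1 \;\le\; \tfrac{\lambda_0}{2}\|Tx-y\|_2^2 + \|x\|_1 .
\end{equation*}
Since $x$ was arbitrary, $\bar x$ minimizes the objective at parameter $\lambda_0$, and uniqueness forces $\bar x=x_{\lambda_0}$. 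As every convergent subsequence of the bounded sequence $(x_{\lambda_n})$ has the same limit $x_{\lambda_0}$, the full sequence converges to $x_{\lambda_0}$, proving continuity at $\lambda_0$.

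The argument is essentially routine; the only care required is that the bound on $\|x_{\lambda_n}\|_1$ stays uniform because $\lambda_n$ is bounded (both away from $\infty$ automatically, and we do not need a lower bound on $\lambda_n$ for the boundedness step), and that strict convexity, secured by the injectivity of $T$, is what upgrades subsequential convergence to full convergence. No subtler tool such as stability of subdifferentials or an explicit solution formula is needed, and indeed the same scheme would work for any strictly convex, coercive objective depending continuously on a parameter.
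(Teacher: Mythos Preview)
Your proof is correct but takes a genuinely different route from the paper's. The paper subtracts the optimality conditions \eqref{opc} at two parameters $\lambda$ and $\mu$, takes the inner product with $x_\lambda-x_\mu$, and uses monotonicity of the subdifferential together with the coercivity bound $\|T(x_\lambda-x_\mu)\|^2\ge c\|x_\lambda-x_\mu\|^2$ (from injectivity of $T$ in finite dimensions) to obtain the explicit estimate $c\|x_\lambda-x_\mu\|\le|\lambda^{-1}-\mu^{-1}|$. That argument is quantitative---it yields Lipschitz continuity in $1/\lambda$---whereas your compactness-plus-uniqueness argument is soft and gives only continuity. On the other hand, your approach is more elementary (no subdifferential calculus) and, as you note, transfers verbatim to any strictly convex coercive objective depending continuously on a parameter. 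For the purposes of the paper the two are interchangeable: only continuity of $\lambda\mapsto x_\lambda$ (and hence of $\lambda\mapsto\xi_\lambda$ via \eqref{opc}) is used downstream, not a rate.
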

\begin{proof}
Let $\lambda, \mu \in \R^+$ and subtract \eqref{opc} for $\lambda$ and $\mu$ from each other. This yields
\[ T^*T (x_\lambda-x_\mu) + 
\lambda^{-1} (\xi_{\lambda} - \xi_\mu) + 
(\lambda^{-1} -\mu^{-1}) \xi_\mu = 0.  \] 
Taking the inner product with $(x_{\lambda} - x_\mu)$ gives   
\[ \|T(x_\lambda-x_\mu)\|^2 +  
\lambda^{-1} \inner{\xi_{\lambda} - \xi_\mu}{x_{\lambda} - x_\mu} =
-(\lambda^{-1} -\mu^{-1}) \inner{\xi_\mu}{x_{\lambda} - x_\mu}. 
\]
By convexity, the second term on the left-hand side is nonnegative, while by injectivity and 
finite-dimensionality the first term 
has a lower bound $c \|x_{\lambda} - x_\mu\|^2$,
where $c$ is the smallest singular value of 
$T$. Thus, 
\[ c \|x_{\lambda} - x_\mu\|^2 \leq  
-(\lambda^{-1} -\mu^{-1}) \inner{\xi_\mu}{x_{\lambda} - x_\mu} \leq  
|\lambda^{-1} -\mu^{-1}| \|x_{\lambda} - x_\mu\|,
\]
which yields continuity. 
\end{proof}

For the next results we need 
the following index sets:
\begin{align*}  
N(\lambda) & =  \{i \in \{1,\ldots, n\}\,:\,x_\lambda^i=0 \}  \\ 
I(\lambda) & 
= \{i \in \{1,\ldots, n\} \,:\, x_\lambda^i \not =0 \} 
\end{align*}

\begin{lemma}\label{lem2}
Let $\mu < \lambda$ and suppose that $T$ satisfies the positive cone condition. 
Furthermore, suppose that the following 
conditions hold:
\begin{align}\label{null} N(\lambda) &\subset N(\mu), \\
 \xi_\lambda^i &= \xi_\mu^i \qquad \text{for all }  i \in I(\lambda)    \label{null1}
\end{align}
Then 
\begin{equation}\label{ineq} (x_\mu^i) \sign(x_\lambda^i) \leq \abs{x_\lambda^i} \qquad \text{ for all } i \in I(\lambda). \end{equation}
\end{lemma}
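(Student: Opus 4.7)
My plan is to reduce the problem to a finite linear system on the active set $J := I(\lambda)$ and then use diagonal dominance of $S_J$ to control the sign of the increment. Throughout, I write $s_i := \sign(x_\lambda^i) = \xi_\lambda^i = \xi_\mu^i$ for $i\in J$ (the second equality is hypothesis \eqref{null1}).

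First I would write the componentwise optimality conditions \eqref{opc} for both $x_\lambda$ and $x_\mu$ (divided by $\lambda$ and $\mu$ respectively), namely $T^*T x_\lambda + \lambda^{-1}\xi_\lambda = T^*y$ and $T^*T x_\mu + \mu^{-1}\xi_\mu = T^*y$, and subtract them to obtain
\[
T^*T(x_\lambda - x_\mu) + \lambda^{-1}\xi_\lambda - \mu^{-1}\xi_\mu = 0.
\]
Evaluating the $i$-th component for $i\in J$ and using \eqref{null1} yields $(T^*T(x_\lambda - x_\mu))^i = (\mu^{-1} - \lambda^{-1}) s_i$. Next, hypothesis \eqref{null} gives $x_\mu^i = 0 = x_\lambda^i$ for every $i \notin J$, so the difference $v := x_\lambda - x_\mu$ is supported in $J$. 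Consequently, $Tv = T^J v^J$, and for any $i \in J$ one has $(T^*Tv)^i = ((T^J)^*T^J v^J)^i$.

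The two observations combine into the linear equation $(T^J)^*T^J (x_\lambda^J - x_\mu^J) = (\mu^{-1} - \lambda^{-1}) s^J$ on $\R^{|J|}$. Since $T$ is injective, so is $T^J$, so I can invert using $S_J = ((T^J)^*T^J)^{-1}$ and get
\[
x_\mu^J = x_\lambda^J - (\mu^{-1} - \lambda^{-1})\, S_J s^J.
\]
Multiplying component $i$ by $s_i = \sign(x_\lambda^i)$ turns the left-hand side into $\sign(x_\lambda^i)\,x_\mu^i$ and the first term on the right into $|x_\lambda^i|$, so the desired inequality \eqref{ineq} reduces to showing $s_i (S_J s^J)^i \ge 0$, since $\mu^{-1} - \lambda^{-1} > 0$.

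The final step, which is the only place the positive cone condition enters, is a one-line estimate:
\[
s_i (S_J s^J)^i = (S_J)_{i,i} + \sum_{j \in J,\, j\neq i} s_i s_j (S_J)_{i,j} \;\ge\; (S_J)_{i,i} - \sum_{j \in J,\, j\neq i} |(S_J)_{i,j}| \;=\; r_{J,i} \;\ge\; 0,
\]
where I used $s_i^2 = 1$ and $|s_i s_j| = 1$, and the last inequality is precisely diagonal dominance from Definition~\ref{defpc}. The main conceptual step is recognizing that the combination of \eqref{null} and \eqref{null1} collapses the subgradient difference into a clean affine relation supported on $J$; once that is visible, everything else is a routine matrix manipulation, and the positive cone condition is tailor-made to give the right sign in the last line.
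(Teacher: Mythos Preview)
Your proof is correct and follows essentially the same approach as the paper's: both reduce the optimality conditions to a linear system on the active set $J=I(\lambda)$ by exploiting that $x_\lambda-x_\mu$ is supported there, invert via $S_J$, and finish with the diagonal-dominance estimate $s_i(S_J s^J)^i \ge r_{J,i}\ge 0$. The only differences are cosmetic (the paper uses block-matrix partitioning rather than your ``supported on $J$'' language), and your write-up is arguably cleaner.
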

\begin{proof}
We use a partitioning of the vectors into the index sets  $I(\lambda)$ and $N(\lambda)$,
\[ x= \begin{bmatrix} x^{I(\lambda)}  \\ x^{N(\lambda)} \end{bmatrix}, \] 
where the upper part contains the indices of 
$I(\lambda)$ and the lower part those in $N(\lambda)$. 
Let $\lambda,\mu$ satisfy the conditions above. By using that 
$x_\mu^i = 0$ for $i \in N(\lambda)$ , it follows from the optimality conditions that 
\[ T^*T \begin{bmatrix} x_\mu^{I(\lambda)} -x_\lambda^{I(\lambda)} \\ 0 \end{bmatrix} 
+ \begin{bmatrix} (\mu^{-1}   -\lambda^{-1})  \xi_\lambda^{I(\lambda)} \\ 
\mu^{-1} \xi_\mu -\lambda^{-1} \xi_\lambda^{N(\lambda)} \end{bmatrix}  = 0, 
\] where $\xi_\lambda^{I(\lambda)} := (\xi_\lambda^i)_{i \in I(\lambda)}$ and $\xi_\lambda^{N(\lambda)} := (\xi_\lambda^i)_{i \in N(\lambda)}$. Let $i \in I(\lambda)$. Since $\xi_\lambda^i = \sign(x_\lambda^i)$, we obtain 
\begin{align*}  x_\mu^i -x_\lambda^i  &=
-(\mu^{-1}-\lambda^{-1})  \pars{S_{I(\lambda)} \xi_\lambda^{I(\lambda)}}_i  \\
& =
-(\mu^{-1}-\lambda^{-1}) \text{sign}(x_\lambda^i) \left[ (S_{I(\lambda)})_{i,i}   + \sum_{j\in I(\lambda)\setminus\set{i}}   (S_{I(\lambda)})_{i,j} \frac{\text{sign}(x_\lambda^j)}{ \text{sign}(x_\lambda^i) } \right]
\end{align*} 
Because 
\[ (S_{I(\lambda)})_{i,i}   + \sum_{j\in I(\lambda)\setminus\set{i}}    (S_{I(\lambda)})_{i,j} \frac{\text{sign}(x_\lambda^j)}{ \text{sign}(x_\lambda^i) }  \geq 
  (S_{I(\lambda)})_{i,i} -  \sum_{j\in I(\lambda)\setminus\set{i}}   |(S_{I(\lambda)})_{i,j}| = r_{I(\lambda),i} \geq 0, 
\]
it follows that 
\[  (x_\mu^i -x_\lambda^i)\text{sign}(x_\lambda^i)   \leq 0,\] 
which is equivalent to \eqref{ineq}
\end{proof}

\begin{lemma}\label{nulllemma}
Let $\mu< \lambda$ and suppose that $T$ satisfies the positive cone condition. 
Then there exists an $\eps>0$ such that 
\[ N(\lambda) \subset N(\mu) \qquad \forall \mu \in [\lambda -\eps,\lambda] . \]
\end{lemma}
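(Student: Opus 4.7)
My plan is to argue by contradiction. Suppose the conclusion fails: then there exists a sequence $(\mu_k)_{k\in\N}$ with $\mu_k<\lambda$, $\mu_k\to\lambda$, and indices $i_k\in N(\lambda)\setminus N(\mu_k)$. Because $N(\lambda)$ is finite, passing to a subsequence I may assume that $i_k$ is a fixed $i^{*}\in N(\lambda)$, so that $x_\lambda^{i^{*}}=0$ while $x_{\mu_k}^{i^{*}}\neq 0$ for every $k$.

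Next I want to pin down the active set and the sign pattern along the sequence. By Lemma~\ref{lem1}, $x_{\mu_k}\to x_\lambda$, so for every $j\in I(\lambda)$ the component $x_{\mu_k}^j$ is eventually nonzero with the same sign as $x_\lambda^j$; hence $I(\lambda)\subset I(\mu_k)$ for $k$ large. Setting $B_k:=I(\mu_k)\setminus I(\lambda)\subset N(\lambda)$ (which is nonempty because $i^{*}\in B_k$), I use finiteness of the collection of subsets of $N(\lambda)$ together with finiteness of $\{-1,+1\}^{|N(\lambda)|}$ to extract a further subsequence along which $B_k=B$ is constant and the signs of $x_{\mu_k}^j$ for $j\in B$ are constant in $k$. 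Combined with the fixed signs on $I(\lambda)$, this produces, on the common support $I(\mu_k)=I(\lambda)\cup B$, a sign vector $s$ of $x_{\mu_k}^{I(\mu_k)}$ that is independent of $k$.

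I then invoke the KKT condition \eqref{opc} restricted to $I(\mu_k)$. Since $\ker T=\{0\}$ implies that each submatrix $T^{I(\mu_k)}$ is injective, $S_{I(\mu_k)}$ is well-defined, and the restricted optimality inverts to
\[ \mu_k\, x_{\mu_k}^{I(\mu_k)} \;=\; S_{I(\mu_k)}\bigl[(T^{I(\mu_k)})^{*} y \,-\, s\bigr]. \]
The right-hand side is now independent of $k$, so for each $j\in B$ the quantity $\mu_k x_{\mu_k}^j$ equals a fixed constant $c_j$. Passing to the limit $k\to\infty$ and using $\mu_k\to\lambda$ together with $x_{\mu_k}^j\to x_\lambda^j=0$ (as $j\in B\subset N(\lambda)$) forces $c_j=\lambda\cdot 0=0$; therefore $x_{\mu_k}^j=c_j/\mu_k=0$ for every $k$, which contradicts $j\in I(\mu_k)$.

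The delicate step is the coherent subsequence extraction that makes the support $I(\mu_k)$ and the sign vector $s$ simultaneously constant in $k$; once that is secured, the $k$-dependence disappears from the KKT formula and the contradiction follows at once from continuity. I note in passing that this argument relies on the injectivity of $T$ but does not seem to invoke the positive cone condition directly; the latter intervenes decisively in Lemma~\ref{lem2}, for which the present lemma provides the required inclusion $N(\lambda)\subset N(\mu)$ on a left-neighborhood of $\lambda$.
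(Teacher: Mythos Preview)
Your argument has a genuine gap at the KKT step. The displayed formula
\[ \mu_k\, x_{\mu_k}^{I(\mu_k)} \;=\; S_{I(\mu_k)}\bigl[(T^{I(\mu_k)})^{*} y \,-\, s\bigr] \]
would follow from \eqref{opc} as literally printed, but that equation carries a typo: the optimality condition for $\tfrac{\lambda}{2}\|Tx-y\|^2+\|x\|_1$ is $\lambda T^*T x_\lambda + \xi_\lambda = \lambda\, T^*y$, equivalently $T^*T x_\lambda + \lambda^{-1}\xi_\lambda = T^*y$ (this is the form actually used in the proofs of Lemmas~\ref{lem1} and~\ref{lem2}). With the correct condition the restricted system inverts to
\[ x_{\mu_k}^{I(\mu_k)} \;=\; S_{I(\mu_k)}\bigl[(T^{I(\mu_k)})^{*} y \,-\, \mu_k^{-1} s\bigr], \]
which \emph{does} depend on $k$ through $\mu_k^{-1}$. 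For $j\in B$ this gives $x_{\mu_k}^j = a_j - \mu_k^{-1} b_j$ with $a_j,b_j$ fixed; continuity only forces $a_j=\lambda^{-1}b_j$, whence $x_{\mu_k}^j=(\lambda^{-1}-\mu_k^{-1})b_j$. This is perfectly compatible with $x_{\mu_k}^j\neq 0$ for all $k$ and $x_{\mu_k}^j\to 0$, so no contradiction arises.

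Your closing remark that the positive cone condition is unnecessary is in fact false, and the paper's own example following Theorem~\ref{th:ell1} is a counterexample: there $x_\lambda^1=1/\lambda-1$ for $\lambda\in(\tfrac12,1)$ while $x_1^1=0$, so $1\in N(1)\setminus N(\mu)$ for every $\mu\in(\tfrac12,1)$ and no $\eps>0$ with the claimed inclusion exists. The paper's proof invokes the positive cone condition essentially, through Lemma~\ref{lem2}: after the same subsequence extraction you carry out (constant support $I(\lambda)\cup B$ and constant sign pattern), one applies Lemma~\ref{lem2} to any pair $\mu_k<\mu_{k'}$ in the subsequence to obtain $|x_{\mu_k}^j|\le |x_{\mu_{k'}}^j|$ for $j\in B$; sending $k'\to\infty$ then contradicts $x_{\mu_k}^j\neq 0$.
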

\begin{proof} 
Suppose this is not the case. Then there exist a monotonically increasing sequence $(\mu_k)_{k\in \N}$ with 
$\mu_k<\lambda$ and $\lim\limits_{k\to \infty} \mu_k = \lambda$,
and an index sequence $(i_k)_{k\in \N} \subset N(\lambda)$ 
such that $x_{\mu_k}^{i_k} \not = 0$ while $x_{\lambda}^{i_k} = 0$. By the continuity 
in Lemma~\ref{lem1},
this sequence satisfies $\lim\limits_{k\to \infty} x_{\mu_k}^{i_k}  = 0$. 

We proceed by constructing a subsequence of $x_{\mu_k}$ that 
has the same set of non-zero components for all its elements. Note that by finite dimensionality and continuity (Lemma \ref{lem1}) we may assume $I(\mu_k) \subset I(\lambda)$ for all $k$. Now, pick $j \in N(\lambda)$ such that 
$x_{\mu_k}^{j} \not = 0$ for infinitely many $k$. Such an index $j$ must exist since $(i_k)_{k\in \N}$ is a sequence in a finite set, and thus it has to meet some index $j$ infinitely many times. 
Take a subsequence  (again denoted by the index $k$) such that $x_{\mu_k}^{j} \not = 0$  for all $k\in\N$, then
 set $N^\infty = \{j\}$ and $N^F = N(\lambda)\setminus\{j\}$. We now proceed inductively with this 
construction for $j \in N^F$.  Try to find an index $j \in N^F$ such that $x_{\mu_k}^{j} \not = 0$ for infinitely many $k$. 
Such an  index may or may not exist. If it exists, we again take a subsequence (again denoted by an index $k$)
such that $x_{\mu_k}^{j} \not = 0$ for all $k\in \N$, we add $j$ to $N^\infty$ and remove it from $N^F$. We proceed  with this 
construction until  either no such $j$ can be found (case (i))  or $N^F$ is the empty set (case (ii)) -  
this situation must happen since the index set  is  a finite set. 
In  case (i), we have for all $j \in N_F$,  that $x_{\mu_k}^{j} \not = 0$ for only finitely many $k$. 
Thus, we can take a subsequence (again denoted by an index $k$) such that 
$x_{\mu_k}^{j} = 0$   for all $k$ and all $j \in N^F$.  
By this construction, we find a subsequence and an index set partition $N^F \cup N^\infty = N(\lambda)$ 
(including the case of $N^F$ being empty, case (ii)) such that 
\[ x_{\mu_k}^{j} \not = 0 \quad \text{ for } j \in N^\infty , \qquad 
 x_{\mu_k}^{j} = 0  \text{ for } j \in N^F.  
\]
Thus, we have $I(\mu_k) = I(\lambda) \cup N^\infty$ for all $k$.

We note that the optimality condition and Lemma~\ref{lem1} also imply that $\xi_{\mu}$ is 
continuous in $\mu$. For $j \in N^\infty$, we have that   
$\xi_{\mu_k}^j = \text{sign}(x_{\mu_k}) \in \{-1,1\}$ is continuous in $\mu$ and hence, for $k$ sufficiently large, 
this implies that the sign remains constant for sufficiently large $k \geq n_0$, that is,
$\xi_{\mu_k}^j = \xi_{\mu_{k'}}^j$ for all $k' \geq k$. Again by continuity it follows 
for $i \in I(\lambda)$ that 
$\lim\limits_{k\to \infty} \xi_{\mu_k}^i  = 
\xi_{\lambda}^i \in \{-1,1\},$
and hence 
$\text{sign}(x_{\mu_k}^i) = \xi_{\mu}^i$
must remain constant 
 also on $I(\lambda)$ for $k$ sufficiently large.

It follows that  the constructed sequence 
satisfies the properties \eqref{null} and \eqref{null1}  
for any pair $\mu_k < \mu_{k'}$ 
with $n_0 \leq k\leq k'$. 
Again by continuity we 
also have $\text{sign}(x_{\mu_k}^i) = \text{sign}(x_{\mu_{k'}}^i)$ for $i \in N^\infty$ which implies,
using \eqref{ineq},
\[ |x_{\mu_{k}}^i| = 
x_{\mu_{k}}^i \xi_{\mu_{k}}^i 
= x_{\mu_{k'}}^i \xi_{\mu_{k}}^i 
\leq  |x_{\mu_{k'}}^i|, \qquad  n_0 \leq k\leq k', i \in N^\infty.\]
However this contradicts the condition that $x_{\mu_{k'}}^i \to 0$ for $i \in N^\infty$, hence
the proposition is verified. 
\end{proof}

We prove in the sequel a local monotonicity result.

\begin{proposition}\label{pp} 
Let $T$ satisfy the positive cone condition. 
Then, for any $\lambda$, there exists an $\eps>0$ such that 
\[ |x_\mu^i| \leq |x_\lambda^i| \qquad \forall i \in I(\lambda) , \forall \mu \in [\lambda-\eps,\lambda]\]

\end{proposition}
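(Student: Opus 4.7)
The plan is to combine the previous two lemmas (\ref{lem2} and \ref{nulllemma}) with the continuity of $\lambda \mapsto x_\lambda$ from Lemma~\ref{lem1}. The strategy is to pick $\eps>0$ small enough so that, on $[\lambda-\eps,\lambda]$, the hypotheses of Lemma~\ref{lem2} hold, namely $N(\lambda)\subset N(\mu)$ and $\xi_\mu^i = \xi_\lambda^i$ for every $i\in I(\lambda)$.

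First, I would invoke Lemma~\ref{nulllemma} to obtain $\eps_1 >0$ such that $N(\lambda)\subset N(\mu)$ whenever $\mu\in[\lambda-\eps_1,\lambda]$. Next, for each $i \in I(\lambda)$, we have $x_\lambda^i \neq 0$; since $I(\lambda)$ is a finite set and $\mu\mapsto x_\mu^i$ is continuous by Lemma~\ref{lem1}, there exists $\eps_2 >0$ such that for every $\mu \in [\lambda-\eps_2,\lambda]$ and every $i \in I(\lambda)$, the sign of $x_\mu^i$ coincides with that of $x_\lambda^i$, and in particular $x_\mu^i \neq 0$. By the optimality conditions \eqref{opc}, this forces $\xi_\mu^i = \sign(x_\mu^i) = \sign(x_\lambda^i) = \xi_\lambda^i$ for all such $i$. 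Setting $\eps := \min\set{\eps_1,\eps_2}$, both hypotheses of Lemma~\ref{lem2} are in force on $[\lambda-\eps,\lambda]$.

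Then an application of Lemma~\ref{lem2} yields
\[
x_\mu^i\, \sign(x_\lambda^i) \leq \abs{x_\lambda^i} \qquad \forall i \in I(\lambda),\,\forall \mu\in[\lambda-\eps,\lambda].
\]
Using that $\sign(x_\mu^i) = \sign(x_\lambda^i)$ on this interval, the left-hand side equals $\abs{x_\mu^i}$, which gives the required bound $\abs{x_\mu^i} \leq \abs{x_\lambda^i}$ for all $i \in I(\lambda)$.

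I do not expect a serious obstacle here: all the heavy lifting (the positive cone condition entering through diagonal dominance, the persistence of the zero pattern on small left-sided neighborhoods, and the continuity of $x_\lambda$) has already been done in the preceding lemmas. The only subtle point is that one must use a \emph{left} neighborhood $[\lambda-\eps,\lambda]$ rather than a full neighborhood, because Lemma~\ref{nulllemma} as stated only guarantees $N(\lambda)\subset N(\mu)$ for $\mu$ slightly below $\lambda$; this is consistent with the statement of Proposition~\ref{pp}, which also restricts to $\mu\in[\lambda-\eps,\lambda]$.
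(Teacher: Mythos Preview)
Your proposal is correct and follows essentially the same approach as the paper's proof: use continuity (Lemma~\ref{lem1}) to fix the sign pattern on $I(\lambda)$, use Lemma~\ref{nulllemma} to ensure $N(\lambda)\subset N(\mu)$, then apply Lemma~\ref{lem2} and convert $x_\mu^i\,\sign(x_\lambda^i)$ into $|x_\mu^i|$ via the equality of signs. The only cosmetic difference is that you explicitly split into $\eps_1,\eps_2$ and take the minimum, whereas the paper phrases this as ``$|\mu-\lambda|$ small enough'' and ``additionally $\mu$ close enough''.
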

\begin{proof}
Let $\mu <\lambda$. It follows by 
continuity that $\text{sign}(x_\lambda^i)=\xi_\lambda^i = \xi_\mu^i=\text{sign}(x_\mu^i)$ for $i \in I(\lambda)$ 
and $\abs{\mu-\lambda}$ small enough. 
Taking additionally $\mu$ close enough to 
$\lambda$ so that Lemma~\ref{nulllemma}
applies, we may use Lemma~\ref{lem2} 
to conclude the result
\[ |x_\mu^i| = 
x_\mu^i \text{sign}(x_\mu^i) = 
x_\mu^i \text{sign}(x_\lambda^i) \leq 
|x_\lambda^i|.
\]
\end{proof}

By continuity, we may globalize the result as follows.
 \begin{proposition}
 Let $T$ satisfy the positive cone condition. 
Then, for any $\lambda>0$, we have 
\[ |x_\mu^i| \leq |x_\lambda^i|  \qquad \forall i \in I(\lambda) , \forall \mu \leq \lambda.   \] 

\end{proposition}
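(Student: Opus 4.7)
The plan is to globalize the local bound in Proposition~\ref{pp} via a standard supremum/infimum argument driven by the continuity of $\mu \mapsto x_\mu$ (Lemma~\ref{lem1}), with one extra case distinction to handle the indices where the sign pattern collapses at an intermediate parameter.

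Fix $\lambda > 0$ and $i \in I(\lambda)$. Set $g(\mu) := |x_\mu^i|$ on $(0,\lambda]$, which is continuous by Lemma~\ref{lem1}, and note $g(\lambda) > 0$. Introduce
\[ S := \{ \mu \in (0,\lambda] \,:\, g(\nu) \le g(\lambda) \text{ for all } \nu \in [\mu,\lambda] \}. \]
Proposition~\ref{pp} guarantees that $S$ contains a left neighborhood of $\lambda$, so $\mu_0 := \inf S \in [0,\lambda)$ is well-defined; by continuity of $g$ together with the defining property of $S$, we also get $g(\nu) \le g(\lambda)$ for every $\nu \in (\mu_0,\lambda]$ and, passing to the limit, for $\nu = \mu_0$ as well (when $\mu_0>0$).

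I would then argue by contradiction that $\mu_0 = 0$. Suppose $\mu_0 > 0$. I split into two cases according to whether $i \in I(\mu_0)$.

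\textbf{Case 1: $g(\mu_0) > 0$.} Apply Proposition~\ref{pp} with $\lambda$ replaced by $\mu_0$ (which is legitimate since $i \in I(\mu_0)$). This supplies some $\eps > 0$ with $g(\mu) \le g(\mu_0) \le g(\lambda)$ for all $\mu \in [\mu_0 - \eps, \mu_0]$. Combined with $g(\nu) \le g(\lambda)$ on $[\mu_0,\lambda]$, we conclude $\mu_0 - \eps \in S$, contradicting the definition of $\mu_0$.

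\textbf{Case 2: $g(\mu_0) = 0$.} Here Proposition~\ref{pp} does not directly apply, and this is the main obstacle of the proof. However, we now have the strict inequality $g(\mu_0) = 0 < g(\lambda)$, so pure continuity suffices: there exists $\eps > 0$ with $g(\mu) < g(\lambda)$ for all $\mu \in [\mu_0 - \eps, \mu_0 + \eps] \cap (0,\lambda]$. Joining this with $g(\nu) \le g(\lambda)$ on $[\mu_0,\lambda]$ again gives $\mu_0 - \eps \in S$, a contradiction.

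Hence $\mu_0 = 0$, which means $g(\mu) = |x_\mu^i| \le |x_\lambda^i|$ for every $\mu \in (0,\lambda]$ and every $i \in I(\lambda)$, as claimed.
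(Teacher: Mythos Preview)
Your proof is correct and follows essentially the same globalization-by-continuity strategy as the paper: both leverage Lemma~\ref{lem1} and Proposition~\ref{pp} through an extremal-point argument. The only cosmetic difference is that the paper takes $\bar{\mu}$ as the supremum of parameters where the inequality \emph{fails}, which forces $|x_{\bar{\mu}}^i| = |x_\lambda^i| > 0$ and thereby sidesteps your Case~2 entirely; your choice of the infimum of the ``good'' set is equally valid but makes that extra case distinction necessary.
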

\begin{proof}
Suppose that the  statement does not hold. Then for some $i \in I(\lambda)$
there exists $\mu^*<\lambda$ with 
\[  |x_{\mu^*}^i| > |x_{\lambda}^i|. \]  
Take $\bar{\mu}$ as the supremum of all such $\mu^* < \lambda$.  It follows then by continuity 
that $|x_{\bar{\mu}}^i| =  |x_\lambda^i|$,
and there exists a sequence $(\mu_k)_k$ converging to $\bar{\mu}$ with   
$\mu_k< \bar{\mu}$ and
$|x_{\mu_k}^i| > |x_{\bar{\mu}}^i|$, 
which contradicts Proposition~\ref{pp}.
\end{proof} 
We note that monotonicity of components
was also proven by Meinshausen~\cite{Mein07}
under the slightly stronger restricted 
positive cone condition. The result
of the theorem has also been stated 
in \cite[Remark 3]{Duan11} though without 
full proof. 

We are now in a position to state the main result of this subsection, its proof being a consequence of the auxiliary results shown above. 

\begin{theorem}\label{th:ell1}
 Let $T\in\R^{m\times n}$ be injective and assume that $T$ satisfies the positive cone condition. 
 Then the  $\ell^1$-minimizers  of \eqref{problem} satisfy  
  \eqref{eq:condition_2} with $J=\|\cdot\|_{1}$ and with corresponding regularization parameters $\lambda_k$, for any $k\in\N_0$. In particular, in this case 
  the MHDM iteration agrees with the corresponding $\ell^1$-regularization, that is
  \[ x_n = x_{\lambda_n}, \forall n\in\N_0.\]

\end{theorem}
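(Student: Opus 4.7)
The plan is to assemble the pieces that have already been developed, since most of the analytical work has been packaged into the preceding auxiliary results. The key observation is that Remark~\ref{remark_monotone} reduces condition \eqref{eq:condition_2} to a componentwise monotonicity of $|x_{\lambda_k}^i|$ along the active set $I(\lambda_{k+1})$, and the globalized monotonicity proposition that immediately precedes Theorem~\ref{th:ell1} delivers exactly this monotonicity under the positive cone condition.

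First, I would observe that injectivity of $T$ makes the $\ell^1$-Tikhonov functional \eqref{problem} strictly convex in $x$ (the quadratic term is strictly convex whenever $T$ is injective), so the minimizer $x_{\lambda}$ is unique for every $\lambda>0$. This unique selection of Tikhonov minimizers is what will allow the comparison with the MHDM iterate $x_n$ to become an equality rather than merely a shared minimizer statement.

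Next, I would run an induction on $n$. For the base case, $x_0 = x_{\lambda_0}$ holds by the very definition of the MHDM initialization \eqref{eq:MHDM_initial_step}. Assume now that $x_k = x_{\lambda_k}$. Applying the globalized monotonicity proposition with $\mu=\lambda_k$ and $\lambda=\lambda_{k+1}$ (valid since $\lambda_k<\lambda_{k+1}$) yields
\[ |x_{\lambda_k}^i| \leq |x_{\lambda_{k+1}}^i| \qquad \forall\, i \in I(\lambda_{k+1}). \]
By Remark~\ref{remark_monotone}, this inequality implies the identity \eqref{eq:condition_2}. Invoking the lemma that equates \eqref{eq:condition_2} with \eqref{eq:cond_1}, we may then apply Theorem~\ref{thm:induction_step} to conclude that $x_{\lambda_{k+1}}$ minimizes the MHDM functional $\frac{\lambda_{k+1}}{2}\|Tx-f\|^2 + J(x-x_k)$. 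By the uniqueness of the MHDM minimizer (again from strict convexity of the quadratic part together with the fact that $J$ is a seminorm, or simply from uniqueness of the corresponding Tikhonov minimizer), this forces $x_{\lambda_{k+1}} = x_{k+1}$, closing the induction.

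I do not expect a genuine obstacle here, since the hard work was done in Lemmas~\ref{lem1}--\ref{nulllemma} and Propositions~\ref{pp} and its globalization. The only subtlety that deserves a short sentence in the write-up is why the MHDM minimizer is unique: this is because $x \mapsto \frac{\lambda_{k+1}}{2}\|Tx-f\|^2 + \|x-x_k\|_1$ is strictly convex under the injectivity of $T$, so ``$x_{\lambda_{k+1}}$ is \emph{a} minimizer of the MHDM functional'' immediately upgrades to ``$x_{\lambda_{k+1}} = x_{k+1}$''. With that remark in place, the proof of Theorem~\ref{th:ell1} is a short three-line induction.
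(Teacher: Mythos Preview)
Your proposal is correct and follows essentially the same route as the paper: apply the globalized monotonicity proposition with $\mu=\lambda_k$, $\lambda=\lambda_{k+1}$ to obtain \eqref{ineq4}, invoke Remark~\ref{remark_monotone} to deduce \eqref{eq:condition_2}, and then conclude via Theorem~\ref{thm:induction_step}. Your write-up is actually more careful than the paper's terse proof, since you make explicit both the induction and the uniqueness of minimizers (from injectivity of $T$) needed to upgrade ``$x_{\lambda_{k+1}}$ is \emph{a} minimizer'' to ``$x_{\lambda_{k+1}}=x_{k+1}$''.
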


\begin{proof}
    Proposition \ref{pp} applies for $\mu=\lambda_k$ and $\lambda=\lambda_{k+1}$ yielding 
 \eqref{ineq4} and thus, 
\eqref{eq:condition_2}.
\end{proof}

We finish with a denoising example where the MHDM does not agree with Tikhonov regularization. This will be due to a violation of 
\eqref{eq:condition_2}. 

\begin{example}
Let \begin{equation*}
    T = \begin{bmatrix*}[r]
    2 &1 \\1 &0
    \end{bmatrix*}
\end{equation*}
and $f = (4,-1)^T$. The corresponding Tikhonov minimizer \begin{equation*}
    (x_\lambda^1,x_\lambda^2) \in \argmin_{(x_1,x_2)\in \R^2} 
   \frac{\lambda}{2 } \norm{T(x_1,x_2)^T-f}_2^2 + \abs{(x_1,x_2)}_1
\end{equation*} is given by \begin{equation*}
    (x_\lambda^1, x_\lambda^2) = \begin{cases}
    (0,0) &\text{ for } 0 <\lambda\le \frac{1}{7},\\
     \pars{\frac{7}{5}-\frac{1}{5\lambda}, \ 0} &\text{ for } \frac{1}{7}<\lambda\le \frac{1}{2},\\
     \pars{\frac{1}{\lambda}-1, \  6-\frac{3}{\lambda}} & \text{ for } \frac{1}{2}\le \lambda \le 1,\\
     \pars{0, \ 4-\frac{1}{\lambda}} &\text{ for } 1\le \lambda\le 3,\\
     \pars{\frac{3}{\lambda}-1, \ 6-\frac{7}{\lambda}} &\text{ for }\lambda\ge 3.
    \end{cases} 
\end{equation*}
Note that for $\lambda\in \pars{\frac{1}{2},1}$ we have that $x_\lambda^1$ is positive and strictly decreasing to $0$, while $x_\lambda^2$ is positive and strictly increasing with respect to $\lambda$. If the sequence $(\lambda_k)_{k \in \N_0}$ is chosen such that  $\lambda_k, \lambda_{k+1} \in \pars{\frac{1}{2},1}$ and $x_{\lambda_{k+1}}^1 <x_{\lambda_k}^1 <2x_{\lambda_{k+1}}^1 $  hold for some $k \ge 0$, we obtain \begin{equation*}
    \norm{x_{\lambda_{k+1}}-x_{\lambda_k}}_1 + \norm{x_{\lambda_{k+1}}}_1 = \abs{x_{\lambda_k}^1} + 2\abs{x_{\lambda_{k+1}}^2} - \abs{x_{\lambda_k}^2}
\end{equation*} but \begin{equation*}
    \norm{2x_{\lambda_{k+1}} -x_{\lambda_k}}_1 = 2 \abs{x_{\lambda_{k+1}}^1} - \abs{x_{\lambda_k}^1} +2\abs{x_{\lambda_{k+1}}^2} - \abs{x_{\lambda_k}^2}.
\end{equation*}
Therefore, condition \eqref{eq:condition_2} is not satisfied and the MHDM does not agree with Tikhonov minimization. Furthermore, note that \begin{equation*}
    \inv{(T^*T)} = \begin{bmatrix*}[r]
    1 &-2\\-2&5
    \end{bmatrix*}
\end{equation*}
is not diagonally dominant, meaning that $T$ does not satisfy the positive cone condition.
\end{example}

\subsection{TV-denoising in one dimension} 
The results of the previous section can be used to 
analyze also the one-dimensional total variation (TV) denoising problem. The main idea is to first consider the finite dimensional problem. By the use of a substitution, we will transform TV-denoising into a problem of the form \eqref{problem}, which  satisfies the assumptions of Theorem \ref{th:ell1}. Note that a similar approach was used in \cite[Section~IV]{Duan11}.
The main point is that finite dimensional TV denoising is equivalent to TV denoising on the class of piecewise constant functions with jumps at predetermined points. This will allow using approximation arguments to obtain the infinite dimensional case.

In finite dimensional TV regularization, the penalty is 
essentially the $\ell^1$-norm of the derivative. 
We consider now the Tikhonov functional 
\begin{equation}\label{discTVden} \frac{\lambda}{2} \|x-y\|^2_2 + \|D x\|_{1} \end{equation}

where $y,x \in \R^n$, 
and we denote by $x_\lambda$ a minimizer of this functional with respect to $x$. We consider a finite-dimensional situation and 
$D x$ a standard difference quotient, that is, for  $x \in \R^n$, the matrix $D \in \R^{(n-1)\times n}$ has the form \begin{equation}\label{eq:Discrete_derivative}
 D = \begin{bmatrix*} -1 & 1 & 0 & \ldots & 0 \\ 
0 & -1 & 1 & \ldots & 0 \\ 
\ldots & \ldots & \ldots & \ldots & \ldots \\ 
0 & \ldots & 0 &  -1 & 1 
\end{bmatrix*}.
\end{equation}

Note that $D$ has as nullspace $N(D)$,  the subspace of constant vectors. Since we 
can decompose the $\R^n$-space into 
$N(D)\oplus N(D)^\bot$ orthogonally, it is 
not difficult to see that \eqref{discTVden}
can be replaced by  the corresponding optimization problem with $x \in N(D)^\bot$
and $y$ replaced by $P_{N(D)^\bot} y$, i.e., 
the orthogonal projection onto $N(D)^\bot$. 
(The component of $x$ in $N(D)$ is easily 
calculated as the orthogonal projection of $y$
to $N(D)$.) 
Thus, by now considering 
\eqref{discTVden} in $N(D)^\bot = R(D^*)$,
we may set $x = D^* w$ and minimize over
$w \in \R^{n-1}$. Upon setting $z = D D^*w$ and observing that 
$D D^*$ is invertible, we arrive at the function
\[ \frac{\lambda}{2} \|D^*(D D^*)^{-1} z-y\|^2 + \|z\|_{1} \]
to be minimized over $z$. This is the setup of the 
previous section with $T = D^*(D D^*)^{-1}$ and
\[ (T^*T)^{-1} = D D^* = 
\begin{bmatrix*} 2 & -1 & 0 & \ldots & 0 \\ 
-1 & 2 & -1 & \ldots & 0 \\ 
\ldots & \ldots & \ldots & \ldots & \ldots \\ 
0 & \ldots & 0 &  -1 & 2 
\end{bmatrix*},
\]
where the latter operator corresponds to the standard second-order difference quotient. Thus, it
satisfies the positive cone condition according to Lemma~\ref{lem0}. 
As a consequence, the MHDM agrees with 
Tikhonov regularization  in this case  by Theorem \ref{th:ell1}.  Note that this is 
not necessarily true for a general 1D regularization
of the form 
\[ \frac{\lambda}{2} \|T x-y\|^2_2+ \|D x\|_{1}. \]
with  a more general operator $T$.

By a limit argument, we can prove the same result in the continuous case in one dimension.
\begin{theorem}\label{thm:1d_agreeing}
Let  $y \in L^2([0,1])$ and consider the TV-denoising problem in one-dimension
on the interval $(0,1)$, 
i.e., \eqref{eq:Tikhonov_functional} with $T = Id$ and
$J = |.|_{TV}$. 
Then the MHDM iteration agrees with the corresponding 
Tikhonov regularization as in Theorem~\ref{thm:induction_step}
\end{theorem}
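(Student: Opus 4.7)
The plan is to reduce the continuous statement to the finite-dimensional result of the preceding subsection via a discretization and limit argument. First, I approximate the data by partitioning $[0,1]$ into $m$ equal subintervals and letting $y^{(m)} \in L^2([0,1])$ be the piecewise constant projection of $y$ onto the space $V_m$ of functions constant on each subinterval (for instance the cell average). Then $y^{(m)} \to y$ strongly in $L^2([0,1])$.

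Next, I invoke the structural property of 1D TV denoising with $T = \mathrm{Id}$: when the data is piecewise constant on the $m$-grid, the unique Tikhonov minimizer $x^{(m)}_\lambda$ also lies in $V_m$, with jumps restricted to the same grid points. This follows either from the taut-string characterization of 1D TV denoising or from direct inspection of the Euler--Lagrange conditions. Applied iteratively, the same property shows that every MHDM increment $u^{(m)}_k$ and every iterate $x^{(m)}_k$ remains in $V_m$, since at step $k$ the effective data $v^{(m)}_{k-1} = y^{(m)} - x^{(m)}_{k-1}$ is again piecewise constant on the $m$-grid.

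Once the problem lives in $V_m \cong \mathbb{R}^m$, the $L^2$ fidelity becomes (up to the factor $1/m$) the Euclidean norm on the vector of cell values, and the TV seminorm reduces to $\|Dx\|_1$ with $D$ as in \eqref{eq:Discrete_derivative}. The discussion following \eqref{discTVden} then applies verbatim: after the substitution $z = DD^*w$, the problem is in the form \eqref{problem} with $(T^*T)^{-1} = DD^*$ the tridiagonal second-order difference matrix, which is diagonally dominant, so by Lemma \ref{lem0} the positive cone condition holds. Theorem \ref{th:ell1} then yields $x^{(m)}_k = x^{(m)}_{\lambda_k}$ for every $k \in \mathbb{N}_0$ at the discrete level.

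Finally, I pass to the limit $m \to \infty$. Stability of the generalized Tikhonov minimizer with respect to $L^2$-perturbations of the data, using lower semicontinuity of $|\cdot|_{TV}$ together with coercivity and strict convexity of the $L^2$ fidelity, gives $x^{(m)}_{\lambda_k} \to x_{\lambda_k}$ in $L^2$ for each fixed $k$. For the MHDM side I argue by induction on $k$: assuming $x^{(m)}_k \to x_k$ in $L^2$, step $k+1$ is a Tikhonov problem with data $y^{(m)} - x^{(m)}_k \to y - x_k$, so the same stability gives $u^{(m)}_{k+1} \to u_{k+1}$ and hence $x^{(m)}_{k+1} \to x_{k+1}$. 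Combining the discrete equality with the two limits produces $x_k = x_{\lambda_k}$ for every $k$, which is the claim. The main obstacle will be the second step, namely a clean justification that piecewise constant data produces piecewise constant iterates on the same grid; once this structural lemma is in hand, the rest is bookkeeping that combines Theorem \ref{th:ell1} with standard convex-analytic perturbation estimates.
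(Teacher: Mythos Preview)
Your proposal is correct and follows essentially the same architecture as the paper: approximate $y$ by piecewise constant data on a uniform grid, observe that all Tikhonov/MHDM iterates then lie in the same finite-dimensional space (the paper cites \cite[Lemma~4.34]{Scherzerbook} for this, matching your taut-string/Euler--Lagrange remark), reduce to the discrete problem \eqref{discTVden}, apply Theorem~\ref{th:ell1}, and pass to the limit.

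The only noteworthy difference is in the limit step. You argue by induction that $x_k^{(m)} \to x_k$ in $L^2$ for the MHDM side (via stability of the Tikhonov step applied to the shifted data $y^{(m)} - x_k^{(m)}$), and separately $x_{\lambda_k}^{(m)} \to x_{\lambda_k}$, then conclude $x_k = x_{\lambda_k}$ directly. The paper instead never tracks the continuous MHDM iterates: it passes to the limit in the Bregman-distance condition \eqref{eq:cond_1}, using the convergence $|x_{\lambda_k}^N|_{TV} \to |x_{\lambda_k}|_{TV}$ and strong $L^2$-convergence of $\xi_{\lambda_k}^N$ together with weak lower semicontinuity of $|\cdot|_{TV}$ to force $D_J(x_{\lambda_{k+1}} - x_{\lambda_k}, x_{\lambda_{k+1}}) \le 0$. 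Your route is arguably cleaner and avoids the seminorm-convergence bookkeeping; the paper's route stays closer to the characterization in Theorem~\ref{thm:induction_step}. Both rely on the same stability ingredients from \cite{HoKaPoSc07} (in particular the Radon--Riesz step to upgrade weak to strong $L^2$-convergence), so you should cite these rather than leave the stability as a black box.
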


\begin{proof}
Due to standard density arguments in $L^2([0,1])$,   $y$ can be approximated by  piecewise constant functions $y_N$ such that 
$\|y-y_N\|_{L^2(0,1)} \to 0$ as $N\to\infty$. To specify the notation, the function $y_N$  is constant on the intervals
$(s_{i},s_{i+1})$, where  $(s_i)_{i=0}^N$ represent the nodes on a uniform grid of $[0,1]$.

According to \cite[Lemma 4.34]{Scherzerbook}, the corresponding minimizer of TV-denoising, 
now denoted by $x_{\lambda_k}^N$, is again piecewise constant on the same grid. One can see that the vectors of coefficients of  $x_{\lambda_k}^N$
on $(s_{i},s_{i+1})$ are the solutions 
of the discrete TV-denoising problems \eqref{discTVden} with corresponding vector of coefficients of $y_N$ as data $y$. 
Moreover, the expression $\|D x\|_{1}$ equals the TV-seminorm  $|x|_{TV}$ for all piecewise constant $x$. Note that the MHDM algorithm iteratively provides solutions of denoising problems, and the solutions to those are piecewise constant on the same grid. Consequently, we inductively obtain that applying the MHDM with data $y_N$ is equivalent to applying the MHDM to a discrete denoising problem with the coefficients of $y_N$ as data.
Hence, the MHDM agrees with Tikhonov regularization (i.e., TV-denoising) 
in this case when $y$ is replaced by $y_N$. We now verify \eqref{eq:cond_1}
by taking limits and using stability of the regularization scheme. For fixed parameter $\lambda_k$, 
$x_{\lambda_k}^N$ depends continuously of $y_N$ in a sense made precise in \cite{HoKaPoSc07};
see also \cite[Th. 2.4]{PoReSc10}.
In fact,  the following hold for $N \to \infty$ and for a subsequence (denoted the same as the original sequence):
\begin{alignat}{2} 
 |x_{\lambda_k}^N|_{TV} &\to |x_{\lambda_k}|_{TV}& \qquad  &\text{by \cite[Theorem 3.2]{HoKaPoSc07}}, \label{help1} \\ 
 x_{\lambda_k}^N & \rightharpoonup^*_{BV} x_{\lambda_k}&   &\text{by \cite[Theorem 3.2]{HoKaPoSc07}},\notag \label{help2} \\
 x_{\lambda_k}^N -y_N &\rightharpoonup_{L^2} x_{\lambda_k} - y& \qquad &\text{see \cite[Proof of Theorem 3.2]{HoKaPoSc07}}, \\ 
 \| x_{\lambda_k}^N -y_N &\| \to \|x_{\lambda_k} - y\|&  &\text{see \cite[Eq. (8)]{HoKaPoSc07}}. 
\end{alignat}
Since from any subsequence one can extract another subsequence with those convergence properties, they must hold for the sequence itself. The last two identities imply strong $L^2$-convergence of  $x_{\lambda_k}^N -y_N$ to  $x_{\lambda_k} -y$ 
by the Radon-Riesz property of $L^2$ (cf.~also \cite[Eq (12)]{PoReSc10}). Based on the convergence of $y_N$ to $y$ in $L^2([0,1])$, we can also conclude the convergence of $x_{\lambda_k}^N$ to $x_{\lambda_k}$. Since by definition it is $\xi_{\lambda_{k}} = y - x_{\lambda_k}$, 
this means that $\xi_{\lambda_{k}}^N \to \xi_{\lambda_k}$ strongly in $L^2([0,1])$, which yields
 
\begin{equation}\label{help3} \inner{\xi_{\lambda_{k}}^N}{x_{\lambda_{k}}^N} \to \inner{\xi_{\lambda_{k}}}{x_{\lambda_{k}}} \end{equation} 
for $N\to \infty$. Now consider \eqref{eq:cond_1} with $J = |.|_{TV}$, 
\begin{align*} 
D_J(x_{\lambda_{k+1}} -  x_{\lambda_{k}},x_{\lambda_{k+1}}) = 
J(x_{\lambda_{k+1}} -  x_{\lambda_{k}})  - J(x_{\lambda_{k}}) - \inner{\xi_{\lambda_{k}}}{-x_{\lambda_{k}}}. 
\end{align*}
Taking into account that $D_J(x_{\lambda_{k+1}}^N -  x_{\lambda_{k}}^N,x_{\lambda_{k+1}^N}) = 0$ by the results for discrete TV-denoising from above and using
the weak lower semicontinuity of $J$, one obtains 
\[ J(x_{\lambda_{k+1}} -  x_{\lambda_{k}})  \leq 
 \liminf\limits_{N\to \infty} J(x_{\lambda_{k+1}}^N -  x_{\lambda_{k}}^N) = 
  \liminf\limits_{N \to\infty} \left( J(x_{\lambda_{k}}^N) + \inner{\xi_{\lambda_{k}}^N}{-x_{\lambda_{k}}^N}\right) 
  =  J(x_{\lambda_{k}}) - \inner{\xi_{\lambda_{k}}}{x_{\lambda_{k}}},  
\]
where the last equality holds by \eqref{help1} and \eqref{help3}. 
Thus, the following (non-negative) Bregman distance satisfies
\[ D_J(x_{\lambda_{k+1}} -  x_{\lambda_{k}},x_{\lambda_{k+1}}) =  J(x_{\lambda_{k+1}} -  x_{\lambda_{k}})  -  J(x_{\lambda_{k}}) +  \inner{\xi_{\lambda_{k}}}{x_{\lambda_{k}}}  \leq 0, \] 
meaning that it must be $0$ and implying that \eqref{eq:cond_1} is
 verified.
\end{proof}

\subsection{TV-denoising in higher dimensions} 

The previous results  that the MHDM iteration 
agrees with Tikhonov regularization 
for denoising in the one-dimensional case cannot be 
extended to the higher-dimensional situation, not even in 
a discrete case. 
For TV-denoising on domains in $\R^2$ and $\R^3$ in a finite dimensional framework, the matrix
$D$ represents a discretization of the
gradient operator $D \sim \nabla$,   while
$D D^*$ is a discrete version of the operator 
$\nabla \div$, which does not necessarily satisfy the diagonal dominance condition of Lemma~\ref{lem0}. Thus, in general, the 
MHDM iteration does not agree with the Tikhonov regularization in higher dimensions. 
We can actually provide a counterexample to condition \eqref{eq:condition_2} in the two-dimensional case. 

\begin{example}\label{ex:520}
Let us consider the denoising problem in two dimensions
\eqref{eq:tikhonov} with $T $ being the identity.
Let $X$ be the space of $L^2(\R^2)$ functions $u$ 
with bounded (isotropic) TV-seminorm $J(u) < \infty$, where 
$J = |.|_{TV}$ on $\R^2$. We consider data $y$ given by 
the characteristic function of the unit square: $y = \chi_{[-1,1]^2}$. 
In this case, the explicit form of minimizers to \eqref{eq:tikhonov} 
are known, and they have as level sets “rounded” squares, i.e., a square whose 
edges are rounded by circular arcs; see, e.g., \cite{Alter05,Alter05II,Allard07}. 
Based on this, a useful explicit functional form has been stated in \cite[p.~1273]{Condat}, as described below. Namely,
 the minimizers of \eqref{eq:tikhonov} for $\lambda^{-1} < 1/(1 + \sqrt{\pi}/2)$ are defined as follows:
\begin{align} 
 x_\lambda(s,t)  :=
 \begin{cases} 1 -\lambda^{-1}(1 + \sqrt{\pi}/2) & r(s,t) \geq 1/(1 + \sqrt{\pi}/2) \\ 
  1 - \frac{\lambda^{-1}}{r(s,t)} &  \lambda^{-1} < r(s,t) < 1/(1 + \sqrt{\pi}/2)  \\
   0 & r \leq \lambda \text{ or }  (s,t) \not \in [0,1]^2,
 \end{cases}
\end{align}
where  $r(s,t):= 2 - |s|-|t| +\sqrt{2 (1-|s|)(1-|t|)}$. 
Of particular interest for us is the region 
\[ E_\lambda:= \{(s,t) \in (0,1)^2 \,|\, \lambda^{-1} < r(s,t) < 1/(1 + \sqrt{\pi}/2) \}, \]
which is bounded by two circular arcs and parts of the boundary of the unit square, where  the solution is smooth.   An illustration of $x_\lambda$ is given in Figure~\ref{fig:roundsquare}, where the graph of $x_\lambda$ at
$E_\lambda$ is marked in black. 
\begin{figure}
    \centering
    \includegraphics[width=8cm]{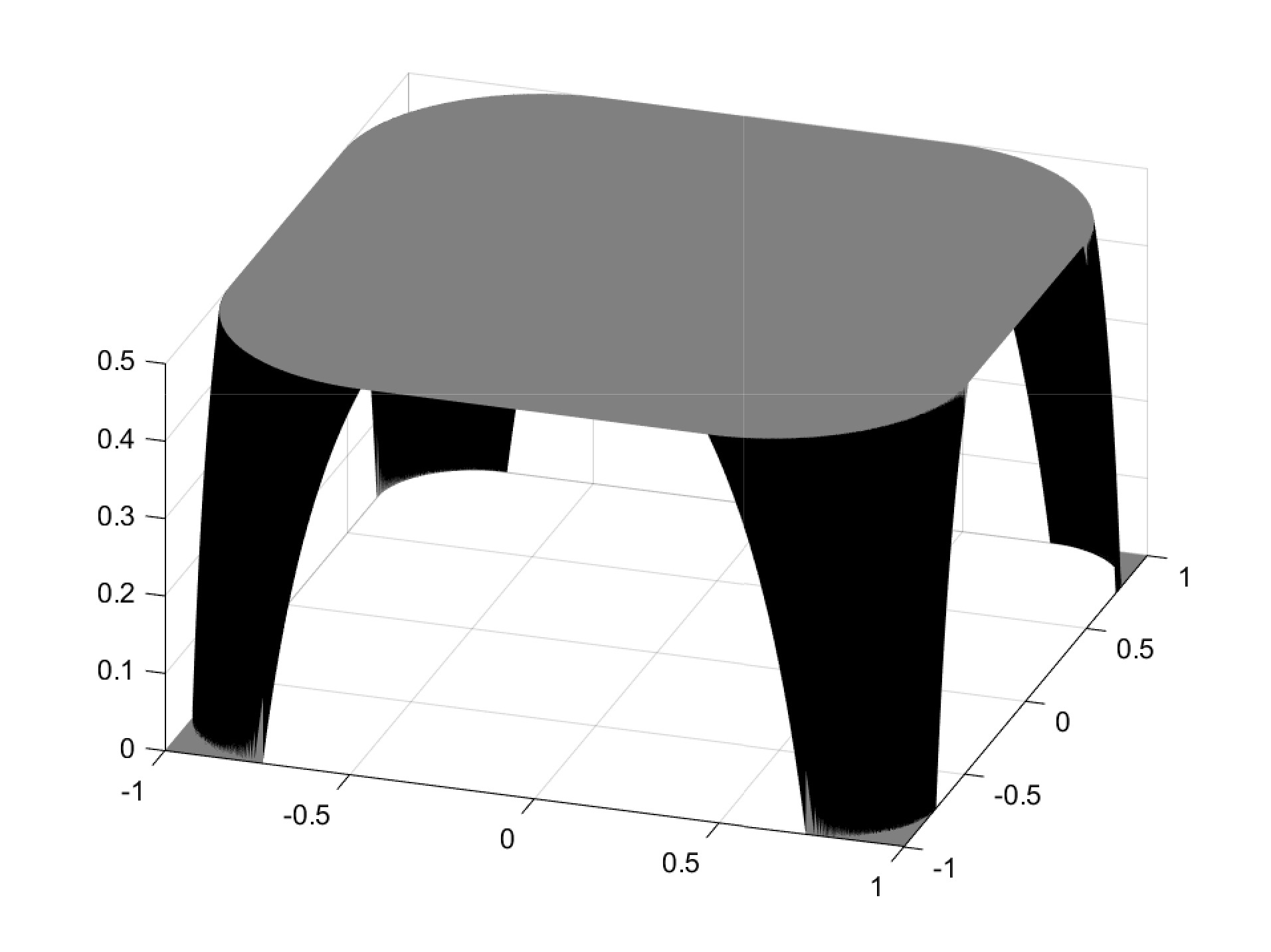}
    \caption{Illustration of 
    $x_\lambda$ in Example~\ref{ex:520}. The
    function at $ (s,t) \in E_\lambda$ is marked in black.
    }
    \label{fig:roundsquare}
\end{figure}

Now consider two solutions $x_{\lambda_{k+1}}$ and $x_{\lambda_k}$ with 
$\lambda_{k+1}> \lambda_{k} >  (1 + \sqrt{\pi}/2)$, and  take a fixed small ball $B_\eps$ with closure inside  the 
region $E_{\lambda_{k}}$, which is then also included in  $E_{\lambda_{k+1}}$.
It follows that 
\[ \nabla x_{\lambda_{k+1}} = -\lambda_{k+1}^{-1} \nabla \frac{1}{r(s,t)}
 \quad \mbox{and}\quad
 \nabla x_{\lambda_{k}} = -\lambda_{k}^{-1} \nabla \frac{1}{r(s,t)},  \qquad\mbox{for}\, (s,t) \in B_\eps.
\]
By smoothness, the $TV$-norm equals the $L^1$-norm of the gradient in $B_\eps$,  and it holds

\begin{align*} 
&\int_{B_\eps} |\nabla x_{\lambda_{k+1}} - \nabla x_{\lambda_{k}}| d(s,t) + 
\int_{B_\eps} |\nabla x_{\lambda_{k+1}}| d(s,t)  \\
& \qquad = 
|\lambda_{k+1}^{-1} -\lambda_k^{-1}| \int_{B_\eps}  \abs{\nabla \frac{1}{r(s,t)}} 
d(s,t)+ 
|\lambda_{k+1}^{-1}| \int_{B_\eps}  \abs{\nabla \frac{1}{r(s,t)}} d(s,t) \\
& \qquad = 
\left(|\lambda_{k+1}^{-1} -\lambda_k^{-1}| + |\lambda_{k+1}^{-1}|\right)\int_{B_\eps}  \abs{\nabla \frac{1}{r(s,t)}} d(s,t)  \\
&\int_{B_\eps} |2\nabla x_{\lambda_{k+1}} - \nabla x_{\lambda_{k}}| d(s,t)
= \left(|2\lambda_{k+1}^{-1} -\lambda_k^{-1}| \right)\int_{B_\eps}  \abs{\nabla \frac{1}{r(s,t)}} d(s,t).  
\end{align*} 
Since $\lambda_{k+1}^{-1} < \lambda_k^{-1}$, it follows 
that 
\[ \left(|\lambda_{k+1}^{-1} -\lambda_k^{-1}| + |\lambda_{k+1}^{-1}|\right)
= \lambda_k^{-1} 
>  |2\lambda_{k+1}^{-1} -\lambda_k^{-1}|,  \]
and thus the left-hand side in the above identity is strictly larger than the right one: 
\[ \int_{B_\eps} |\nabla x_{\lambda_{k+1}} - \nabla x_{\lambda_{k}}| d(s,t) + 
\int_{B_\eps} |\nabla x_{\lambda_{k+1}}| d(s,t)  > 
\int_{B_\eps} |2\nabla x_{\lambda_{k+1}} - \nabla x_{\lambda_{k}}| d(s,t). 
\]
As the solutions $x_{\lambda_{k+1}}$ and $x_{\lambda_k}$ are smooth in a neighborhood of $B_\eps$, we may 
decompose the $TV$-norm (cf.~\cite[Corollary~3.89]{Ambrosio00}) as 
\[ |x_{\lambda_{k+1}}|_{TV} =  \int_{B_\eps} |\nabla x_{\lambda_{k+1}}| d(s,t) + 
|x_{\lambda_{k+1}}|_{TV(\R^2\setminus B_\eps)}.
\]
One can proceed analogously for $x_{\lambda_{k}}$ and the combinations $x_{\lambda_{k+1}} - x_{\lambda_{k}}$ 
and $2 x_{\lambda_{k+1}} - x_{\lambda_{k}}$. Now considering 
\eqref{eq:condition_2}, it follows from the triangle inequality that 
\[| x_{\lambda_{k+1}} - x_{\lambda_{k}}|_{TV(\R^2\setminus B_\eps)}+ 
|x_{\lambda_{k+1}}|_{TV(\R^2\setminus B_\eps)}   \geq 
 |2\nabla x_{\lambda_{k+1}} - \nabla x_{\lambda_{k}}|_{TV(\R^2\setminus B_\eps)},  
\]
such that  we arrive at  
\[| x_{\lambda_{k+1}} - x_{\lambda_{k}}|_{TV}+ 
|x_{\lambda_{k+1}}|_{TV}   > 
 |2\nabla x_{\lambda_{k+1}} - \nabla x_{\lambda_{k}}|_{TV},  
\]
implying   that \eqref{eq:condition_2} does not hold. 
Consequently,  the MHDM iteration is not identical to Tikhonov regularization in this situation. 

Note however, that the 
set $E_\lambda$ that yields  a violation of \eqref{eq:condition_2} is rather narrow, such that the difference between the approximate solutions provided by  the two methods might be small. 
In fact, numerical experiments for this 
setup have only indicated a difference of less than 2\% (in the $L^2$-norm). 
\end{example}

On the other hand, for very special data $y$,
it is the case that two-dimensional TV-denoising agrees with the MHDM  iteration, namely when $y$ is the 
characteristic function of so-called calibrable sets, as explained below.

\begin{example}\label{ex:condition_TV_denoisng_geometric}
Let $\Omega \subset \R^2$ be bounded with Lipschitz boundary. We consider the case of total variation denoising, i.e. $X = L^2(\Omega)$, $J = \tv{\cdot}$ and $T = Id$. Let $C \subset \Omega$ be a convex set. We furthermore assume that $C$ has $C^{1,1}-$boundary and that the curvature $\kappa_{\partial C}$ of $C$ satisfies $\norm{\kappa_{\partial C}}_{L^\infty} \le q(C)$, where $q(C) = \frac{\tv{C}}{m(C)}$, with $m(C)$ being the Lebesgue measure of $C$. By Lemma 4 in \cite{TVflow}, this is equivalent to $C$ being convex and having the property that there is a an $\eps>0$, such that $C$ is the (possibly uncountable) union of balls with radius $\eps$. Define $f = b\, q(C)\chi_C$ for some $b \in \R$. By Theorem 4 and Proposition 7 in \cite{TVflow}, the minimizer  of the denoising problem is given by $x_\lambda = \sign(b)\max\set{b-\frac{1}{\lambda}),0}q(C)\chi_C$, that is 
\begin{equation*}
    x_{\lambda} = \begin{cases}
    (b-\frac{1}{\lambda})q(C)\chi_C & \text{ if }b>+\frac{1}{\lambda}\\
    (b+\frac{1}{\lambda})q(C)\chi_C & \text{ if }b<-\frac{1}{\lambda}\\
    0 & \text{ if } \abs{b} \le\frac{1}{\lambda}.
    \end{cases}
\end{equation*} 
Therefore, relation \eqref{eq:condition_2} holds by the same reasoning as in Example \ref{ex:verification_sparse_denoising}. 
\end{example}

\section[Examples and numerical results]{Examples and numerical results\footnote{The program code is available as ancillary file from the arXiv page of this paper.}}\label{sect:Numerical}
In this section, we provide examples of possible penalty functionals to be used for the MHDM.  We will focus on the comparison of the MHDM iteration to the generalized Tikhonov regularization. 

\subsection{One dimensional TV-regularization}
Let us first investigate the case of one-dimensional TV-regularization, which means considering the 
 functional \eqref{eq:Tikhonov_functional} on $L^2([0,1])$ with $J = \tv{\cdot}$.   We employ a discretization of the interval $[0,1]$ into $N = 100$ equidistant nodes and approximate the total variation via the discrete derivative operator, that is, $\tv{u}\approx \frac{1}{N}\norm{Du}_1$ with $D$ as in \eqref{eq:Discrete_derivative}. The ground truth is given as a piecewise constant signal 
$x^\dagger = \chi_{[0.3,0.5]} +\frac{1}{2}\chi_{[0.68,0.72]}$.

We deal with the cases of denoising, i.e., $T = Id$, and deblurring, where $T$ is a convolution operator of a centered Gaussian kernel with standard deviation $\sigma = 0.1$. The solutions for Tikhonov regularization and MHDM were computed using the primal dual algorithm of \cite{chambollepock}. A geometric progression of regularization parameters was used for MHDM: $\lambda_n = \lambda_0 10^n$ with $\lambda_0 =1$. To compare the two methods, the 
difference between the MHDM iterate  $x_n$ and the Tikhonov minimizer $x_{\lambda_n}$ at scale $\lambda_n$ was computed via $e_n = \norm{x_n-x_{\lambda_n}}_{L^2}$. The numerical results are presented in Table~\ref{tab:MHDMvsTIkhonov_1d-TV}.

\begin{table}[H]
    \centering
    \sisetup{round-mode=places,round-precision=4}
    \begin{tabular}{l|S|S}
    & \text{Denoising} & \text{Deblurring}  \\ 
    \text{k} & \text{ error $e_n$ Tik.-MHDM} & \text{ error $e_n$ Tik.-MHDM} \\ \hline \hline
        1 & 0.00102167013603803  & 0.00205171629785761  \\ \hline
        2 & 0.000515949307050504 & 0.0798600191290426  \\ \hline
        3 & 0.000490806048890083  & 0.115512594396305  \\ \hline
        5 & 0.000115238766956142 & 0.117920459233943 \\ \hline
        7 & 2.80324765348313e-06  & 0.111839192818755  \\ \hline
        11 & 2.82839879658074e-10  & 0.109341369607251  
    \end{tabular}
    \caption{\label{tab:MHDMvsTIkhonov_1d-TV} Comparison of MHDM iterates and Tikhonov minimizers for one-dimensional TV-regularization}
\end{table}
We note that the relative error in the denoising case is of magnitude of at most $10^{-3}$, while in the deblurring case it is of order $10^{-1}$ for $n\ge 3$. Up to numerical inaccuracies, this confirms the result from Theorem \ref{thm:1d_agreeing},  suggesting that Tikhonov regularization and the MHDM iteration disagree for the deconvolution problem. Furthermore, the numerical results indicate that  the MHDM iterates do not converge to the true solution $x^\dagger$ in the deblurring case. This is demonstrated in Figure \ref{fig:Deblurring} which displays $x_n$, as well as the one-step regularized solution with parameter $\lambda_n$ for $n = 4$ and $n = 11$.

\begin{figure}[H]
    \centering
    \includegraphics[scale = 0.5]{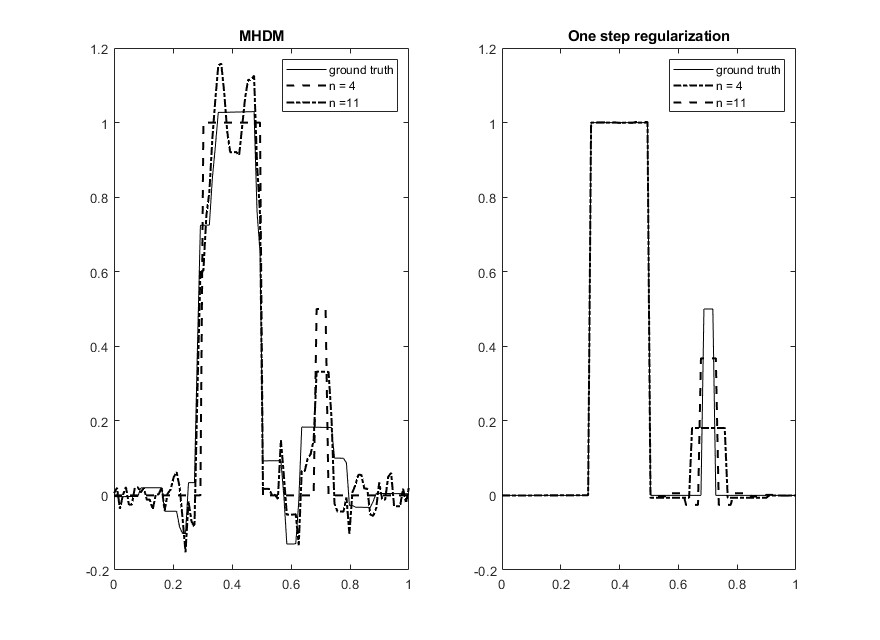}
    \caption{Comparison of different iterates of the MHDM and the corresponding one-step regularizers for TV-deblurring.
    }
    \label{fig:Deblurring}
\end{figure}

\subsection{\texorpdfstring{$\ell^p$}{TEXT}-regularization for \texorpdfstring{$p \in (0,1]$}{TEXT}}\label{ex_sparse}
In the regularization of sparsity constrained ill-posed problems, one often employs  Tikhonov regularization with $\ell^1$-penalty.  Another approach, even more sparsity promoting,  is to use the $\ell^p$-quasi-norms with $p \in (0,1)$ - see, e.g., \cite{grasmair2009, lorenz2008}.

In our experiments, we consider a sparse signal with peaks of different amplitudes. We apply a Gaussian convolution operator with standard distribution $0.025$  and add a normally distributed noise to create noisy data $f^\delta$ (cf.~Figure~\ref{fig:Truth_and_data}). We use the discrepancy principle for both the MHDM and the Tikhonov regularization. This means that we stop the iteration according to \eqref{eq:Discrepancy_principle} for the MHDM, while for solving the Tikhonov regularization problem we consider the same sequence of parameters and stop when the corresponding discrepancy principle condition  is satisfied. We choose $\tau  = 1.01$  in all experiments. 
\begin{figure}[H]
    \centering
  \includegraphics[scale = 0.55,trim = 1.5cm 0.9cm 1.5cm 0.5cm,clip]{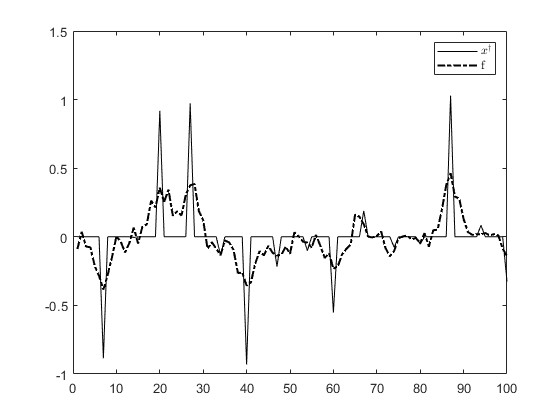}
    \caption{Ground truth $x^\dagger$ and observed data $f^\delta$.}
    \label{fig:Truth_and_data}
\end{figure}
In general, we do not expect the results of the MHDM to be significantly superior to those of the Tikhonov regularization. This can also be seen in Figures \ref{fig:MHDMvsTIkhonov_iterates} and \ref{fig:MHDMvsTIkhonov}.  Hence, we are more interested in how robust the algorithm is with regard to the involved parameters.
\begin{figure}[H]
    \centering
    \includegraphics[scale=0.3]{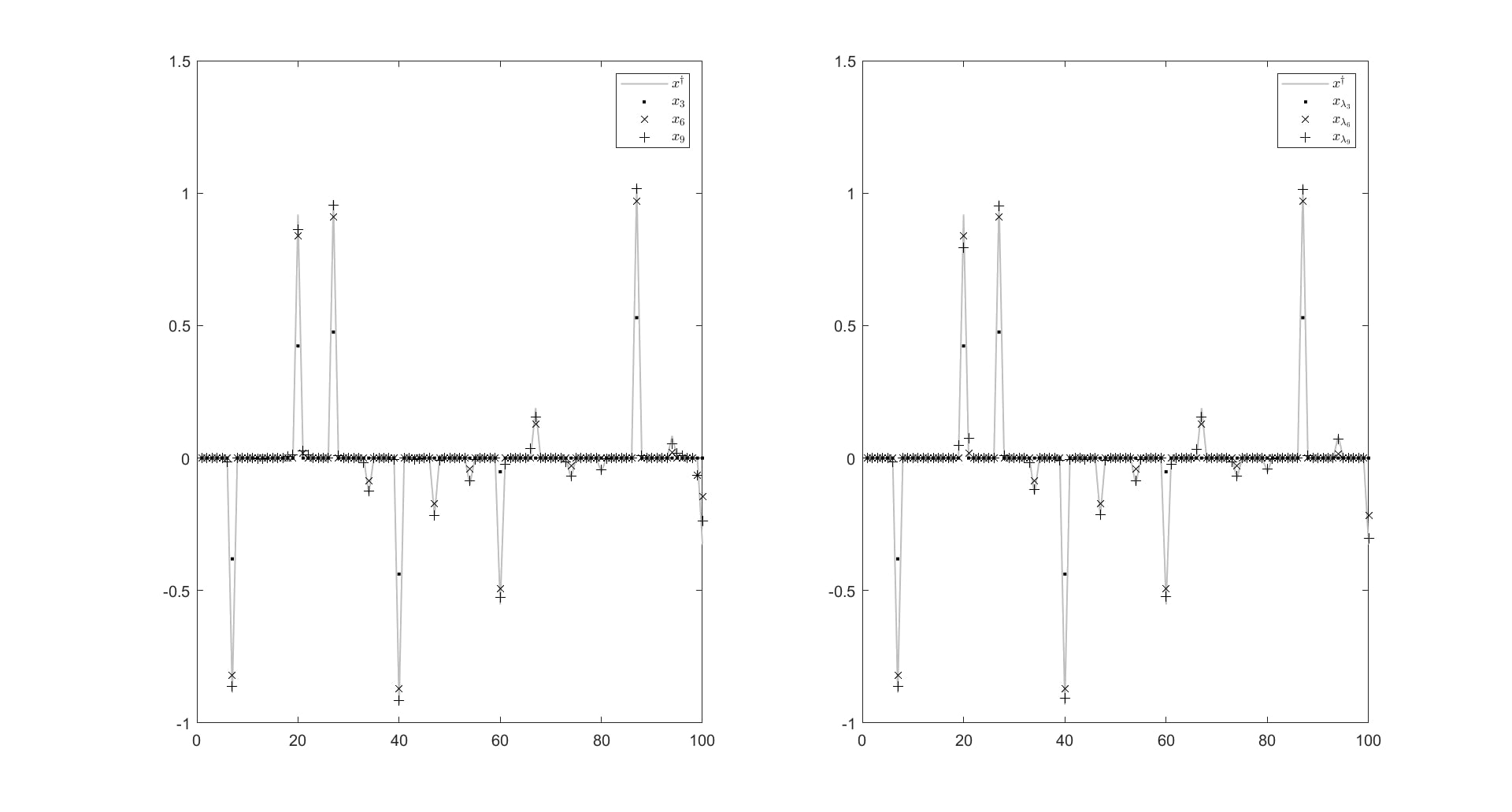}
    \caption{Iterates of the MHDM (left) and corresponding Tikhonov regularizers (right) with the parameters $\lambda_0 = 1$ and $\lambda_n = 2\lambda_{n-1}$.}
    \label{fig:MHDMvsTIkhonov_iterates}
\end{figure}

\begin{figure}[H]
    \centering
    \includegraphics[scale=0.3]{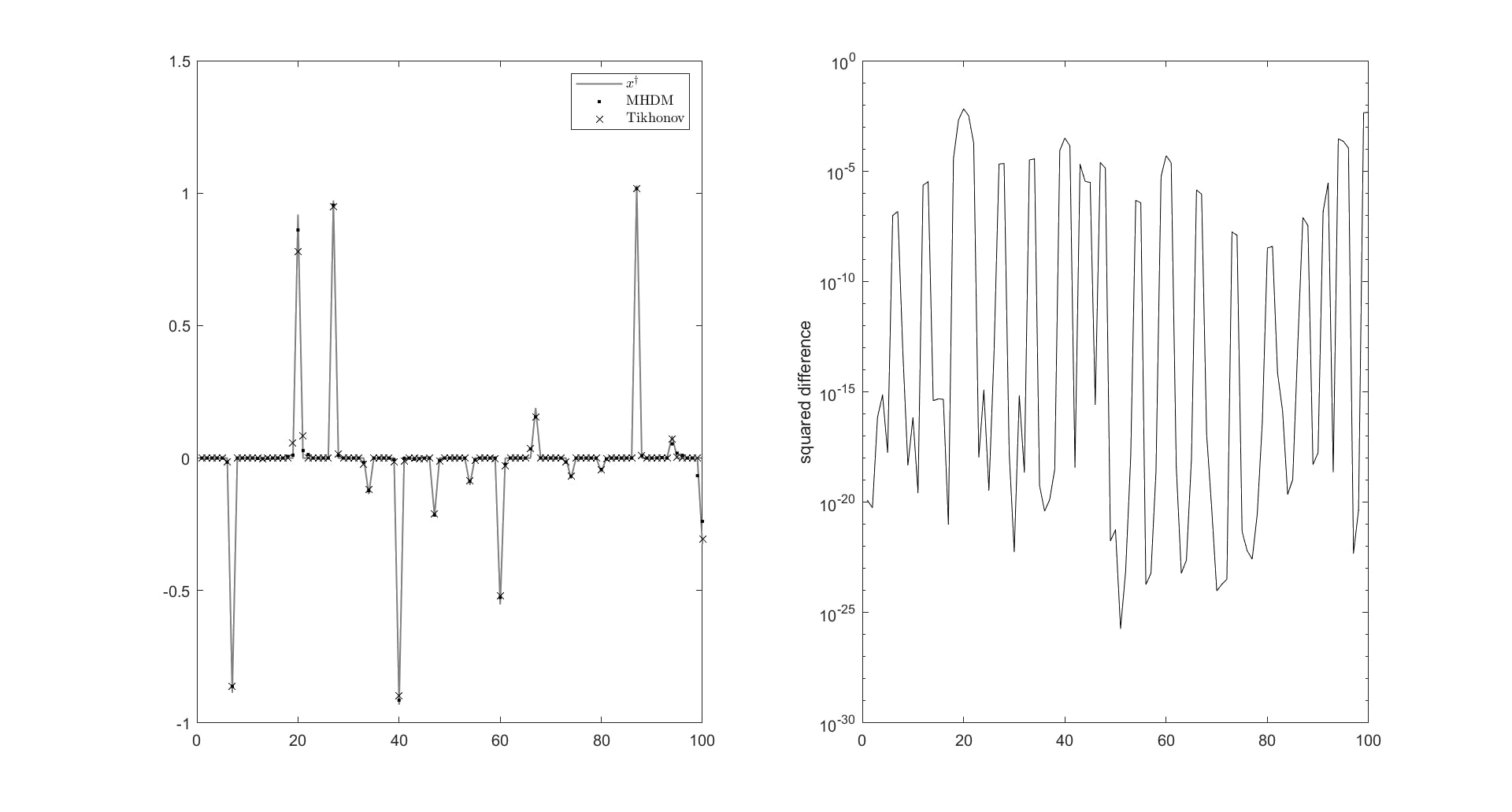}
    \caption{Comparison of the MHDM iterate and Tikhonov regularizer at the respective stopping index  (left) and componentwise squared error (right) with the parameters $\lambda_0 = 1$ and $\lambda_n = 2\lambda_{n-1}$.}
    \label{fig:MHDMvsTIkhonov}
\end{figure}

Let us start with the case of the $\ell^1$-penalty. All minimizers were computed using Nesterov's algorithm \cite{Nesterov1983AMF}. In Table \ref{tab:l1_different_noise} one can see the relative $\ell^2$-errors (that is $\norm{\tilde x -x^\dagger}_{\ell^2}\inv{\norm{x^\dagger}_{\ell^2}}$ if $\tilde x$ is an approximate solution obtained by either the MHDM or Tikhonov regularization) at the stopping index for different noise levels. For both  MHDM and Tikhonov regularization, we use a geometric progression $\lambda_n = 2^n \lambda_0 $ with $\lambda_0 = 1$.  Note that the Tikhonov regularization did not meet the discrepancy principle in the case of  the smallest noise level for any of the first $100$ tested parameters (see the $*$-entries in Table \ref{tab:l1_different_noise}).

\begin{table}[H]
    \centering
    \sisetup{table-column-width=12ex, round-mode=places,round-precision=4}
    \begin{tabular}{l||S|S|S}
         $\delta$ & 0.0051083329891876 & 0.050833298918755 & 0.508332989187551  \\ \hline \hline
        relative error MHDM & 0.058492868467940 & 0.069702351899454 & 0.445711040788777 \\ \hline
        relative error Tikhonov  & * & 0.088824779229680 & 0.604027245190495\\ \hline
        $n^*_\text{MHDM}$  & \sisetup{round-precision=0}\num{15} & \sisetup{round-precision=0}\num{9} & \sisetup{round-precision=0}\num{6}\\ \hline
        $n^*_\text{Tikhonov}$ & * & \sisetup{round-precision=0}\num{9} &\sisetup{round-precision=0}\num{6}  \\
                    \end{tabular}

    \caption{\label{tab:l1_different_noise} Relative errors under different levels of noise
    for the  $\ell^1$-penalty.}
\end{table}
While the number of minimizations used to meet the discrepancy principle is comparable, the MHDM performs slightly better than the Tikhonov regularization. Let us now investigate the stability of the algorithms with respect to the parameter choice. In Tables  \ref{tab:l1-different_lambda_0} and \ref{tab:l1_different_ratio} one can see the relative $\ell^2$-errors and stopping indices for varying initial values $\lambda_0$ and varying ratios for the geometric progression, respectively.

\begin{table}[H]
    \centering
    \sisetup{table-column-width=12ex, round-mode=places,round-precision=4}
    \begin{tabular}{l||S|S|S|S}
        $\lambda_0$ & \sisetup{round-precision=2}\num{0.01} & \sisetup{round-precision=1}\num{0.1} & \sisetup{round-precision=0}\num{1} & \sisetup{round-precision=0}\num{10} \\ \hline \hline 
        relative error MHDM & 0.052680985518303 & 0.072040027400777 & 0.069702351899454 & 0.053647413042756 \\ \hline
        relative error Tikhonov  & 0.098090439247152 & 0.080257469135223 & 0.088824779229680 & 0.097267029855473 \\ \hline
        $n^*_\text{MHDM}$   & \sisetup{round-precision=0}\num{16} & \sisetup{round-precision=0}\num{13} &\sisetup{round-precision=0}\num{9}  &\sisetup{round-precision=0}\num{6}  \\ \hline
      $n^*_\text{Tikhonov}$ & \sisetup{round-precision=0}\num{16} & \sisetup{round-precision=0}\num{12} & \sisetup{round-precision=0}\num{9} & \sisetup{round-precision=0}\num{6} \\

    \end{tabular}
    \caption{\label{tab:l1-different_lambda_0}Relative errors for different initial guesses $\lambda_0$ with $\lambda_n = 2^n\lambda_0$ and $\delta =0.0508$ for the  $\ell^1$-penalty.}
\end{table}

\begin{table}[H]
    \centering
    \sisetup{table-column-width=12ex, round-mode=places,round-precision=4}
    \begin{tabular}{l||S|S|S|S}

        ratio & \sisetup{round-precision=1}\num{1.2} & \sisetup{round-precision=0}\num{2} & \sisetup{round-precision=0}\num{3} & \sisetup{round-precision=0}\num{10} \\ \hline \hline
        relative error MHDM & 0.071740448268390 & 0.069702351899454 & 0.047284619801456 & 0.059375207394123 \\ \hline
        relative error Tikhonov  & 0.078881916529666 & 0.080257469135223 & 0.088824779229680 & 0.124386611653481 \\ \hline
        $n^*_\text{MHDM}$   & \sisetup{round-precision=0}\num{31} & \sisetup{round-precision=0}\num{9} & \sisetup{round-precision=0}\num{6} & \sisetup{round-precision=0}\num{4} \\ \hline
        $n^*_\text{Tikhonov}$ & \sisetup{round-precision=0}\num{30} & \sisetup{round-precision=0}\num{9} & \sisetup{round-precision=0}\num{6} & \sisetup{round-precision=0}\num{4} \\
        
    \end{tabular}
    \caption{\label{tab:l1_different_ratio}Relative errors for different geometric progressions with initial guess $\lambda_0 = 1$ and $\delta= 0.0508$ for the $\ell^1$-penalty.}
\end{table}

We observe that, in case $\lambda_n = 2^n \lambda_0$, the choice of the initial guess $\lambda_0$ is not   too important. However, varying the ratio $\frac{\lambda_{n+1}}{\lambda_n}$ leads to quite different results for Tikhonov regularization, while the MHDM behavior does not change significantly. Therefore, we argue that in the case of $\ell^1$-deblurring, the MHDM is a rather robust method, which in average seems to outperform Tikhonov regularization.

Let us now turn to $\ell^p$-regularization for $p \in (0,1)$, that is, consider the functional \begin{equation}\label{eq:functional_lp}
    J_p(x) = \sum_{i = 0}^ \infty\abs{x_i}^p.
\end{equation}
In order to compute minimizers of the generalized Tikhonov functional with this penalty term, we use the algorithm introduced in \cite{ghilli2019monotone}, whose Theorem 1 also ensures the well-definedness of the single step regularization and of the MHDM. Note that we may apply the same stopping rule \eqref{eq:Discrepancy_principle}, since the assumptions of part (i) in Theorem \ref{thm:Convergence_results} are satisfied with $C= 1$. Indeed, for any $p\in (0,1)$ and $x,y\ge 0$,  one has \begin{equation*}
    \abs{x-y}^p \le \abs{x}^p + \abs{y}^p.
\end{equation*} We first compare the  Tikhonov method with the MHDM for  $\lambda_n = \lambda_0 2^n$ with $\lambda_0 = 0.01$ and noise level $\delta= 0.0508$, while  allowing different values for  $p$. Furthermore, we  consider a version of the flexible  MHDM \eqref{step flexible} employing  $J_n$ as in \eqref{eq:functional_lp}  with a variable $p_n$ instead of a fixed $p$ in each iteration, namely for an increasing sequence $p_n = 0.95-\frac{0.9}{n+1}$ and then for a decreasing sequence $p_n = 0.05 + \frac{0.9}{n+1}$.  The results of both experiments can be found in Tables \ref{tab:different_p} and \ref{tab:varying_p}

\begin{table}[H]
    \begin{center}
    \sisetup{table-column-width=9.5ex, round-mode=places,round-precision=4}
    \begin{tabular}{l||S|S|S|S|S|S}
        $p$ & \sisetup{round-precision=3}\num{0.995} & \sisetup{round-precision=1}\num{0.9} & \sisetup{round-precision=2}\num{0.75} & \sisetup{round-precision=1}\num{0.5} & \sisetup{round-precision=2}\num{0.25} & \sisetup{round-precision=2}\num{0.05} \\ \hline\hline
        relative error MHDM & 0.051164595389368 & 0.049193565232175 & 0.039524098404196 & 0.059414025407425 & 0.033106651955753 & 0.049441591949795 \\ \hline
        relative error Tikhonov & 0.089019478446407& 0.021051640596944 & 0.013158632007887 & 0.010846496341248 & 0.010856320616812 & 0.010802704710041 \\ \hline
        $n^*_\text{MHDM}$ &\sisetup{round-precision=0}\num{16}  &\sisetup{round-precision=0}\num{16}  &\sisetup{round-precision=0}\num{16}  &\sisetup{round-precision=0}\num{19}  &\sisetup{round-precision=0}\num{20}  & \sisetup{round-precision=0}\num{16} \\ \hline
        $n^*_\text{Tikhonov}$ &\sisetup{round-precision=0}\num{16}   & \sisetup{round-precision=0}\num{16}  &\sisetup{round-precision=0}\num{17}   &\sisetup{round-precision=0}\num{17}   &\sisetup{round-precision=0}\num{17}   &\sisetup{round-precision=0}\num{18}   \\

    \end{tabular}
    \caption{Relative errors for different choices of $p$.}
    \label{tab:different_p}
    \end{center} 
\end{table}

\begin{table}[H]
    \centering
    \begin{tabular}{l||l|l}
        
                 &  $p_n$ increasing &$p_n$ decreasing\\ \hline \hline
         relative error& 0.0489 & 0.0533\\\hline
         $n^*$ & 15& 18\\ \hline
         $p_{n^*}$ & 0.1500 &0.8643
    \end{tabular}
    \caption{Relative errors for MHDM with varying penalty terms $J_{p_n}$.}
    \label{tab:varying_p}
\end{table}
Once again, the number of minimizations until the discrepancy principle is satisfied is very similar for both MHDM and  Tikhonov regularization. For $p$ close to $1$, the MHDM seems to produce slightly better results, while for smaller values of $p$ Tikhonov regularization seems to be superior. Those results achieved by Tikhonov regularization are also the overall best ones. The more general approach with functionals $J_{p_n}$ did not show very different results from the approach with fixed exponent. Nevertheless, a version with adaptive penalty terms would be an interesting concept for further research.
For variations of $\lambda_0$ and of the ratio $\frac{\lambda_n}{\lambda_{n+1}}$ defining the parameters $\lambda_n$ in the case of fixed exponent $p$, we observe that the MHDM performs again very similarly. The Tikhonov regularization performs more stable than in the $\ell^1$ case, though  it is outperformed by the MHDM for large ratios $\pars{\frac{\lambda_{n+1}}{\lambda_n}\approx 60}$. We expect that for ill-posed problems with higher degree of ill-posedness than the one we considered, the outperformance will occur for smaller ratios. Thus, we conclude that by applying the discrepancy principle, both methods seem to produce comparable reconstructions of the true data, but the MHDM is less sensitive to parameter choices.

\section{Conclusion}
We analyze the Multiscale Hierarchical Decomposition Method (MHDM) involving various convex and nonconvex penalties in a general function space framework and provide sufficient conditions for the convergence of the residual. We also provide a counterexample for which the residual does not converge, while the sufficient conditions are not satisfied either. Then, we  extend the MHDM to adaptive regularization functionals, showing an interesting multiscale norm decomposition of the data. This applies in particular to the  Bregman iteration method, thus leading to a new result in this respect. Furthermore, we propose a characterization for the generalized Tikhonov regularization at a given scale to agree with the MHDM. We provide a sufficient condition for the agreement in finite dimensional $\ell^1$-regularization and use it to prove that the MHDM and Tikhonov regularization are identical for $1$-dimensional TV-denoising. Moreover, we test the MHDM for sparsity constrained deconvolution problems and find it to be stable with regard to the involved parameters. Conditions for the convergence of the MHDM iterates, as well as convergence rates remain open questions. 

\section{Acknowledgments}
The authors are grateful to Daria Ghilli (University of Pavia) for providing an initial version of a code for nonconvex sparsity regularization. E. Resmerita and T. Wolf are supported by the Austrian Science Fund (FWF): DOC 78. The constructive remarks of the referees are appreciated, as they led to improving the presentation of the manuscript.

\printbibliography

\end{document}